\newtheorem*{theorem}{Main Theorem}
\newtheorem*{proposition*}{Proposition}
\newtheorem{proposition}{Proposition}[subsection]
\newtheorem{lemma}[proposition]{Lemma}
\theoremstyle{definition}
\newtheorem{definition}[proposition]{Definition}
\newtheorem{remark}[proposition]{Remark}
\newtheorem{example}[proposition]{Example}
\newtheorem*{notation}{Notation}
\newtheorem*{acknowledgment}{Acknowledgments}
\numberwithin{equation}{subsection}
\DeclareMathOperator{\bdry}{\mathrm{Bdry}}
\DeclareMathOperator{\dist}{\mathrm{dist}}
\DeclareMathOperator{\diag}{\mathrm{diag}}
\DeclareMathOperator{\derv}{\mathrm{d}\!}
\DeclareMathOperator{\divg}{\mathrm{div}}
\DeclareMathOperator{\tng}{\mathrm{T}}
\newcommand{\crci}{\ensuremath{\text{\textcircled{\raisebox{-0.2ex}{\textbf{1}}}}}}
\newcommand{\crcii}{\ensuremath{\text{\textcircled{\raisebox{-0.3ex}{\textbf{2}}}}}}
\newcommand{\crciii}{\ensuremath{\text{\textcircled{\raisebox{-0.2ex}{\textbf{3}}}}}}
\newcommand{\crciv}{\ensuremath{\text{\textcircled{\raisebox{-0.2ex}{\textbf{4}}}}}}
\newcommand{\crcv}{\ensuremath{\text{\textcircled{\raisebox{-0.2ex}{\textbf{5}}}}}}
\newcommand{\crcvi}{\ensuremath{\text{\textcircled{\raisebox{-0.2ex}{\textbf{6}}}}}}
\newcommand{\crcvii}{\ensuremath{\text{\textcircled{\raisebox{-0.2ex}{\textbf{7}}}}}}
\newcommand{\crcviii}{\ensuremath{\text{\textcircled{\raisebox{-0.2ex}{\textbf{8}}}}}}
\begin{document}

% Article information
\title{Hypersurfaces with central convex cross-sections}
\date{\today}

%Author information
\author{Metin Alper Gur}

%Address
\address{Department of Mathematics\\
	      Indiana University\\
	      Bloomington, IN 47405}

% AMS information
\subjclass[2010]{52A20, 53A07, 53A15}
\keywords{Quadric hypersurfaces, ovaloids, central symmetry}

\begin{abstract}
	A hypersurface $M \subseteq \mathbb{R}^{n}$, $n \geq 4$, has the central ovaloid property, or \emph{cop}, if
	\begin{itemize}
		\item $M$ meets some hyperplane transversally along an ovaloid, and
		\item Every such ovaloid on $M$ has central symmetry.
	\end{itemize}
	Generalizing work of  B. Solomon to higher dimensions, we show that a complete, connected, smooth hypersurface with 		\emph{cop} must either be a cylinder over a central ovaloid, 	or else quadric.
\end{abstract}

\maketitle

\section{Introduction and Overview}

\subsection{Introduction and Main Theorem}

A central set in a euclidean space has symmetry with respect to reflection through a point, called its center. A closed embedded smooth hypersurface of $\mathbb{R}^{n}$ is called an ovaloid if all its principal curvatures, with respect to the outer unit normal, are positive everywhere. An ovaloid of dimension one is also called an oval.

\begin{figure}[h]
	\centering
	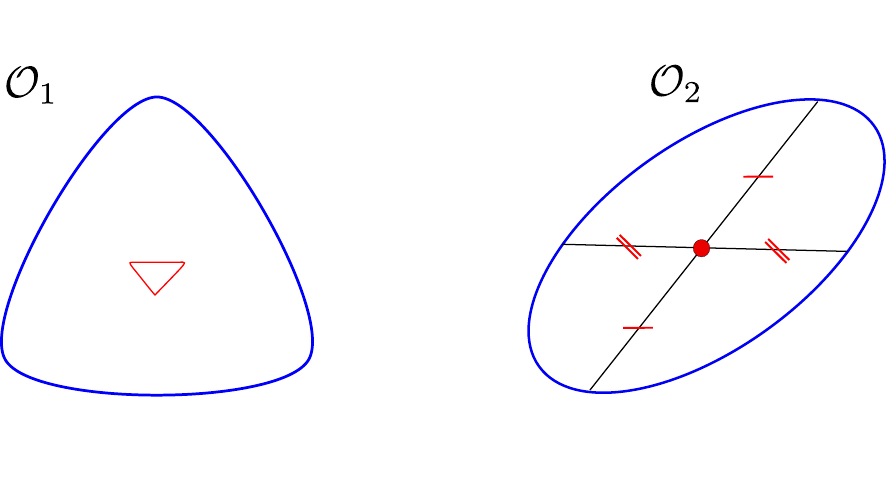
	\caption[Centrices of ovals]{Ovals $\mathcal{O}_{1}$, $\mathcal{O}_{2}$, and their centrices. Only $\mathcal{O}_{2}$ 						is central}
\end{figure}

The compact transverse cross-sections of a cylinder over a central ovaloid in $\mathbb{R}^{n}$, $n \geq 3$, with hyperplanes are central ovaloids. A similar result holds also for quadrics, which are the level sets of quadratic polynomials in $\mathbb{R}^{n}$, $n \geq 3$. Their compact transverse cross-sections with hyperplanes are ellipsoids, which are central ovaloids.

Following Solomon who showed that these two kinds of examples provide the only complete smooth hypersurfaces in $\mathbb{R}^{3}$, whose ovaloid cross-sections are central, we show that the same conclusion also holds in $\mathbb{R}^{n}$, $n \geq 4$. Roughly, we will say that a smooth hypersurface $M^{n-1} \subseteq \mathbb{R}^{n}$, $n \geq 4$, has the \emph{central ovaloid property}, or \emph{cop}, if
\begin{itemize}
	\item $M$ intersects at least one hyperplane transversally along an ovaloid, and
	\item Every such ovaloid is central.
\end{itemize}

Given this set-theoretic definition of \emph{cop}, our main result can be stated as follows:

{\em A complete, connected smooth hypersurface $M^{n-1}$ in $\mathbb{R}^{n}$, $n \geq 4$, with \emph{cop} is either a 	       cylinder over a central ovaloid, or a quadric.}

More precisely, the \emph{central ovaloid property} and our main theorem can be stated in terms of mappings as follows:

\begin{definition}[cross-cut] \label{DS1.1:ccut}
	If $M^{n-1}$ is a smooth manifold, $F \colon M^{n-1} \to \mathbb{R}^{n}$, $n \geq 4$, is an immersion, and $H 			\subseteq \mathbb{R}^{n}$ is a hyperplane transversal to $F$, a cross-cut of $F$ relative to $H$ is a compact connected 		component $\Gamma \subseteq F^{-1}(H)$.
\end{definition}

\begin{definition}[\emph{cop}] \label{DS1.1:cop}
	Let $M^{n-1}$ be a smooth manifold, and $F \colon M \to \mathbb{R}^{n}$, be an immersion, $n \geq 4$, then $F$  is 		said to have the \emph{central ovaloid property}, or, \emph{cop}, if
	\begin{itemize}
		\item There exists at least one cross-cut whose image is an ovaloid, and
		\item Every such ovaloid is central.
	\end{itemize}
\end{definition}

Given this precise terminology, the main theorem can be stated as follows:

\begin{theorem}
	Let $M^{n-1}$ be a smooth, connected manifold, and $F \colon M \to \mathbb{R}^{n}$, $n \geq 4$, be a proper, 			complete immersion with \emph{cop}, then $F(M)$ is either a cylinder over a central ovaloid or a quadric.
\end{theorem}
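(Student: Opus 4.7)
My plan is to generalize Solomon's three-dimensional argument by exploiting the openness of the \emph{cop} hypothesis in the space of hyperplanes and extracting rigidity from the resulting smooth family of central ovaloid cross-cuts.

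First, I would observe that the set $\mathcal{U}$ of hyperplanes transverse to $F$ admitting an ovaloid cross-cut is open in the affine Grassmannian. Positivity of the principal curvatures on a compact cross-cut is an open condition, and properness of $F$ keeps the chosen component of $F^{-1}(H)$ compact under small perturbations. Hence from the one hyperplane granted by \emph{cop} one obtains a smooth family $\{\Gamma_H\}_{H \in \mathcal{U}}$ of ovaloid cross-cuts, each with a well-defined center $c(H) \in H$ and an involution $\sigma_H \colon \Gamma_H \to \Gamma_H$ satisfying $F(\sigma_H(p)) + F(p) = 2c(H)$. The center map $H \mapsto c(H)$ is then smooth.

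Next, I would differentiate this symmetry identity. Parametrizing a neighborhood of a fixed $H_0 \in \mathcal{U}$ by the affine functional defining $H$, I would compute derivatives of $F(\sigma_H(p)) + F(p) = 2c(H)$ with respect to translations and tilts of the hyperplane. Because $F(\Gamma_H) \subset H$, and because the shape operator of the ovaloid $F(\Gamma_H) \subset H$ is preserved at antipodal pairs by the point reflection $\sigma_H$, these derivatives produce, at $p$ and $\sigma_H(p)$, linear relations between the derivatives of $c$, the position $F(p)$, and the unit normal $N(p)$ of $F$. The consistency of these relations as $H$ ranges over $\mathcal{U}$ assembles into a system whose solutions split into two geometric regimes. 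In the first, the image of $c$ locally lies on a single line $L$, and the accumulated equations force $F(M)$ to be locally invariant under translation along $L$, so $F(M)$ is locally a cylinder over a central ovaloid. In the second, the derivatives of $c$ span at least two independent directions, the system has full rank, and its integration produces a polynomial identity of degree at most two satisfied by $F$ on the local patch, so $F(M)$ locally lies in a quadric. Connectedness of $M$, together with the fact that both alternatives are described by closed conditions whose validity sets are also open by the local analysis, extends the chosen local alternative to all of $M$; completeness then ensures that $F(M)$ is a full cylinder or a full quadric rather than a proper piece of one.

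The main obstacle will be converting the set-theoretic symmetry of each cross-cut into a rigid polynomial equation for $F$. One must show that the derived system has sufficient rank in the non-cylindrical regime to determine a nondegenerate quadratic form; this is where the hypothesis $n \geq 4$ supplies the extra tilting directions absent in Solomon's three-dimensional setting, letting me pin down all coefficients of a quadratic form from the varying centers. One must also check that the quadric coefficients produced on one local patch match those produced on an overlapping patch, and rule out the scenario in which $F(M)$ is locally cylindrical near one cross-cut and locally a genuinely non-cylindrical quadric near another — beyond the legitimate overlap where a quadric happens to be a cylinder. Handling this interface carefully, and ensuring that the passage from infinitesimal symmetry to the global classification survives at points where the rank of the derivative of $c$ might change, is where I expect the most delicate work to lie.
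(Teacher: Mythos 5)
Your opening moves match the paper's: the openness of the ovaloid condition, the smooth family of central cross-cuts obtained by tilting the hyperplane, and the differentiation of the central-symmetry identity in the tilt parameter (the paper's ``symmetry obstruction,'' the $\epsilon$-derivative of the centrix of the tilted cross-section). But the heart of your argument --- ``the system \ldots splits into two geometric regimes'' and ``its integration produces a polynomial identity of degree at most two satisfied by $F$'' --- is asserted, not derived, and this is precisely where the theorem lives. Differentiating the symmetry identity once in $\epsilon$ does not hand you a quadratic polynomial; it hands you a first-order-in-$z$ system of PDEs on $\mathbb{S}^{n-2}\times I$ for the transverse support function $h(u,z)$ of the cross-sections. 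Extracting the dichotomy from that system requires three separate nontrivial inputs that your proposal does not supply: (i) a strong-maximum-principle/Harnack argument applied to the strictly elliptic operator $\Delta_{\mathbb{S}^{n-2}}+(n-2-q^{2})$ to prove the splitting $h(u,z)=r(z)h_{0}(u)$ (without this, the ``rank'' of your linear relations tells you nothing about the shape of the cross-sections); (ii) a characterization of which $h_{0}$ can occur, namely that $\nabla_{\mathbb{S}^{n-2}}\bigl[\Delta_{\mathbb{S}^{n-2}}h_{0}^{2}+2(n-1)h_{0}^{2}\bigr]\equiv 0$ forces $h_{0}$ to be the support function of an origin-centered ellipsoid (via degree-$2$ spherical harmonics); and (iii) a separate argument (the Axis lemma) that the curve of centers is affine, which does not follow from the cross-sectional analysis alone. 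Even after all that, the paper does not integrate to a quadric directly: it reduces to a hypersurface of revolution and invokes Solomon's earlier classification of rotationally symmetric hypersurfaces with central cross-sections. Your proposal contains no substitute for any of these steps, so the claimed passage from the infinitesimal relations to the quadratic identity is a genuine gap, not a deferred computation.

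A secondary but real gap is in your globalization. The open-and-closed argument you sketch presupposes that every point of $M$ lies in a patch where the local alternative applies, but a priori the hypersurface could cease to be transversely convex beyond the initial tube: the cross-sections far from the starting ovaloid need not be ovaloids, or even compact. The paper's continuation argument is more delicate for exactly this reason --- it takes a maximal tube $\mathcal{T}_{A,B}$, uses properness and completeness to show the boundary level set $\Gamma_{B}$ is again a compact cross-cut mapping onto an ovaloid (so the cylinder case contradicts maximality), and in the quadric case classifies the possibilities by analyzing the critical points of the height function $e_{n}^{*}\circ F$ on the closure. You would need an argument of this kind; ``closed conditions whose validity sets are also open'' does not address the possibility that the foliation by ovaloid cross-cuts simply terminates.
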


\subsection{Historical Background}

Historically, the first theorem of this sort was proved by W. Blaschke.

\begin{proposition*} \cite{wB18}
	Suppose every plane transverse, and nearly tangent to, a smooth convex surface $S \subseteq \mathbb{R}^{3}$ intersects 		$S$ along a central loop. Then $S$ is a quadric.
\end{proposition*}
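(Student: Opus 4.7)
I plan to reduce the claim to the classical Maschke--Pick--Berwald theorem of affine differential geometry: a smooth surface in $\mathbb{R}^{3}$ with non-degenerate second fundamental form whose Fubini--Pick cubic form vanishes identically is (an open piece of) a quadric. The task is therefore to deduce pointwise vanishing of the Fubini--Pick form from the central-loop hypothesis.

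The argument is local. Fix $p\in S$. Because central symmetry and the class of nearly tangent transverse planes are invariant under affine automorphisms of $\mathbb{R}^{3}$, I may apply an affine change of coordinates so that $p$ is the origin, $T_{p}S$ is the $xy$-plane, and the second fundamental form of $S$ at $p$ is the standard Euclidean inner product. Near $p$, the surface $S$ is then a graph
\[
z = f(x,y) = \frac{1}{2}(x^{2}+y^{2}) + P_{3}(x,y) + P_{4}(x,y) + O(|(x,y)|^{5}),
\]
where $P_{k}$ is homogeneous of degree $k$. In these coordinates the Fubini--Pick form at $p$ vanishes precisely when the ``trace-free'' part of $P_{3}$ with respect to the metric $x^{2}+y^{2}$ is zero, equivalently when $P_{3}(x,y) = L(x,y)\cdot(x^{2}+y^{2})$ for some linear form $L$.

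For each small $\epsilon>0$, the plane $H_{\epsilon}=\{z=\epsilon\}$ is transverse and nearly tangent to $S$, so by hypothesis the oval $E_{\epsilon}=S\cap H_{\epsilon}$ is centrally symmetric. After rescaling by $\sqrt{2\epsilon}$ and setting $s=\sqrt{\epsilon}$, a direct expansion of the support function of the rescaled oval $\tilde{E}_{s}$ yields
\[
h_{\tilde{E}_{s}}(\theta) = 1 - \sqrt{2}\,s\,P_{3}(\cos\theta,\sin\theta) + O(s^{2}).
\]
A planar convex body is centrally symmetric if and only if the antipodally odd part of its support function is a linear combination of $\cos\theta$ and $\sin\theta$ (which encodes precisely the shift of its center from the origin). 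Since $P_{3}(\cos\theta,\sin\theta)$ expands into Fourier modes $\pm 1$ and $\pm 3$ only, central symmetry of $E_{\epsilon}$ for every small $\epsilon$ forces, at leading order in $s$, the mode-$\pm 3$ component of $P_{3}(\cos\theta,\sin\theta)$ to vanish; an elementary calculation identifies this with the condition $P_{3}(x,y)=L(x,y)(x^{2}+y^{2})$. Hence the Fubini--Pick form vanishes at $p$, and $p$ being arbitrary, it vanishes identically on $S$.

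The main obstacle is the asymptotic argument just sketched: one must carefully justify that the centrality condition, imposed for all small $\epsilon>0$ simultaneously, extracts a single \emph{pointwise} constraint at the lowest nontrivial order in $s$, rather than coupling $P_{3}$ to higher-order jets of $f$ through the $O(s^{2})$ remainder. Once this pointwise vanishing is in hand, Maschke--Pick--Berwald completes the proof.
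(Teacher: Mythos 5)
The paper never proves this proposition; it is quoted as background and attributed to Blaschke \cite{wB18}, so there is no in-paper argument to measure you against. Your route is a genuinely different one from the machinery this paper builds for its main theorem (support maps, the symmetry-obstruction PDEs on $\mathbb{S}^{n-2}\times I$, Harnack splitting): you localize at a point, expand the nearly tangent sections, and feed the resulting pointwise condition into the Maschke--Pick--Berwald theorem. The core asymptotic step you flag as the main obstacle does work: writing $S$ near $p$ as $z=\tfrac12(x^2+y^2)+P_3+O(4)$ and rescaling by $\sqrt{2\epsilon}$, the implicit function theorem gives the radial function of the rescaled section as $1-\sqrt{2}\,s\,P_3(\cos\theta,\sin\theta)+O(s^2)$ with a remainder uniform in $\theta$, the support function agrees with the radial function to first order in $s$, and centrality of each section forces the $\pm3$ Fourier mode of the antipodally odd part of the support function to vanish for every small $s$; dividing by $s$ and letting $s\to0^{+}$ isolates the mode-$3$ part of $P_3$ alone, so no coupling to higher jets occurs at leading order. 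Since a homogeneous cubic restricted to $\mathbb{S}^1$ has only modes $\pm1,\pm3$, and the mode-$3$-free cubics are exactly those of the form $L(x,y)(x^2+y^2)$, this does give vanishing of the trace-free cubic, i.e.\ of the Pick form, at $p$ (note also that convexity guarantees the plane section is the single loop near $p$, so no far-away components interfere).

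The one genuine gap is your silent non-degeneracy assumption. A ``smooth convex surface'' need not have definite second fundamental form at every point (e.g.\ $z=(x^2+y^2)^2$ has a flat point whose nearby horizontal sections are still loops, indeed central ones), yet both your normalization of the $2$-jet to $\tfrac12(x^2+y^2)$ and the Maschke--Pick--Berwald theorem require non-degeneracy; moreover the stated conclusion ``quadric'' includes degenerate quadrics such as cones, which your argument cannot produce. So either restrict the statement, as Blaschke effectively does, to the locally strictly convex (ovaloid-type) case, or add a supplementary argument: on the open set where the second fundamental form is definite your proof shows $S$ is locally a non-degenerate quadric, and since $2$-jets vary continuously, a boundary point of that set would inherit the positive curvature of the osculating quadric, so the degenerate locus is open and closed and an open--closed (plus connectedness) argument upgrades the local quadric pieces to a single global quadric. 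With that addendum, and the uniform-remainder justification sketched above, your proposal is a correct and attractively elementary proof, closer in spirit to Blaschke's own affine differential geometry than to the support-function analysis developed in this paper.
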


B. Solomon, in 2009, removed the convexity assumption and first considered hypersurfaces of revolution as follows:

\begin{proposition*} \cite{bS09}
	Let $M^{n-1} \subseteq \mathbb{R}^{n}$, $n \geq 3$, be a hypersurface of revolution. If $M$ intersects every hyperplane 	nearly perpendicular to its axis of rotation in a central set, then $M$ is a quadric.
\end{proposition*}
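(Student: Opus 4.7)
The plan is to exploit the rotational symmetry of $M$ to reduce the central-symmetry hypothesis, applied to slightly tilted hyperplanes, to a one-dimensional functional equation for the profile of $M$, and then to identify that equation as Jensen's --- whose smooth solutions are affine.

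Place the axis of revolution along the $x_n$-axis, let $r^{2} = x_1^{2}+\cdots+x_{n-1}^{2}$ and $z = x_n$. On each subinterval of $z$ where the profile is a graph, write $M$ locally as $\{r^{2}=F(z)\}$ for a smooth $F$. Hyperplanes $\{z=c\}$ strictly perpendicular to the axis meet $M$ in spheres, which are trivially central; the substantive content is carried by the tilted hyperplanes $H_{c,\epsilon}:=\{z=c+\epsilon x_1\}$ with $\epsilon>0$ small. Using $(x_1,\ldots,x_{n-1})$ as coordinates on $H_{c,\epsilon}$, the cross-section $\Sigma_{c,\epsilon}:=M\cap H_{c,\epsilon}$ is cut out by
\[
x_1^{2}+\cdots+x_{n-1}^{2}\;=\;F(c+\epsilon x_1).
\]

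Since $M$ is invariant under all rotations about the $x_n$-axis, intersecting this group with the stabilizer of $H_{c,\epsilon}$ gives $\Sigma_{c,\epsilon}$ an $O(n-2)$-symmetry fixing the $x_1$-direction (just the reflection $x_2\mapsto -x_2$ when $n=3$). A center of symmetry for $\Sigma_{c,\epsilon}$ must therefore lie on the $x_1$-axis of $H_{c,\epsilon}$, so $p=(a,0,\ldots,0)$ for some $a=a(c,\epsilon)$. The involution $q\mapsto 2p-q$ preserves $\Sigma_{c,\epsilon}$ if and only if
\[
F\bigl(c+\epsilon(2a-x_1)\bigr)-F(c+\epsilon x_1)\;=\;(2a-x_1)^{2}-x_1^{2}\;=\;4a(a-x_1).
\]
Substituting $u=c+\epsilon x_1$, $v=c+\epsilon(2a-x_1)$, $w=c+\epsilon a$, one obtains $u+v=2w$ and
\[
F(v)-F(u)\;=\;\tfrac{2a}{\epsilon}(v-u).
\]

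Writing $u=w-s$, $v=w+s$ turns this into $F(w+s)-F(w-s)=(2a/\epsilon)\cdot 2s$; letting $s\to 0$ identifies $2a/\epsilon$ as $F'(w)$, so
\[
F(w+s)-F(w-s)\;=\;2F'(w)\,s.
\]
As $(c,\epsilon)$ varies with $\epsilon$ small, $(w,s)$ sweeps out an open two-dimensional region; differentiating in $s$ yields Jensen's equation
\[
F'(w+s)+F'(w-s)\;=\;2F'(w),
\]
and smoothness of $F$ forces $F'$ to be affine, so $F(z)=\alpha z^{2}+\beta z+\gamma$. Hence $M$ satisfies the quadric equation $r^{2}-\alpha z^{2}-\beta z-\gamma=0$ on the chart.

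The chief obstacle is verifying that $(w,s)$ really sweeps out a genuine two-dimensional open region rather than a thin sub-family, so Jensen's equation applies in its standard sense; this rests on the smooth dependence of the center $a(c,\epsilon)$ on $(c,\epsilon)$ via the implicit function theorem near $\epsilon=0$, where $a=0$. A secondary technicality is extending the conclusion across points of the profile with a vertical tangent (where $r$ is not a function of $z$): there one parameterizes the profile instead as a graph over $r$, derives the analogous quadratic relation, and uses connectedness of $M$ together with matching on overlapping charts to patch the local equations into a single global quadric.
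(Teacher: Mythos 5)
This proposition is not proved in the paper at all: it is quoted from Solomon \cite{bS09} and later invoked as a black box in the Local version. So the relevant comparison is with that source and with the closely analogous computation the paper does carry out in the Axis lemma (horizontally spherical case), and your route is essentially that one: write the surface locally as $r^{2}=F(z)$, cut with $\{z=c+\epsilon x_{1}\}$, use the residual $O(n-2)$-symmetry to place the center on the $x_{1}$-axis, and convert centrality into the statement that the odd part of $F$ about $w=c+\epsilon a$ is linear with slope $F'(w)$, whence $F'''\equiv 0$ and $F$ is quadratic. That local core is correct, and the step you single out as the chief obstacle is in fact harmless: you do not need smooth dependence of $a(c,\epsilon)$, only that the heights $w(c,\epsilon)=c+\epsilon a(c,\epsilon)$ are dense in the interval (they tend to $c$ as $\epsilon\to 0$), since $F'''$ is continuous and vanishes at each such $w$.

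The genuine weak points are elsewhere. First, ``a center must lie on the $x_{1}$-axis'' uses uniqueness of the center, which you never address: two distinct centers force invariance under a nonzero translation, and you should say why that is impossible for the sections you use (compactness, or boundedness of $\{\lvert x\rvert^{2}=F(c+\epsilon x_{1})\}$ in the $x_{2},\dotsc,x_{n-1}$ directions). Relatedly, $M\cap H$ need not coincide with the zero set of your single local equation: a hypersurface of revolution can have several profile branches at the same heights, so the section is a union of (for small $\epsilon$, nested) ovaloids, and before arguing branch by branch you must check that the point reflection preserves each component and that they share the center. Second, your closing patching step is wrong as described: at a profile point with tangent perpendicular to the axis and positive radius, the tangent hyperplane of $M$ is itself nearly perpendicular to the axis, so the only cutting hyperplanes the hypothesis controls are nearly parallel to that tangent hyperplane; rerunning your computation ``as a graph over $r$'' would require central sections by hyperplanes nearly parallel to the axis, which are not assumed. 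The repair is easier than what you propose: once each graph branch satisfies $r^{2}=\alpha z^{2}+\beta z+\gamma$, such fold points cannot occur at positive radius (a fold would force $F'\to\infty$ at a height where $F>0$, impossible for a quadratic), so graph-over-$z$ charts already cover $M$ away from the axis, the quadric equation extends across axis points by continuity, and connectedness then yields a single quadric.
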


Using the rotationally symmetric case, Solomon then proved the $n = 3$ analogue of our main theorem in 2012.

\begin{proposition*} \cite{bS12}
	Let $M \subseteq \mathbb{R}^{3}$ have the \emph{central oval property}, i.e., $M$ intersects some plane along an oval, 		and every such oval is central. Then a complete, connected, smooth surface $M$ with \emph{central oval property} must 		either be a cylinder over a central oval or a quadric.
\end{proposition*}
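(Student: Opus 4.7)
\emph{Setup: the centrix map.} Fix a plane $H_0$ cutting $M$ transversally in an oval $\Gamma_0$. Strict convexity of $\Gamma_0$ and transversality are open conditions in the three-dimensional manifold of affine planes of $\mathbb{R}^3$, so there is a neighborhood $\mathcal{U}$ of $H_0$ in that manifold for which each $H \in \mathcal{U}$ meets $M$ transversally in an oval $\Gamma_H$ near $\Gamma_0$. By the central oval property, each $\Gamma_H$ has a unique center of symmetry $c(H) \in H$. Using the fact that the center of symmetry of a centrally symmetric planar convex body coincides with its centroid, an application of the implicit function theorem to the centroid formula, integrated against the smoothly varying oval $\Gamma_H$, makes $c \colon \mathcal{U} \to \mathbb{R}^3$ smooth.

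\emph{Producing local involutions.} Fix a unit vector $\nu$ and consider the one-parameter family of parallel planes $H_t = H_0 + t\nu$. The point reflections $x \mapsto 2c(H_t) - x$ assemble into a smooth involution $\Sigma_\nu$ of the neighborhood $U = \bigcup_t \Gamma_{H_t}$ of $\Gamma_0$ in $M$, sending each $\Gamma_{H_t}$ to itself. Varying the foliating direction $\nu$ and the base plane gives a whole family of such involutions, each constraining $M$ along a different two-dimensional leaf. Composing pairs and taking commutators produces further symmetries of $M$ near $\Gamma_0$; these are \emph{a priori} only restrictions of smooth maps to $M$, not restrictions of ambient affine transformations of $\mathbb{R}^3$.

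\emph{From local involutions to affine rigidity.} The decisive step is to expand $c(H)$ to second order in $H$ and match the expansion against the second fundamental form of $M$ along $\Gamma_0$. The three-parameter freedom in the choice of $H$ decomposes into parallel translations (captured by the $\Sigma_\nu$) and two independent tilts about axes in $H_0$. A tilt forces $\Gamma_H$ to deform to first order as a shear of $\Gamma_0$ plus a second-order correction governed by the principal curvatures of $M$. Requiring $\Gamma_H$ to remain central for \emph{all} small tilts yields, pointwise on $\Gamma_0$, an overdetermined system of differential identities linking the shape operator of $M$, its covariant derivatives, and the position of $c(H_0)$ relative to $\Gamma_0$. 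I expect this system to be equivalent to $M$ admitting a genuine ambient affine involution in a neighborhood of each $\Gamma_t$, namely the point reflection through $c(H_t)$: the set-theoretic symmetry of each slice integrates to a symmetry of $M$ itself.

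\emph{Classification and globalization.} Once $M$ is invariant under a smooth one-parameter family of ambient point reflections through the centrix curve $c(t)$, the geometry dichotomizes. If $c(t)$ lies on a line, then compositions of pairs of these reflections yield a one-parameter family of ambient translations preserving $M$; completeness and connectedness of $F$ then force $M$ to be a cylinder over a central cross-section. If $c(t)$ is not contained in any line, the composition of two reflections through nearby centers is a nontrivial affine motion preserving $M$, and suitable iteration together with an affine change of coordinates in $\mathbb{R}^3$ renders the normalized $M$ rotationally symmetric about an axis. The rotationally symmetric proposition of \cite{bS09} cited in the excerpt then identifies the normalized $M$ as a quadric, and undoing the affine change of coordinates recovers a quadric for the original $M$; real-analyticity of the quadric equation, together with completeness, extends the conclusion from a neighborhood of $\Gamma_0$ to all of $M$. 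The principal obstacle is the rigidity step: the involution $\Sigma_\nu$ is smooth but not \emph{a priori} the restriction of an affine transformation of $\mathbb{R}^3$, and the second-order tilting analysis needed to upgrade set-theoretic central symmetry of cross-sections to genuine affine symmetry of $M$ is where the technical heart of the argument must lie.
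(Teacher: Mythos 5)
Your outline stalls exactly where you say the technical heart lies, and the conclusion you ``expect'' from the tilting analysis is in fact false, so the gap is not merely an omitted computation. Consider an ellipsoid: every plane cross-section is central, yet no neighborhood of a non-equatorial slice is invariant under the point reflection through that slice's center; hence centrality of all nearby (tilted and parallel) cross-sections cannot be equivalent to $M$ admitting an ambient point-reflection symmetry through each $c(H_t)$. Since quadrics are precisely one of the two desired conclusions, a rigidity step aimed at producing such involutions cannot work. What the first-order tilt analysis actually yields is analytic, not group-theoretic: writing the nearby cross-sections via the support function $h(u,z)$ of the slices and differentiating the centrix of the tilted slice in the tilt parameter produces two PDEs for $h$; one of them plus an elliptic (Harnack-type) argument forces the slices to be homothetic ($h(u,z)=r(z)h_0(u)$, the Splitting Lemma), the other forces $h_0$ to be the support function of an ellipse, and a separate argument (the Axis Lemma) shows the curve of centers is affine. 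Only after these three facts does an affine change of coordinates make the tube a surface of revolution, where the rotationally symmetric result of \cite{bS09} applies. This is the route of \cite{bS12} and of the present paper in higher dimensions; your involutions $\Sigma_\nu$ of the tube (which do exist and are smooth) are too weak to substitute for it.

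Two further steps are also unsound as written. The dichotomy ``$c(t)$ on a line $\Rightarrow$ cylinder'' fails (the ellipsoid has collinear centers), and since the individual slice reflections do not preserve $M$, their compositions need not either; moreover a composition of two point reflections of $\mathbb{R}^3$ is a translation, never a rotation, so no iteration of them can produce the rotational symmetry you invoke before citing \cite{bS09} --- rotational symmetry comes only after splitting, the ellipse identification, and axis straightening. Finally, the globalization cannot be done by ``real-analyticity of the quadric equation'': $M$ is only assumed smooth, so agreement with a quadric on an open set propagates nowhere by itself. The correct global step is a continuation argument: take the maximal parameter interval over which a transversely convex tube with the central cross-section property exists, use completeness and properness of the immersion to show the boundary is again a compact cross-cut with ovaloid image, extend the tube, and then run a case analysis of which quadrics (ellipsoid, tube and convex hyperboloids, paraboloid) or cylinders can occur, using connectedness to conclude $F(M)$ is the whole model surface.
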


\subsection{Overview of the proof of main theorem}

Our main result is the generalization of Solomon's result \cite{bS12} to all dimensions $n \geq 3$. The proof is not inductive, and although it is, at certain times, a direct generalization of Solomon's method, some essentially new difficulties arose. Moreover, the technical tools required to carry on the argument are more sophisticated and the calculations are more elaborate.

As in Solomon's paper, we obtain our main result by first working the local version. If an immersed hypersurface $M$ intersects some hyperplane along an ovaloid as our definition of \emph{cop} requires, then some neighborhood, in $M$, of that ovaloid embeds into $\mathbb{R}^{n}$ as a tube with \emph{cop}. The proof that these tubes are either cylindrical or quadric constitute the local version of our work. Unless noted otherwise, $I$ denotes the open interval $(-1, 1)$.

\begin{definition}[Transversely convex tube]
	Suppose $X \colon \mathbb{S}^{n-2} \times I \to \mathbb{R}^{n}$, $n \geq 3$, is an embedding of the form
	\begin{equation} \label{E:trcvxtstr}
		X(u, z) = (\, c(z) + \alpha(u, z), z \,)
	\end{equation}
	where $c \colon I \to \mathbb{R}^{n-1}$ and $\alpha \colon \mathbb{S}^{n-2} \times I \to \mathbb{R}^{n-1}$ are 			smooth, and for each fixed $z \in I$, the map $\alpha(\cdot, z) \colon \mathbb{S}^{n-2} \to \mathbb{R}^{n-1}$ 			parameterizes an ovaloid with center of mass at the origin.
\end{definition}

\begin{figure}[h]
	\centering
	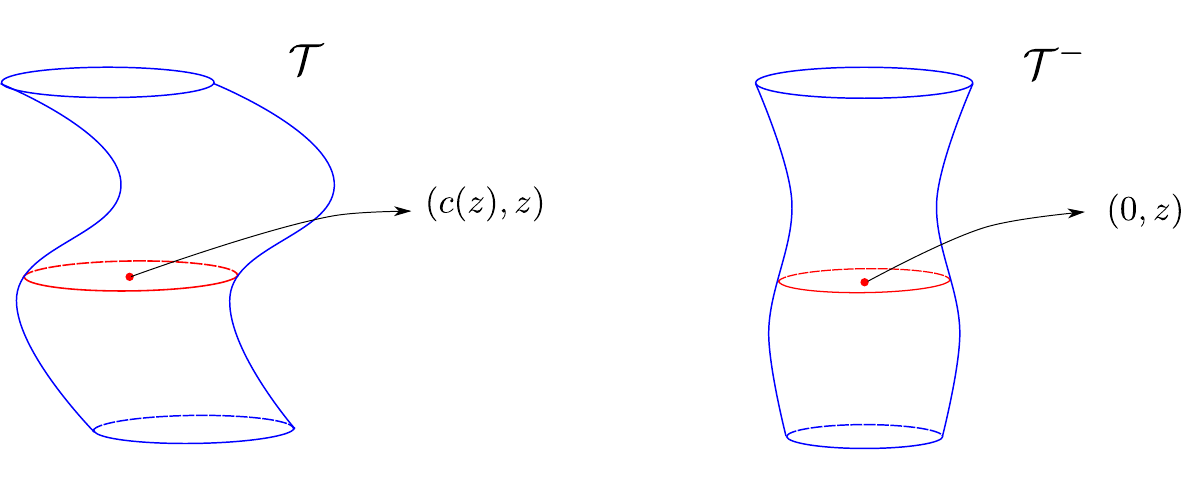
	\caption[Transversely Convex Tubes]{Transversely convex tube $\mathcal{T}$ with its rectification $\mathcal{T^{-}}$}
\end{figure}

A transversely convex tube is any embedded annulus that, after an affine isomorphism, can be parameterized this way. A transversely convex tube in \emph{standard position} is the image of an embedding $X$ of the form \eqref{E:trcvxtstr}. If we discard the  central curve $c$ of a transversely convex tube $\mathcal{T}$ in \emph{standard position}, we get the rectification $\mathcal{T}$, denoted $\mathcal{T^{-}}$, the image of
\[
	X^{-}(u, z) = (\, \alpha(u, z), z \,).
\]

We say that $\mathcal{T^{-}}$ splits when we can separate variables:
\[
	\alpha(u, z) = r(z) \alpha_{0}(u)
\]
for some functions $r \colon I \to (0, \infty)$ and $\alpha_{0} \colon \mathbb{S}^{n-2} \to \mathbb{R}^{n-1}$ parameterizing a fixed ovaloid. As in Solomon's paper we get a Splitting lemma:
\vspace{0.25cm}

\noindent
\textbf{Proposition} \ref{PS5.2:split} (Splitting lemma)\textbf{.} {\itshape If a transversely convex tube $\mathcal{T}$ in \emph{standard position} has \emph{cop}, then its rectification $\mathcal{T}^{-}$ splits.}
\vspace{0.25cm}

To obtain this Splitting lemma we derive a pair of partial differential equations satisfied by the function $h \colon \mathbb{S}^{n-2} \times I \to \mathbb{R}$ which, for each $z \in I$, yields the support function $h(\cdot, z)$ of the ovaloid $\mathcal{O}(z) = \alpha(\mathbb{S}^{n-2}, z)$. Using the second PDE we cook up a strictly elliptic operator with bounded coefficients and use Harnack's inequality to get our Splitting lemma. After we are able to split the function $\alpha$ we use the first PDE to obtain the following local version of our main result for the rectified tube:
\vspace{0.25cm}

\noindent
\textbf{Proposition} \ref{PS5.2:classofrcttube}\textbf{.} {\itshape Suppose $\mathcal{T}$ is a transversely convex tube in \emph{standard position} with \emph{cop}. Then its rectification $\mathcal{T^{-}}$ is either
\begin{enumerate}
	\item a cylinder over a central ovaloid, or
	\item affinely isomorphic to a hypersurface of revolution.
\end{enumerate}}
\vspace{0.25cm}

\noindent
\textbf{Proposition} \ref{PS5.3:axslemm} (Axis lemma)\textbf{.}\! {\itshape Suppose $\mathcal{T}$ is a transversely convex tube with \emph{cop}. Then its central curve is affine and $\mathcal{T}$ is affinely isomorphic to its rectification $\mathcal{T^{-}}$.}
\vspace{0.25cm}

Combining Axis lemma and \cite{bS09} we can conclude that a transversely convex tube with \emph{cop} is either cylindrical or quadric. This local version of our main result can be stated as
\vspace{0.25cm}

\noindent
\textbf{Proposition} \ref{PS5.4:lclvrs} (Local version)\textbf{.} {\itshape A transversely convex tube with \emph{cop} is either a cylinder over a central ovaloid or a quadric.}
\vspace{0.25cm}

The final steps of the proof of our main theorem are as follows:

Given any immersion with \emph{cop} and a cross-cut, Lemma \ref{LS3.3:exttubnbd} (Tubular neighborhood) shows the existence of a neighborhood of this cross-cut foliated by diffeomorphic copies. Then Lemma \ref{LS3.3:exsttrncvxtube} (Existence of transversely convex tube) shows that this neighborhood is embedded onto a transversely convex tube with \emph{cop} about the image of the cross-cut, when that image is an ovaloid. By Proposition \ref{PS5.4:lclvrs} (Local version), the tube is either cylindrical or quadric. But the boundaries of such a tube, in either case, are again images of cross-cuts and therefore, using Lemma \ref{LS3.3:exttrncvxtube} (Extension of transversely convex tube) we can push the boundaries of the annular region in $M$ and the tubular region in $\mathbb{R}^{n}$ a little further. Since $M$ is connected and complete, the extension process stops only when the annular region fills up $M$ and the image is either a complete cylinder or a quadric.

\begin{acknowledgment}
	We would like to thank our advisor Bruce Solomon for giving us this project, and providing guidance. We would also like to 	express our gratitude to the Indiana University Mathematics Department for creating an academically challenging and 		supportive 	environment, and to the Graduate School for the financial support that made our studies possible.
\end{acknowledgment}

\section{Convex Geometry and Support Function} \label{S:cvxgeo}

In this section, we provide the necessary basic definitions and auxiliary lemmas from convex geometry. All definitions and lemmas come from the book of Schneider \cite{rS14}. This section ends with a computation of the support function of an ellipsoid.

\subsection{Convex Sets}

In this subsection, we provide the definition of convex set, convex body, and strictly convex body.

\begin{definition}
	A set $A \subseteq \mathbb{R}^{n}$ is convex if for every $x, \, y \in A$ it contains the segment
	\[
		[x, y] = \big\{\, (1-\lambda) x + \lambda y \colon 0 \leq \lambda \leq 1 \,\big\}.
	\]
	A set $K \subseteq \mathbb{R}^{n}$ is called a convex body if $K \ne \emptyset$, convex, and compact. 	By $\mathcal{K}	^{n}$ we denote the set of all convex bodies in $\mathbb{R}^{n}$. 
\end{definition}

\begin{definition}
	$A \subseteq \mathbb{R}^{n}$, the affine hull of $A$, $\mathrm{aff} A$, is the smallest affine subspace of $\mathbb{R}		^{n}$ containing $A$. The relative boundary, $\mathrm{relBdry}A$, is the boundary of $A$ relative to its affine hull.
\end{definition}

\begin{definition}
	A convex body $K \in \mathcal{K}^{n}$ is called a strictly convex body if $\mathrm{relBdry}K$ does not contain any line 	segment.
\end{definition}

\begin{figure}[h]
	\centering
	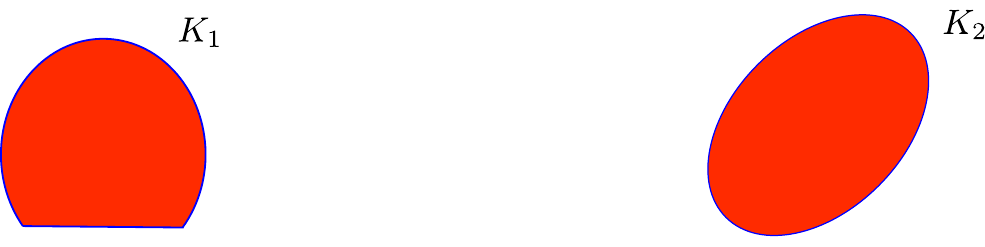
	\caption[Convex bodies]{$K_{1}$ is not strictly convex, $K_{2}$ is strictly convex}\label{FS2.1:cvxbdy}
\end{figure}

\subsection{The Support Function}

In this subsection, we introduce the support function and list its important properties that will be relevant to us in later parts. In particular, we remark how the gradient of a support function determines the boundary points of a convex set. We also give a rigorous definition of the centrix and we end this subsection with a calculation of the support function of an ellipsoid.

\begin{definition}
	Let $K \subseteq \mathbb{R}^{n}$, $\emptyset \ne K \ne \mathbb{R}^{n}$, be a convex body. The support function 			$h(K, \cdot) = h_{K}$ of $K$ is defined on $\mathbb{R}^{n}$ by 
	\[
		h(K, u) = h_{K}(u) = \sup \big\{ x \cdot u \colon x \in K \big\}.
	\]
\end{definition}

\begin{lemma} \label{LS2.2:spfnpr} \cite[p.44, 45] {rS14}
	Let $K \in \mathcal{K}^{n}$, then the support function $h_{K}$ of $K$ has the following properties:
	\begin{enumerate}
		\item $h_{K}$ is sublinear. \label{LS2.2:spfnpr2}
		\item $h(\lambda K, \cdot) = \lambda h(K, \cdot)$ and $h(K, -u) = h(-K, u)$ for all $\lambda \geq 0$	.																			\label{LS2.2:spfnpr3}
		\item Given $T \in \mathrm{O}(n)$, $h_{TK} = h_{K} \circ T^{-1}$.		 \label{LS2.2:spfnpr5}		
		\item $h_{K + v}(u) = h_{K}(u) + u \cdot v$ for all $u \in \mathbb{R}^{n}$ and for all $u$, $v \in \mathbb{R}^{n}$.
																  \label{LS2.2:spfnpr6}
	\end{enumerate}
\end{lemma}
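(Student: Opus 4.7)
Every one of the four assertions unwinds directly from the definition $h_K(u) = \sup\{x \cdot u : x \in K\}$, so my plan is simply to set up four short supremum computations and invoke two elementary properties of $\sup$: namely $\sup(\lambda S) = \lambda \sup S$ for $\lambda \geq 0$, and $\sup(S+T) \leq \sup S + \sup T$ for bounded $S, T \subseteq \mathbb{R}$. Compactness of $K$ guarantees that every supremum is finite and in fact attained, but I would not actually need attainment for any of these identities.

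For (1), I would establish positive homogeneity in $u$ by pulling $\lambda \geq 0$ out of the inner product, $h_K(\lambda u) = \sup_{x \in K} \lambda (x \cdot u) = \lambda\, h_K(u)$, and derive subadditivity by writing $x \cdot (u+v) = x \cdot u + x \cdot v$ and applying the sum-sup inequality. For (2), the identity $h(\lambda K, u) = \lambda h(K,u)$ comes from the substitution $y = \lambda x$, which pushes $\lambda \geq 0$ outside the supremum; the identity $h(K,-u) = h(-K,u)$ comes from $x \cdot (-u) = (-x) \cdot u$ and the substitution $y = -x$. For (3), orthogonality of $T$ gives $T^{\mathrm{T}} = T^{-1}$, hence $h_{TK}(u) = \sup_{x \in K} (Tx) \cdot u = \sup_{x \in K} x \cdot (T^{-1} u) = h_K(T^{-1} u)$. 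For (4), I would expand $(x+v) \cdot u = x \cdot u + v \cdot u$ and pull the constant $v \cdot u$ out of the supremum, obtaining $h_{K+v}(u) = h_K(u) + v \cdot u$.

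There is no substantive obstacle here; the only items I would flag in the writeup are that the hypothesis $\lambda \geq 0$ in parts (1) and (2) is essential, since otherwise the supremum would have to be replaced by an infimum under rescaling, and that subadditivity is genuinely an inequality rather than an equality, because the $x \in K$ that maximizes $x \cdot u$ need not coincide with the one that maximizes $x \cdot v$. As the citation to \cite{rS14} indicates, this lemma is standard bookkeeping, and the proof amounts simply to recording the four routine manipulations above.
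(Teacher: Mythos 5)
Your proposal is correct, and it is the standard argument: the paper itself offers no proof of this lemma, deferring entirely to the citation of Schneider, and your four supremum computations are exactly the routine verifications found there. The one point worth keeping in any writeup is the remark you already make, that subadditivity is only an inequality because the maximizers of $x \cdot u$ and $x \cdot v$ over $K$ need not coincide.
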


The item \eqref{LS2.2:spfnpr2} of Lemma \ref{LS2.2:spfnpr} shows that a support function is sublinear. However, the converse is also true.

\begin{lemma} \label{LS2.2:subvcxbd} \cite[p.45]{rS14}
	If $f \colon \mathbb{R}^{n} \to \mathbb{R}$ is a sublinear function, then there exists a unique convex body $K \in 			\mathcal{K}^{n}$ with support function $f$.
\end{lemma}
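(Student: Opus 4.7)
The plan is to produce $K$ explicitly as the intersection of the closed halfspaces dictated by $f$, and then verify that this set is a convex body whose support function is exactly $f$. Define
\[
    K \;:=\; \bigcap_{u \in \mathbb{R}^{n}} \big\{\, x \in \mathbb{R}^{n} \,:\, x \cdot u \leq f(u) \,\big\}.
\]
As an intersection of closed halfspaces, $K$ is automatically closed and convex, and one half of the support-function identity is free: for every $u$, by construction $x \cdot u \leq f(u)$ for all $x \in K$, so $h_{K}(u) \leq f(u)$. The nontrivial task is therefore to establish (a) that $K$ is nonempty and bounded (hence a convex body), and (b) that the reverse inequality $h_{K}(u) \geq f(u)$ holds for every $u$.

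For boundedness, I would exploit sublinearity on the standard basis: since $f$ is finite and positively homogeneous, the $2n$ inequalities $\pm x_{i} \leq f(\pm e_{i})$ cut out a bounded box containing $K$. Because any finite-valued sublinear function on $\mathbb{R}^{n}$ is automatically continuous, no delicate regularity issues arise. For nonemptiness, I would appeal to the Hahn--Banach theorem in its sublinear-majorant form: starting from the zero functional on the trivial subspace, there exists a linear functional $\ell \colon \mathbb{R}^{n} \to \mathbb{R}$ with $\ell \leq f$ pointwise, and the vector representing $\ell$ then lies in $K$.

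The equality $h_{K} = f$ is the crux and will also lean on Hahn--Banach. Given any $u_{0} \in \mathbb{R}^{n}$, define $\ell_{0}(t u_{0}) = t f(u_{0})$ on the line spanned by $u_{0}$; one checks that $\ell_{0} \leq f$ on this line using positive homogeneity together with the sublinear bound $f(-u_{0}) \geq -f(u_{0})$. Hahn--Banach then extends $\ell_{0}$ to a global linear functional dominated by $f$, which is represented by some $x_{0} \in \mathbb{R}^{n}$. By construction $x_{0} \in K$ and $x_{0} \cdot u_{0} = f(u_{0})$, giving $h_{K}(u_{0}) \geq f(u_{0})$ and completing the equality.

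Uniqueness is then immediate from a general fact I would record separately: any convex body $L$ satisfies $L = \{ x : x \cdot u \leq h_{L}(u) \text{ for all } u\}$, because any point outside the closed convex set $L$ can be strictly separated from it by a hyperplane, contradicting the defining inequality. Consequently two convex bodies with the same support function coincide. The main obstacle I anticipate is purely the Hahn--Banach step used to force $h_{K}(u_{0}) \geq f(u_{0})$; once that linear-extension argument is in place, the rest is bookkeeping.
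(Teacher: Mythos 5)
Your proposal is correct and is essentially the standard argument for this classical fact; the paper offers no proof of its own, deferring to Schneider \cite{rS14}, whose proof likewise realizes $K$ as the intersection of the halfspaces $\{x : x \cdot u \leq f(u)\}$ and recovers $h_{K} = f$ from the fact that a finite sublinear function is the pointwise supremum of the linear functionals it dominates --- which is exactly your Hahn--Banach step. Nothing is missing.
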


\begin{figure}[h]
	\centering
	\input{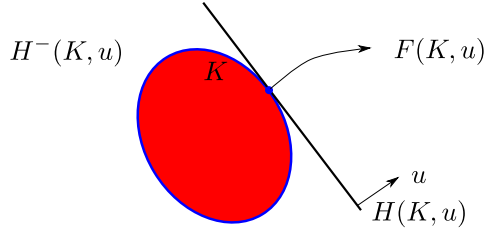}
	\caption{Supporting halfspace, support hyperplane, and support set}
\end{figure}

\begin{definition}	
	Let $K \in \mathcal{K}^{n}$ be a convex body and  $u \in \mathbb{R}^{n}$, then
	\[
		H(K, u) = \big\{ x \in \mathbb{R}^{n} \colon x \cdot u = h_{K}(u) \big\}
	\]
	is called the support hyperplane of $K$ with outer normal $u$,
	\[
		H^{-}(K, u) = \big\{ x \in \mathbb{R}^{n} \colon x \cdot u \leq h_{K}(u) \big\}
	\]
	is called the supporting halfspace of $K$ with outer normal $u$, and 
	\[
		F(K, u) = H(K, u) \cap K
	\]
	is called the support set of $K$ with outer normal $u$.	
\end{definition}

For $u \in \mathbb{S}^{n-1}$, $h_{K}(u)$ is the signed distance of the support hyperplane $H(K, u)$ to the origin.

Note that the support set of convex body does not need to be a singleton. In particular, the convex body $K_{1}$ in \autoref{FS2.1:cvxbdy} has a support set, which is a line segment. However, if the support set is a singleton then we have an important analytical consequence about the support function.

\begin{lemma} \label{LS2.2:diffsptfct} \cite[p.47]{rS14}
	Let $K \in \mathcal{K}^{n}$ and $u \in \mathbb{R}^{n} \setminus \{0\}$. The support function $h_{K}$ is differentiable 	at $u$ if and only if $F(K, u) = \{ x \}$. In this case
	\[
		\nabla_{\mathbb{R}^{n}} \, h_{K} \big\rvert_{u} = x.
	\]
\end{lemma}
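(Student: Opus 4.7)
The plan is to view $h_K$ as a convex function (indeed, it is sublinear, hence convex) and invoke the standard characterization from convex analysis: a finite convex function $f$ on $\mathbb{R}^n$ is differentiable at a point $u$ if and only if its subdifferential
\[
\partial f(u) = \{y \in \mathbb{R}^n : f(v) \geq f(u) + y \cdot (v-u) \text{ for all } v \in \mathbb{R}^n\}
\]
is a singleton, in which case $\nabla f|_u$ equals its unique element. Everything therefore reduces to identifying $\partial h_K(u)$ with $F(K, u)$.

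For the inclusion $F(K, u) \subseteq \partial h_K(u)$, I would take any $x \in F(K, u)$, so that $x \in K$ and $x \cdot u = h_K(u)$; the definition of $h_K$ as a supremum then yields $h_K(v) \geq x \cdot v = h_K(u) + x \cdot (v - u)$ for every $v$, placing $x$ in $\partial h_K(u)$. For the reverse inclusion, I take $y \in \partial h_K(u)$, substitute $v = 0$ and $v = 2u$ into the subgradient inequality, and use positive homogeneity $h_K(tu) = t h_K(u)$ to force $y \cdot u = h_K(u)$. The subgradient inequality then collapses to $h_K(v) \geq y \cdot v$ for every $v \in \mathbb{R}^n$, and the standard duality $K = \{y \in \mathbb{R}^n : y \cdot v \leq h_K(v) \text{ for every } v\}$ (a consequence of Hahn--Banach separation applied to the closed convex body $K$) yields $y \in K$. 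Combined with $y \cdot u = h_K(u)$, this places $y$ in $F(K, u)$.

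With $\partial h_K(u) = F(K, u)$ established, the lemma follows at once: $h_K$ is differentiable at $u$ precisely when $F(K,u)$ is a singleton $\{x\}$, and then $\nabla h_K|_u = x$. The main subtlety is the reverse inclusion, where Hahn--Banach separation is needed to recover membership in $K$ from the pointwise inequality $h_K(v) \geq y \cdot v$; this is the step where the convexity and closedness of $K$, rather than merely the formal properties of $h_K$, really enter. The differentiability equivalence itself is standard and rests on the fact that the directional derivative $v \mapsto h_K'(u; v)$ exists, is sublinear, and satisfies $h_K'(u; v) = \max_{y \in \partial h_K(u)} y \cdot v$, so that it is linear in $v$ exactly when $\partial h_K(u)$ is a single point.
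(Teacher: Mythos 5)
Your argument is correct and is essentially the standard proof of this fact: the paper itself offers no proof, merely citing Schneider, and the identification $\partial h_K(u) = F(K,u)$ followed by the singleton-subdifferential criterion for differentiability of convex functions is exactly the route taken in the cited reference. Both inclusions are handled properly, including the separation step needed to recover $y \in K$ from $h_K(v) \geq y \cdot v$ for all $v$.
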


\begin{definition}[centrix] \label{DS2.2:centrix}
	Let $K \in \mathcal{K}^{n}$ and $h_{K}$ be differentiable in $\mathbb{R}^{n} \setminus \{0\}$, then the centrix of $K$ 	is defined as
	\begin{align*}
		c_{K}      &\colon \mathbb{R}^{n} \setminus \{0\} \to \mathbb{R}^{n}\\
		c_{K}(u) &= c(K, u) = \frac{\nabla_{\mathbb{R}^{n}} \, h_{K}(u)  + \nabla_{\mathbb{R}^{n}} \, h_{K}(-u)}{2}
	\end{align*}
	Note that the function $u \mapsto \nabla_{\mathbb{R}^{n}} \, h_{K}(u)$ and hence the centrix of $K$ are positively 		homogeneous of degree zero.
\end{definition}

\begin{figure}[h]
	\centering
	\input{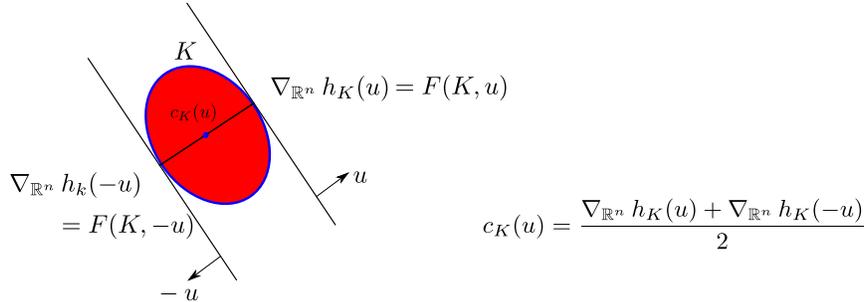}
	\caption{The centrix $c_{K}(u)$}
\end{figure}

We end this subsection with a computation of the support function of an ellipsoid. This support function will play an important rule in the proof of the main theorem.

\begin{example} \label{ES2.5:sptfnctellipsoid} [support function of ellipsoid]
	Given $n$ positive numbers $\lambda_{i} > 0$ $i = 1, \dotsc, n$, the function $h \colon \mathbb{R}^{n} \to \mathbb{R}$ 	defined as
	\[
		h(x) = \left[ \sum_{i=1}^{n} \lambda_{i}^{2} x_{i}^{2} \right]^{1/2}
	\]
	is sublinear and differentiable in $\mathbb{R}^{n} \setminus \{0\}$. By Lemma \ref{LS2.2:subvcxbd} there exists a unique 	convex body $K \in \mathcal{K}^{n}$, with $h_{K} = h$ and 
	\[
		\mathrm{Bdry} \, K \supseteq \left\{\, \nabla_{\mathbb{R}^{n}} \, h_{K} \big\rvert_{x} \colon x \ne 0 \,\right\}
	\]
	We want to show that the set consisting of the gradients of $h_{K}$ equals the complete ellipsoid
	\begin{equation} \label{E:stdellipsoid}
		E = \left\{ x \in \mathbb{R}^{n} \colon \sum_{i=1}^{n} \frac{x_{i}^{2}}{\lambda_{i}^{2}} = 1 \right\}
	\end{equation}
	Let $x \in \mathbb{R}^{n} \setminus \{0\}$, then the gradient of $h_{K}$ at $x$ equals
	\begin{align*}
		\nabla_{\mathbb{R}^{n}} \, h_{K} \big\rvert_{x} 								     &= \frac{1}{h_{K}		(x)} (\lambda_{1}^{2} x_{1}, \dotsc, \lambda_{n}^{2} x_{n}) \text{ and hence}\\
		\sum_{i=1}^{n} \frac{\big[ \lambda_{i}^{2} x_{i} h_{K}^{-1}(x) \big]^{2}}{\lambda_{i}^{2}} &= \frac{1}				{h_{K}^{2}(x)} \sum_{i=1}^{n} \lambda_{i}^{2} x_{i}^{2} = 1. 
	\end{align*}
	On the other hand let $x \in \mathbb{R}^{n}$ satisfy
	\begin{align*}
		\sum_{i=1}^{n} \frac{x_{i}^{2}}{\lambda_{i}^{2}} &= 1 \text{ and choose $z \in \mathbb{R}^{n}$ as}\\
		z &= \left( \frac{x_{1}}{\lambda_{1}^{2}}, \dotsc, \frac{x_{n}}{\lambda_{n}^{2}} \right) \text{ then $h_{K}(z) = 			1$ and}\\  
		\nabla_{\mathbb{R}^{n}} \, h_{K} \big\rvert_{z} &= \left(\lambda_{1}^{2}z_{1}, \dotsc, \lambda_{n}^{2} z_{n} 			\right) = x
	\end{align*}
	So we can conclude that 
	\[
		\mathrm{Bdry} \, K \supseteq \left\{ x \in \mathbb{R}^{n} \colon \sum_{i=1}^{n} \frac{x_{i}^{2}}{\lambda_{i}			^{2}} = 1 \right\}
	\]
	and since $K$ is convex the equality must hold, and $K$ is the convex body with boundary an origin centered ellipsoid. 		From now on $h_{K} = h$ will also be called the support function of the origin centered ellipsoid $E$ given in 				\eqref{E:stdellipsoid}. Given any ellipsoid $\widetilde{E}$ in $\mathbb{R}^{n}$ there exist $T \in \mathrm{O}(n)$ and 		$v \in \mathbb{R}^{n}$ so that $\widetilde{E} = TE + v$, hence the support function $h_{\widetilde{E}}$ of $			\widetilde{E}$ can be calculated using the properties \eqref{LS2.2:spfnpr5} and \eqref{LS2.2:spfnpr6} of the support 		function listed in Lemma \ref{LS2.2:spfnpr} in the following manner:
	\begin{align*}
		h_{\widetilde{E}}(u) &= h_{TE + v}(u) = h_{TE}(u) + u \cdot v\\
					         &= \big( h_{E} \circ T^{-1} \big)(u) + u \cdot v
	\end{align*}
	for every $u \in \mathbb{R}^{n}$. 
\end{example}

\section{Ovaloid and Tubular Neighborhood} \label{S:diffgeo}

Here, we provide the necessary basic definitions and auxiliary lemmas from multilinear algebra and differential geometry. The references for this material are the books of  Federer \cite{hF69}, Hirsch \cite{mH76}. We end this section by sketching proofs of the existence and extension of a transversely convex tube. 

\subsection{Ovaloid and Support Parameterization}

In this subsection, we introduce a principal geometric object of interest, the ovaloid in $\mathbb{R}^{n}$. Using Hadamard's theorem \cite[p.41]{jH98, sKkN69}, in the case $n \geq 3$, we note that every ovaloid has a support parameterization. Writing the Laplacian in polar coordinates and using Lemma \ref{LS2.2:diffsptfct} we give an analytical expression for the support parameterization. 

Lastly, we recall that ovaloids are preserved under affine isomorphisms of $\mathbb{R}^{n}$, find a transversely convex tube about the image of a cross-cut when that image is an ovaloid, and determine sufficient conditions for being able to extend it.  

\begin{definition}[Ovaloid]
	An ovaloid $\mathcal{O}^{n-1} \subseteq \mathbb{R}^{n}$, $n \geq 2$ is a closed embedded smooth hypersurface with 		all principal curvatures positive everywhere with respect to the outer unit normal. 
\end{definition}
In particular, an ovaloid of dimension one, $\mathcal{O} \subseteq \mathbb{R}^{2}$, also called an oval, is an embedded smooth loop in $\mathbb{R}^{2}$ such that, given the outer unit normal vector field $N$ and the unit tangent vector field $T$ along $\mathcal{O}$ 
\[
	\overline{\nabla}_{T} N = \kappa_{g}T
\]
with $\kappa_{g}$, called the geodesic curvature, is positive everywhere.

It is a simple exercise of differential and convex geometry to show that that the Gauss map of an oval is a diffemorphism and that an oval bounds a strictly convex body. The analogous statement holds in higher dimensions by a theorem due to Hadamard \cite{jH98}, with an alternate proof in \cite[p.41]{sKkN69}. Hadamard's theorem implies that any ovaloid $\mathcal{O}^{n-1} \subseteq \mathbb{R}^{n}$, $n \geq 3$ is the boundary of a strictly convex body and that its Gauss map is a diffeomorphism. Using this remark we can make the following definition.

\begin{definition}[Support Parameterization]
	The support parameterization of the ovaloid $\mathcal{O}^{n-1} \subseteq \mathbb{R}^{n}$, $n \geq 2$, that bounds a 		strictly convex body $K$ is defined as the inverse of the Gauss map
	\begin{align*}
	\Gamma = N^{-1} \colon \mathbb{S}^{n-1} &\to \mathcal{O}^{n-1}\\
					      			       u &\mapsto \Gamma(u).
	\end{align*}
	Since $\Gamma$ is bijective and for each $u \in \mathbb{S}^{n-1}$, the outer unit normal vector $u$ to the hyperplane 		$H(K, u)$ at the points of $F(K, u) = K \cap H(K, u) \subseteq \mathcal{O}^{n-1}$ is also normal to the boundary $			\mathcal{O}^{n-1}$, we can conclude that $F(K, u)$ must be a singleton. Now using this observation and Lemma 			\ref{LS2.2:diffsptfct} we can write
	\[
		\bigl\{\, \Gamma(u) \,\bigr\} = F(K, u) = \bigl\{\, \nabla_{\mathbb{R}^{n}} h_{K}(u) \,\bigr\}.
	\]
	So by the sublinearity property of the support function, introduced in item \eqref{LS2.2:spfnpr2} of Lemma 				\eqref{LS2.2:spfnpr}, and the representation of $\nabla_{\mathbb{R}^{n}}$ in polar coordinates we can conclude that the 		support parameterization is also equal to
	\begin{equation}
	\begin{aligned} \label{ES3.2:sptparam}
		\Gamma(u) &= \nabla_{\mathbb{R}^{n}} h_{K}(u)\\
				  &= \nabla_{\mathbb{S}^{n-1}} h_{K}(u) + h_{K}(u)u 
	\end{aligned}
	\end{equation}
	for all $u \in \mathbb{S}^{n-1}$.
\end{definition}

Using the the $\star$-operator defined in \cite[p.34]{hF69} we get, up to a sign, a unique coordinate representation for the unit normal field defined along a smooth orientable hypersurface.

\begin{remark} \label{RS3.2:outrunitnorfld}
 	If $M^{n-1} \subseteq \mathbb{R}^{n}$ is a hypersurface and $x \colon U \subseteq \mathbb{R}^{n-1} \to \mathbb{R}		^{n}$ is a local parameterization of $M$ then up to a sign the unique unit normal field along $x(U)$ is defined as
	\begin{equation} \label{DS3.2:outrunitnorfld}
		u \in U \mapsto \frac{\star \left( \bigwedge \nolimits_{n-1} \derv x_{u}\right)  \left(\frac{\partial}{\partial u_{1}} 			\wedge \cdots \wedge 	\frac{\partial}{\partial u_{n-1}}\right)}{\left \lvert  \star \left( \bigwedge \nolimits_{n-1} \derv 		x_{u}\right)  \left(\frac{\partial}{\partial u_{1}} \wedge \cdots \wedge 	\frac{\partial}{\partial u_{n-1}}\right)\right 			\rvert}.
	\end{equation}	
	When $M^{n-1} \subseteq \mathbb{R}^{n}$ is a closed embedded hypersurface we let the mapping in 					\eqref{DS3.2:outrunitnorfld} define the unit normal field pointing into the unbounded component of $\mathbb{R}^{n} 		\setminus M$.	 
\end{remark}

During the course of our proof of the main theorem we want to use the fact that the hypotheses and the conclusion are invariant under affine isomorphisms of $\mathbb{R}^{n}$. To achieve this, we use the fact that ovaloids are preserved under affine isomorphisms of $\mathbb{R}^{n}$. 

\begin{lemma} \label{LS3.2:Glnprsvovld}
	If $n \geq 2$, $G$ is an affine isomorphism of $\mathbb{R}^{n}$, and $\mathcal{O}^{n-1} \subseteq \mathbb{R}^{n}$ is 	an ovaloid, then $G \mathcal{O}^{n-1}$ is again an ovaloid.
\end{lemma}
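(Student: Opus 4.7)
The plan is to deduce positivity of the principal curvatures of $G\mathcal{O}$ from two complementary facts: that $G\mathcal{O}$ bounds a strictly convex body (yielding a positive semi-definite second fundamental form with respect to the outer unit normal), and that the Gauss map of $G\mathcal{O}$ is a global diffeomorphism onto $\mathbb{S}^{n-1}$ (yielding non-degeneracy of that form). Combined, these force the shape operator to have all positive eigenvalues, so $G\mathcal{O}$ is again an ovaloid. The point is that principal curvatures themselves are only isometric invariants, so one cannot simply push them forward under $G$; instead one must argue via the two affine-invariant conditions above.

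First I would observe that $G\mathcal{O}$ is closed, embedded, and smooth because $G$ is a $C^{\infty}$ diffeomorphism of $\mathbb{R}^n$. Writing $G(x) = Ax + b$ with $A \in \mathrm{GL}(n,\mathbb{R})$, Hadamard's theorem gives $\mathcal{O} = \bdry K$ for a strictly convex body $K \in \mathcal{K}^n$. Since affine isomorphisms preserve convexity and carry line segments to line segments, $GK$ is itself a strictly convex body with boundary $G\mathcal{O}$. Realizing $G\mathcal{O}$ locally as the graph of a convex function over its tangent hyperplane at each point then shows that its second fundamental form is positive semi-definite with respect to the outer unit normal.

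The main technical step is to verify that the Gauss map $N'\colon G\mathcal{O} \to \mathbb{S}^{n-1}$ is a diffeomorphism. Letting $N\colon \mathcal{O}\to \mathbb{S}^{n-1}$ denote the (already diffeomorphic) Gauss map of $\mathcal{O}$ and $\tau\colon \mathbb{S}^{n-1}\to\mathbb{S}^{n-1}$ the smooth diffeomorphism $u \mapsto (A^{-1})^{T} u/\lvert(A^{-1})^{T}u\rvert$ (whose inverse is the analogous normalization of $A^{T}$), the key identity is
\[
  N'(Gp) \;=\; \epsilon(p)\,\tau\bigl(N(p)\bigr),
\]
where $\epsilon(p)\in\{\pm 1\}$ is locally constant in $p$, hence globally constant by connectedness of $\mathcal{O}$, and is pinned down by the fact that $G$ carries the unbounded component of $\mathbb{R}^n \setminus \mathcal{O}$ onto the unbounded component of $\mathbb{R}^n \setminus G\mathcal{O}$. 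Consequently $N'$ factors as a composition of diffeomorphisms, so the shape operator $-dN'$ is invertible everywhere on $G\mathcal{O}$; combined with positive semi-definiteness from the previous paragraph, this upgrades to positive definiteness. The main obstacle is the orientation bookkeeping needed to justify that $\epsilon$ is constant and that the resulting normal is genuinely the outer one; once this is handled, all remaining steps reduce to affine invariance of convexity and elementary linear algebra.
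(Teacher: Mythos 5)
Your argument is correct, but it takes a genuinely different route from the paper's. The paper's sketch uses the polar decomposition: it writes $G$ as a translation composed with an orthogonal map and an invertible symmetric map, notes that isometries preserve normal curvatures outright, and handles the symmetric factor by a local computation, writing the ovaloid and its image as graphs and tracking how the Hessian transforms. You instead avoid all curvature computations: positive semi-definiteness of the shape operator of $G\mathcal{O}$ with respect to the outer normal comes from the affine invariance of (strict) convexity together with Hadamard's theorem, and non-degeneracy comes from the factorization of the Gauss map, $N'\circ G = \pm\,\tau\circ N$ with $\tau(u) = (A^{-1})^{T}u/\lvert (A^{-1})^{T}u\rvert$ a diffeomorphism of $\mathbb{S}^{n-1}$; together these force positive definiteness. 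What the paper's route buys is self-containedness, needing only linear algebra and a second-order Taylor expansion; what yours buys is conceptual economy, since it reuses exactly the two global facts (the ovaloid bounds a strictly convex body, and its Gauss map is a diffeomorphism) already established in this section, and it never has to say how individual principal curvatures transform, which, as you rightly observe, they do not in any simple way. One small remark: the sign $\epsilon$ is irrelevant to your argument. Invertibility of $dN'$ holds whether $N'\circ G$ equals $\tau\circ N$ or its antipode, and positivity is supplied independently by the convexity step, so the orientation bookkeeping you single out as the main obstacle can be dispensed with entirely; you only need $\epsilon$ if you insist on identifying which of $\pm\tau(N(p))$ is the outer normal, and for that the observation about unbounded components suffices.
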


\begin{proof}[Sketch of proof]
	Normal curvatures are preserved under isometries and hence ovaloids are preserved under orthogonal maps and translations. 	Since each invertible linear map can be written as the composition of a symmetric linear map and an orthogonal 	map, it 		suffices to show that ovaloids are preserved under invertible symmetric maps. The last claim can be easily shown by 			writing the ovaloid and its image locally as a graph.  
\end{proof}

\subsection{Tubular Neighborhood}

In this subsection, our main goal is to construct a transversely convex tube $\mathcal{T}$ about the image of a cross-cut when that image is an ovaloid. In order to achieve this goal, we first need Lemma \ref{LS3.3:exttubnbd} (Tubular neighborhood), which provides a neighborhood for a cross-cut foliated by diffeomorphic copies. Since the cross-sections of $\mathcal{T}$ that are close enough to the image of the cross-cut must also be ovaloids we can use Lemma \ref{LS3.3:exsttrncvxtube} (Existence of transversely convex tube) to get the required tube.  

Another important goal is to give sufficient condition for extensibility of a transversely convex tube. In particular, we point out in Lemma \ref{LS3.3:exttrncvxtube} (Extension of transversely convex tube) that whenever the boundary of the tube is the image of a cross-cut then one can extend the tube a little further.

\subsubsection{Existence of tubular neighborhood}

\begin{definition} \
	Given $u \in \mathbb{R}^{n} \setminus \{0\}$, define the linear function $u^{*} \colon \mathbb{R}^{n} \to \mathbb{R}$ by 	$u^{*}(x) = u \cdot x$. For any given $u \in \mathbb{R}^{n} \setminus \{0\}$ and $\alpha \in \mathbb{R}$, the level 		hyperplane $H_{u, \alpha}$ is defined by
	\[
		H_{u, \alpha} = \{x \colon u^{*}(x) = \alpha\}.
	\]
\end{definition}

\begin{lemma}[Tubular neighborhood] \label{LS3.3:exttubnbd}
	Suppose $M$ is a smooth manifold, and $\Sigma$ is a compact connected embedded hypersurface of $M$.  Assume that for 	some $u \in \mathbb{R}^{n} \setminus \{0\}$ and $\alpha \in \mathbb{R}$, $F$ is transversal to $H_{u, \alpha}$ 			along $\Sigma$, then there exist $a > 0$ and an embedding $\psi \colon \Sigma \times [-a, a] \to M$ with the following 		properties:
	\begin{enumerate}
	\item $\psi(x, 0) = x$ for all $x \in \Sigma$,
	\item $\bigl( u^{*} \circ F \circ \psi)(x, h) = h$ for all $x \in \Sigma$, $\lvert h \rvert \leq a$,
	\item $\derv \, \bigl( u^{*} \circ F \circ \psi \bigr) (x, h) \ne 0$ for all $x \in \Sigma$, $\lvert h \rvert \leq a$.
	\end{enumerate}
\end{lemma}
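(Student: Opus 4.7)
The plan is to build a vector field $V$ on a neighborhood of $\Sigma$ in $M$ whose flow realizes the desired parametrization $\psi$. Set $f := u^{*} \circ F - \alpha \colon M \to \mathbb{R}$, so that $\Sigma \subseteq f^{-1}(0)$ and, by the transversality hypothesis, $df_{x} = u^{*} \circ dF_{x}$ is nonzero at every $x \in \Sigma$. Shrinking to a neighborhood $U$ of $\Sigma$ where $df \neq 0$ still holds and fixing any Riemannian metric on $U$, I would set $V := \nabla f / \lvert \nabla f \rvert^{2}$; this is smooth and satisfies $V(f) \equiv 1$ on $U$.

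Next, define $\psi$ as the flow of $V$: since $\Sigma$ is compact, there exists some $a > 0$ for which $\phi_{h}^{V}(x)$ is defined for all $x \in \Sigma$ and $\lvert h \rvert \leq a$, and one sets $\psi(x, h) := \phi_{h}^{V}(x)$. Property (i) is immediate from $\phi_{0}^{V} = \mathrm{id}$. For property (ii), integrate $\frac{d}{dh}(f \circ \psi)(x, h) = df(V) \equiv 1$ from the initial condition $f(x) = 0$ to obtain $f(\psi(x, h)) = h$, i.e. $(u^{*} \circ F \circ \psi)(x, h) = \alpha + h$, which is the stated identity after the harmless normalization $\alpha = 0$. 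Property (iii) is then automatic, since $d(u^{*} \circ F \circ \psi)$ has $\partial_{h}$-component equal to $1$ everywhere.

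Finally, I must verify that $\psi$ is an embedding, possibly after shrinking $a$. At every $(x, 0)$, the differential $d\psi_{(x, 0)}$ restricts to the identity on $T_{x}\Sigma$ and sends $\partial_{h}$ to $V(x)$; since $df$ vanishes on $T_{x}\Sigma$ but $df(V) = 1$, the vector $V(x)$ is transverse to $\Sigma$, so $d\psi_{(x, 0)}$ is an isomorphism. The inverse function theorem together with compactness of $\Sigma$ yield a uniform $a$ on which $\psi$ is a local diffeomorphism, and I would then invoke the classical compactness argument from the tubular neighborhood theorem to rule out collisions $\psi(x, h) = \psi(x', h')$ with $(x, h) \neq (x', h')$: any hypothetical sequence of such collisions with $h_{k}, h'_{k} \to 0$ would subconverge to a common limit point of $\Sigma$, contradicting local injectivity there. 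This promotion of local diffeomorphism to a global embedding is the main technical obstacle; all earlier steps are routine once the vector field $V$ has been constructed.
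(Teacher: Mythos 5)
Your proposal is correct and follows essentially the same route as the paper: both construct the normalized gradient field $\nabla(u^{*}\circ F)/\lvert \nabla(u^{*}\circ F)\rvert^{2}$ on a neighborhood of $\Sigma$ and obtain $\psi$ from its flow via a flow-box/compactness argument, with properties (i)--(iii) following from $V(f)\equiv 1$. You in fact supply more detail than the paper's sketch, including the correct observation that the statement implicitly normalizes $\alpha=0$.
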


\begin{remark} \label{RS3.3:ccutfoliate}
	According to Lemma \ref{LS3.3:exttubnbd} (Tubular neighborhood), $\Sigma$ is a cross-cut of $F$ relative to $H_{u, 		\alpha}$ and the neighborhood $U = \psi\bigl(\, \Sigma \times [-a, a] \,\bigr)$ of $\Sigma$ in $M$ is foliated by cross-cuts $	\psi(\Sigma \times \{t\})$ diffeomorphic to $\Sigma$, each a level set of $u^{*} \circ F$.
\end{remark}

\begin{proof}[Sketch of proof]
	Since the gradient $\nabla (u^{*} \circ F)$ does not vanish on the compact submanifold $\Sigma$ we can define the vectorfield
	\[
		X = \frac{\nabla (u^{*} \circ F)}{\lvert \nabla (u^{*} \circ F) \rvert^{2}}
	\]
	in an open neighborhood of $\Sigma$. Using a standard flow box argument for the vectorfield $X$ we then get the required 	embedding $\psi$.
\end{proof}

\subsubsection{Existence and extension of transversely convex tube}

\begin{lemma}[Existence of transversely convex tube] \label{LS3.3:exsttrncvxtube}
	Suppose $n \geq 4$, the map $F \colon M^{n-1} \to \mathbb{R}^{n}$ is an immersion, $H = H_{u, \alpha}$ is a hyperplane, 	and $\Sigma \subseteq F^{-1}(H)$ is a compact connected embedded hypersurface of $M$. If  $F(\Sigma)$ is an ovaloid and 	the function $u^{*}\circ F$ has no critical point on $\Sigma$ then $F$ maps some neighborhood of $\Sigma$ in $M$ onto a 	transversely convex tube.
\end{lemma}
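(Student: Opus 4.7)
The plan is to foliate a neighborhood of $\Sigma$ by cross-cuts, check that each nearby cross-cut is itself an ovaloid, and then repackage the resulting family of ovaloids as a map $\mathbb{S}^{n-2}\times I\to\mathbb{R}^{n}$ in the required form via support parameterizations.

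First, I would apply Lemma \ref{LS3.3:exttubnbd} (Tubular neighborhood) at $\Sigma$ to obtain $a>0$ and an embedding $\psi\colon\Sigma\times[-a,a]\to M$ so that $u^{*}\circ F\circ\psi(x,h)=\alpha+h$. By Remark \ref{RS3.3:ccutfoliate}, each slice $\Sigma_{h}:=\psi(\Sigma\times\{h\})$ is a cross-cut of $F$ relative to $H_{u,\alpha+h}$. Applying a fixed affine isomorphism of $\mathbb{R}^{n}$ (which, by Lemma \ref{LS3.2:Glnprsvovld}, preserves ovaloids and hence our hypotheses and conclusion), I may arrange that $u=e_{n}$ and $\alpha=0$, so that $F\circ\psi(x,h)\in\mathbb{R}^{n-1}\times\{h\}$ for every $(x,h)$.

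Next, I would argue that by shrinking $a$ the image $\mathcal{O}(h):=F(\Sigma_{h})$ is an ovaloid in $\mathbb{R}^{n-1}\times\{h\}$ for every $h\in[-a,a]$. Compactness of $\Sigma$ and smoothness of $F\circ\psi$ make the second fundamental form of each slice (computed inside its hyperplane) a smooth function of $(x,h)$; at $h=0$ it is positive definite by assumption, so positive definiteness persists on a smaller tube. Injectivity of $F\circ\psi$ on this smaller tube follows because different slices sit in different parallel hyperplanes, and $F$ restricted to each slice is an embedding (its image is an embedded ovaloid); thus $F\circ\psi$ is an embedding onto an annular piece of $F(M)$.

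Now I would put the family in standard form. For each $h$, let $c(h)\in\mathbb{R}^{n-1}$ be the center of mass of $\mathcal{O}(h)$; since $\mathcal{O}(h)$ varies smoothly in $h$ (as a smooth family of embeddings of $\Sigma$), so does $c(h)$. By the Hadamard theorem cited in Section \ref{S:diffgeo} each translated ovaloid $\mathcal{O}(h)-c(h)$ has a diffeomorphic Gauss map, so \eqref{ES3.2:sptparam} provides a support parameterization $\Gamma_{h}\colon\mathbb{S}^{n-2}\to\mathbb{R}^{n-1}$ with image $\mathcal{O}(h)-c(h)$. Setting $\alpha(v,h):=\Gamma_{h}(v)$ and reparameterizing $h$ by a smooth diffeomorphism onto $I=(-1,1)$, the map
\[
X(v,z)=\bigl(\,c(z)+\alpha(v,z),\,z\,\bigr)
\]
has image equal to $F\bigl(\psi(\Sigma\times[-a,a])\bigr)$ and is of the form \eqref{E:trcvxtstr}. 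Smoothness in $z$ and bijectivity follow from the construction; that $X$ is an embedding (and not merely an immersion) follows from injectivity of $F\circ\psi$ on each slice and the fact that distinct $z$-levels land in distinct hyperplanes.

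The main technical point I expect to work through carefully is the joint smoothness of $\alpha(v,z)$ in both variables: one needs that the Gauss map of $\mathcal{O}(z)$ varies smoothly with $z$ and that its inverse does too. This follows by applying the inverse function theorem to the smooth family of Gauss maps, using that each Gauss map is a local diffeomorphism uniformly in $z$ on the compact tube, but it is the only step that is not essentially formal.
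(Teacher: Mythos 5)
Your proposal is correct and follows essentially the same route as the paper's (itself only sketched) proof: foliate by cross-cuts via Lemma \ref{LS3.3:exttubnbd}, use continuity of the principal curvatures on the compact tube to keep nearby slices convex, verify that $F\circ\psi$ is an embedding, and recenter each slice to reach the form \eqref{E:trcvxtstr}. The one justification you should reorder is the parenthetical ``its image is an embedded ovaloid'' for the slices with $h\neq 0$: embeddedness of those slices is not a hypothesis but follows from Hadamard's theorem applied to the compact convex immersion $F|_{\Sigma_h}$ into $\mathbb{R}^{n-1}$ (using $n\geq 4$), which is exactly the ingredient you invoke a sentence later and which the paper handles instead via openness of embeddings in the strong topology.
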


\begin{proof}[Sketch of proof]
	We can compose the immersion $F$ with the embedding $\psi$, obtained in Lemma \ref{LS3.3:exsttrncvxtube}, to get an 		embedding because $F$ embeds $\Sigma$ onto a simply connected ovaloid and the set of embeddings is  open in the strong 	topology \cite[p.38]{mH76}. Since the principal curvatures are continuous there exists an open neighborhood of  $F(\Sigma)$ 	on the image of $F \circ \psi$ so that each horizontal cross section is again an ovaloid.
\end{proof}

\begin{remark}
	Now suppose $F$, $\Sigma$ are as above and $\mathcal{T}_{a}$ is the transversely convex tube whose existence is 			guaranteed  by Lemma \ref{LS3.3:exsttrncvxtube}. Note that if $F$ has  \emph{cop} so does $\mathcal{T}_{a}$. Let $U_{a} 	= \psi \bigl( \Sigma \times [-a, a] \bigr)$ be the neighborhood of $\Sigma$ that maps under $F$ onto $\mathcal{T}_{a}$ as described in 	Lemma \ref{LS3.3:exttubnbd} and Lemma \ref{LS3.3:exsttrncvxtube}.
\end{remark}

\begin{lemma}[Extension of transversely convex tube] \label{LS3.3:exttrncvxtube}
	Let $F \colon M^{n-1} \to \mathbb{R}^{n}$ be an immersion with \emph{cop}, and $\Sigma \subseteq M$ a cross-cut of $F$ 	relative to $H = 	H_{u, \alpha}$, then given a transversely convex tube $\mathcal{T}_{a}$, with \emph{cop}, about 			$F(\Sigma)$, where $u^{*} \circ F$ has no 	critical point in $U_{a}$, there exists $d > 0$ so that $\mathcal{T}_{a+d} 		\supseteq \mathcal{T}_{a}$ is a transversely convex tube, with \emph{cop}, about $F(\Sigma)$, where $u^{*} \circ F$ has no 	critical point in a larger neighborhood $U_{a+d}$ of $\Sigma$. 
\end{lemma}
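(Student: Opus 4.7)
The plan is to apply Lemma \ref{LS3.3:exsttrncvxtube} at each of the two boundary cross-cuts of the existing tube $\mathcal{T}_a$, assemble the resulting one-sided extensions with $\psi$ into a single parameterization, and then invoke continuity to verify that transverse convexity, the no-critical-point condition, and \emph{cop} all survive on the enlarged domain.

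First I would set $\Sigma_{\pm} := \psi(\Sigma \times \{\pm a\})$. Because $\mathcal{T}_a$ is transversely convex, the compact hypersurfaces $F(\Sigma_{\pm})$ are ovaloids sitting in the hyperplanes $H_{u,\,\alpha \pm a}$; and because $u^{*}\circ F$ has no critical points in $U_a$, in particular none on $\Sigma_{\pm}$, $F$ is transverse to those hyperplanes along $\Sigma_{\pm}$. Lemma \ref{LS3.3:exsttrncvxtube} applied at each $\Sigma_{\pm}$ then yields, for some $\delta > 0$, embeddings $\psi_{\pm} \colon \Sigma \times [-\delta, \delta] \to M$ that parameterize small transversely convex tubes about $F(\Sigma_{\pm})$ and satisfy conditions (1)--(3) of Lemma \ref{LS3.3:exttubnbd}.

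Next I would glue. Since both $\psi$ and $\psi_{\pm}$ arise as flows of the common vectorfield $X = \nabla(u^{*}\circ F)/\lvert\nabla(u^{*}\circ F)\rvert^{2}$ used in the proof of the Tubular neighborhood lemma, uniqueness of solutions to the associated ODE forces $\psi_{\pm}$ to agree with $\psi$ on their overlap after an obvious shift of parameter. The three maps therefore assemble into a single smooth map $\widetilde\psi \colon \Sigma \times [-a-\delta, a+\delta] \to M$ satisfying the conclusions of Lemma \ref{LS3.3:exttubnbd}. Shrinking $\delta$ if necessary and using that embeddings are open in the strong topology \cite[p.~38]{mH76} on the compact domain, I may assume $F\circ \widetilde\psi$ is itself an embedding.

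Finally I would pick $d \in (0, \delta]$ small enough that two continuity arguments close the proof. The principal curvatures of the boundary ovaloids of $\mathcal{T}_a$ are strictly positive on a compact set, so they remain positive on every sufficiently nearby horizontal cross-section of $F\circ \widetilde\psi$; hence $F\circ\widetilde\psi$ maps $\Sigma \times [-a-d, a+d]$ onto a transversely convex tube $\mathcal{T}_{a+d} \supseteq \mathcal{T}_a$. Similarly $\lvert \nabla(u^{*}\circ F)\rvert$ is bounded below on the compact set $\overline{U}_a$, so it remains nonzero on the slightly larger neighborhood $U_{a+d}$. The \emph{cop} property is inherited at no cost: any cross-cut of $\mathcal{T}_{a+d}$ whose image is an ovaloid is \emph{a fortiori} an ovaloid cross-cut of $F$, hence central by the \emph{cop} of $F$. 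The delicate point is not any single step but the glueing: one must confirm that $\widetilde\psi$ and $F\circ\widetilde\psi$ remain genuine embeddings after assembly, which I would handle by the same strong-topology openness device used in the sketch of Lemma \ref{LS3.3:exsttrncvxtube}, applied on the compact parameter domain.
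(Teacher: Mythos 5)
Your proposal is correct and follows essentially the same route as the paper's own (sketched) proof: apply Lemma \ref{LS3.3:exsttrncvxtube} at each boundary cross-cut of $\mathcal{T}_{a}$, glue the resulting one-sided tubes to $\mathcal{T}_{a}$, and observe that \emph{cop} for the extended tube is inherited from \emph{cop} for $F$. Your additional details --- identifying the glued pieces via uniqueness of the flow of $X = \nabla(u^{*}\circ F)/\lvert\nabla(u^{*}\circ F)\rvert^{2}$ and the compactness/continuity arguments for transverse convexity and the non-vanishing of the gradient --- are exactly the content the paper leaves as an ``easy exercise.''
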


\begin{proof}[Sketch of proof]
	Since each boundary of the transversely convex tube $\mathcal{T}_{a}$ is the image of a cross-cut we can use Lemma 		\ref{LS3.3:exsttrncvxtube} to construct a transversely convex tube about each boundary. Then it is an easy exercise to glue 		these two transversely convex tubes about each boundary to the tube $\mathcal{T}_{a}$ to get the required extension. Finally, 	the \emph{cop} of $F$ implies that the extended tube has also \emph{cop}.
\end{proof}

\section{Proof of Main Theorem}

Here, we give the proof of the main theorem. We first show that the conclusion holds locally and then use standard differential topology arguments to get the global result. The two papers that are cited, \cite{bS09} and \cite{bS12}, were written by Solomon. Unless stated otherwise, we assume that $n \geq 3$ throughout this section.

\subsection{Ellipsoid and Centrix}

In this subsection, we classify the support function of an origin centered ellipsoid as a function whose square is the solution of some differential equation. We define the centrix of an ovaloid and characterize central symmetry in terms of the centrix.

\subsubsection{Origin centered ellipsoid}

\begin{lemma} \label{LS5.1:orgcntellipsoid}
	Let $h \in C^{2}(\mathbb{S}^{n-2})$, $n \geq 3$, be a non vanishing function on $\mathbb{S}^{n-2}$, then
	\[
		\nabla_{\mathbb{S}^{n-2}} \bigl[ \, \Delta_{\mathbb{S}^{n-2}} h^{2} + 2 (n - 1)h^{2} \,\bigr](u)  = 0
	\]
	for each $u \in \mathbb{S}^{n-2}$ if and only if $h$ or $-h$ is the support function of an origin centered ellipsoid.
\end{lemma}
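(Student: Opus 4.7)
The plan is to recognize that $\nabla_{\mathbb{S}^{n-2}}F\equiv 0$ combined with connectedness of the sphere is equivalent to $F:=\Delta_{\mathbb{S}^{n-2}}h^{2}+2(n-1)h^{2}$ being a constant $C$. I would then translate this spherical identity into a Euclidean one by homogenizing. Extend $h^{2}$ to a degree-$2$ homogeneous function $H(x)=|x|^{2}h^{2}(x/|x|)$ on $\mathbb{R}^{n-1}\setminus\{0\}$, and use the polar splitting
\[
\Delta_{\mathbb{R}^{n-1}}=\partial_{r}^{2}+\frac{n-2}{r}\partial_{r}+\frac{1}{r^{2}}\Delta_{\mathbb{S}^{n-2}}
\]
applied to $H=r^{2}h^{2}$ to obtain, on $\mathbb{S}^{n-2}$, the identity
$\Delta_{\mathbb{R}^{n-1}}H=\Delta_{\mathbb{S}^{n-2}}h^{2}+2(n-1)h^{2}$. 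By degree-$0$ homogeneity of $\Delta H$, the hypothesis then becomes $\Delta_{\mathbb{R}^{n-1}}H\equiv C$ on all of $\mathbb{R}^{n-1}\setminus\{0\}$.

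For the easy direction, if $h$ is the support function of an origin-centered ellipsoid, Example \ref{ES2.5:sptfnctellipsoid} (after an orthogonal change of coordinates) gives $h^{2}(u)=u^{T}Au$ for some symmetric positive-definite $A$, so $H(x)=x^{T}Ax$ is a homogeneous quadratic polynomial whose Laplacian equals the constant $2\,\mathrm{trace}(A)$; the displayed identity follows immediately.

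For the converse, I would decompose $h^{2}\in C^{2}(\mathbb{S}^{n-2})$ into spherical harmonics $h^{2}=\sum_{k\geq 0}Y_{k}$ with $\Delta_{\mathbb{S}^{n-2}}Y_{k}=-k(k+n-3)Y_{k}$. Substituting gives
\[
\sum_{k\geq 0}\bigl[\,2(n-1)-k(k+n-3)\,\bigr]Y_{k}=C.
\]
The bracket vanishes exactly for $k=2$, equals $2(n-1)$ for $k=0$ (absorbing the constant $C$), and is nonzero for $k=1$ and all $k\geq 3$ when $n\geq 3$. This forces $Y_{k}=0$ for $k\notin\{0,2\}$, so $h^{2}=Y_{0}+Y_{2}$ is the restriction to $\mathbb{S}^{n-2}$ of a homogeneous quadratic polynomial $Q(x)=x^{T}Ax$ on $\mathbb{R}^{n-1}$ ($Y_{0}$ extending as a multiple of $|x|^{2}$, and $Y_{2}$ as the unique harmonic degree-$2$ homogeneous polynomial with $Y_{2}$ as its spherical trace). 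Since $h$ is non-vanishing and $\mathbb{S}^{n-2}$ is connected, $h^{2}>0$ on the sphere, hence $A$ is positive definite; by Example \ref{ES2.5:sptfnctellipsoid}, $|h|$ is the support function of an origin-centered ellipsoid, and the constant sign of $h$ forces $h=|h|$ or $-h=|h|$.

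The main technical point is the eigenvalue check that isolates $k\in\{0,2\}$, which reduces to the elementary inequality $k(k+n-3)>2(n-1)$ for all $k\geq 3$ and $n\geq 3$ and the separate verification at $k=1$. I do not anticipate a serious obstacle here; the only delicate bookkeeping is recognizing that a spherical harmonic of prescribed degree admits a unique homogeneous polynomial extension, which justifies passing from the sphere-level conclusion $h^{2}=Y_{0}+Y_{2}$ to the Euclidean quadratic form $Q$.
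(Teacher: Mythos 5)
Your argument is correct and follows essentially the same route as the paper: both reduce the hypothesis to $\Delta_{\mathbb{S}^{n-2}}h^{2}+2(n-1)h^{2}\equiv\mathrm{const}$, identify the non-constant part of $h^{2}$ as a degree-$2$ spherical harmonic (the paper by writing $h^{2}$ as a particular plus homogeneous solution of $Lh^{2}=c$ and noting $2(n-1)$ is the degree-$2$ eigenvalue, you by the full eigenvalue check across all degrees $k$), realize $h^{2}$ as a positive definite quadratic form, and invoke the ellipsoid support function computation of Example \ref{ES2.5:sptfnctellipsoid}. Your explicit verification that the coefficient $2(n-1)-k(k+n-3)$ is nonzero for $k\notin\{0,2\}$ makes precise a step the paper leaves implicit, but the underlying mechanism is identical.
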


\begin{proof}
	$(\Rightarrow)$: Define the linear differential operator $L$ as $L = \Delta_{\mathbb{S}^{n-2}} + 2 (n-1)$, then given $h 		\in C^{2}(\mathbb{S}^{n-2})$ the equality $\nabla_{\mathbb{S}^{n-2}} (L h^{2}) \equiv 0$ implies $Lh^{2} \equiv c$, 		for some constant $c \in \mathbb{R}$. The function $h^{2}$ can be written as $h^{2} = h_{1} + h_{2}$,  where $Lh_{1} 		\equiv c$, $h_{1}$ is the particular solution given by 
	\[
		h_{1}(x) = \frac{c}{2(n-1)} \sum_{i=1}^{n-1} x_{i}^{2} \equiv \frac{c}{2(n-1)} \text{ for $x \in \mathbb{S}				^{n-2}$ and}
	\]
	$Lh_{2} = 0$. The homogeneous solution $h_{2} \colon \mathbb{S}^{n-2} \to \mathbb{R}$, $h_{2} \in C^{2}(S^{n-2})$ 	satisfies
	\begin{align*}
		\Delta_{\mathbb{S}^{n-2}} h_{2} + 2 (n - 1) h_{2} &= 0\\
		- \Delta_{\mathbb{S}^{n-2}} h_{2}     		     &= 2 (n - 1)h_{2}
	\end{align*}
	and using the fact that the spherical harmonics are the eigenspaces of the Laplace-Beltrami operator, $\Delta_{\mathbb{S}		^{n-2}}$ \cite[p.74]{hG96}, we can conclude that $h_{2}$ is a spherical harmonic of degree $2$ in $\mathbb{R}^{n-1}$. 		Therefore, $h_{2}$ must be of the form $h_{2}(x) = x^{\tng}Ax$, $x \in \mathbb{S}^{n-2}$ for some trace-free symmetric 	matrix $A$. Then the function $h^{2}$ equals
	\begin{align*}
		h^{2}(x) &= h_{1}(x) + h_{2}(x) = \frac{c}{2(n-1)} \sum_{i=1}^{n-1} x_{i}^{2} + x^{\tng}Ax\\
			      &= x^{\tng} \left( \frac{c}{2(n-1)} I_{n-1} + A \right) x > 0 \quad \text{for all $x \in \mathbb{S}^{n-2}$ 					   and hence}\\
			   B&:= \frac{c}{2(n-1)} I_{n-1} + A
	\end{align*}
	is a positive definite symmetric matrix. The set $E :=\bigl\{x \in \mathbb{R}^{n-1} \colon x^{\tng}B^{-1}x = 1\bigr\}$ is 		therefore an origin centered ellipsoid. There exists $P \in \mathrm{O}(n-1)$ so that
	\[
		PBP^{-1} = \begin{pmatrix}
					\lambda_{1}^{2} & 0        & 0\\
					0                           & \ddots & 0\\
					0			  & 0         &\lambda_{n-1}^{2}
				\end{pmatrix} \quad B^{-1} = P^{-1} \begin{pmatrix}
												\lambda_{1}^{-2} & 0         & 0\\
												0                             & \ddots & 0\\ 
												0			    & 0         & \lambda_{n-1}^{-2}
										\end{pmatrix} P
	\]
	where $\lambda_{i}^{2} > 0$ for each $i = 1, \dotsc, n-1$ and the set $E$ equals
	\begin{align*}
		E &= \bigl\{ x \in \mathbb{R}^{n-1} \colon x^{\tng} P^{-1} \diag \bigl[\, \lambda_{1}^{-2}, \dotsc, \lambda_{n-1}				 ^{-2} \,\bigr] Px = 1\bigr\}\\
		   &= \bigl\{ x \in \mathbb{R}^{n-1} \colon (Px)^{\tng} \diag \bigl[\, \lambda_{1}^{-2}, \dotsc, \lambda_{n-1}					 ^{-2} \,\bigr] Px = 1\bigr\}\\
		   &= P^{-1} \bigl\{ x \in \mathbb{R}^{n-1} \colon x^{\tng} \diag \bigl[\, \lambda_{1}^{-2}, \dotsc, \lambda_{n-1}				 ^{-2} \,\bigr] x = 1\bigr\}\\
		   &= P^{-1} \left\{ x \in \mathbb{R}^{n-1} \colon \sum_{i=1}^{n-1} \frac{x_{i}^{2}}{\lambda_{i}^{2}} = 1 \right				\}\\
	      PE &= \left\{ x \in \mathbb{R}^{n-1} \colon \sum_{i=1}^{n-1} \frac{x_{i}^{2}}{\lambda_{i}^{2}} = 1 \right\}.
	\end{align*} 
	The support function of $PE$ is calculated in Example \ref{ES2.5:sptfnctellipsoid} as
	\[
		h_{PE}(x) = \left\{ \sum_{i=1}^{n-1} \lambda_{i}^{2}x_{i}^{2} \right\}
	\]
	and its square satisfies for all $x \in \mathbb{R}^{n-1}$
	\begin{align*}
		\bigl[ h_{PE}(x) \bigr]^{2} = \sum_{i=1}^{n-1} \lambda_{i}^{2} x_{i}^{2} &= (x^{\tng} P)B(P^{-1}x) = (P^{-1}								      									       x)^{\tng}B(P^{-1}x)\\
		\Rightarrow \bigl[ (h_{PE} \circ P)(x) \bigr]^{2} 						&= x^{\tng}Bx = h^{2}(x).				\end{align*}
	\[
		(h_{PE} \circ P)(x) = \max_{z \in PE} z \cdot Px = \max_{y \in E} Py \cdot Px = \max_{y \in E} y \cdot x = h_{E}			(x).
	\]
	So we can conclude that $h^{2}(x) = \bigl[ h_{E}(x) \bigr]^{2}$, $h_{E}$ is the support function of an origin centered 		ellipsoid and $h = h_{E}$ or $-h = h_{E}$, depending on whether the nonvanishing function $h$ is positive or negative.
	
	$(\Leftarrow)$: For any given $\lambda_{i} > 0$, $i = 1, \dotsc, n-1$, let
	\[
		E = \left\{ x \in \mathbb{R}^{n-1} \colon \sum_{i=1}^{n-1} \frac{x_{i}^{2}}{\lambda_{i}^{2}} = 1 \right\} =       			\bigl\{ x\in \mathbb{R}^{n-1} \colon x^{\tng} \diag \bigl[ \lambda_{1}^{-2}, \dotsc, \lambda_{n-1}^{-2} \bigr] x = 1 		\bigr\}
	\]
	be an origin centered ellipsoid with principal axes lying on the coordinate axes and $h_{E}$ the support function of $E$. 		We need to show that $\Delta_{\mathbb{S}^{n-2}} h_{E}^{2} + 2 (n-1)h_{E}^{2} \equiv c$ on $\mathbb{S}^{n-2}$ for 		some constant $c \in \mathbb{R}$. Since $h_{E}^{2}$ is positively homogeneous of degree $2$, using the expression of 		the Laplacian in polar coordinates we can compute for any $u \in \mathbb{S}^{n-2}$
	\begin{align*}
		\Delta_{\mathbb{S}^{n-2}} h_{E}^{2}(u) &= \Delta_{\mathbb{R}^{n-1}} h_{E}^{2}(u) - (n-2) \left. \frac{\partial}		{\partial r} \right\rvert_{r=1} h_{E}^{2}(ru) - \left. \frac{\partial^{2}}{\partial r^{2}} \right\rvert_{r=1} h_{E}^{2}			(ru)\\
		&= 2 \sum_{i=1}^{n-1} \lambda_{i}^{2} - 2 (n - 2) h_{E}^{2}(u) - 2 h_{E}^{2}(u)\\
		&= 2 \sum_{i=1}^{n-1} \lambda_{i}^{2} - 2 (n - 1) h_{E}^{2}(u)
	\end{align*}
	\[
		\Rightarrow \Delta_{\mathbb{S}^{n-2}} h_{E}^{2}(u) + 2 (n - 1) h_{E}^{2}(u) = 2 \sum_{i=1}^{n-1} \lambda_{i}			^{2}=: c > 0
	\]
	holds for all $u \in \mathbb{S}^{n-2}$. Thus we can conclude that 
	\[
		\nabla_{\mathbb{S}^{n-2}} \bigl[\, \Delta_{\mathbb{S}^{n-2}} h_{E}^{2} + 2(n-1) h_{E}^{2} \,\bigr] \equiv 0 			\quad \text{on $\mathbb{S}^{n-2}$.}
	\]
	A general ellipsoid $\widetilde{E}$ centered at the origin is of  the form $\widetilde{E} = P^{-1}E$, where $P \in 			\mathrm{O}(n-1)$ . According to the item \eqref{LS2.2:spfnpr5} of Lemma \ref{LS2.2:spfnpr}, the support function 			$h_{\widetilde{E}}$ satisfies
	\[
		h_{\widetilde{E}} = h_{P^{-1}E} = h_{E} \circ P.
	\]
	For each $u \in \mathbb{S}^{n-2}$, by applying the representation of $\Delta_{\mathbb{R}^{n-1}}$ in general coordinates  	to the particular case $(\mathbb{R}^{n-1}, P)$ we get 
	\[
		\Delta_{\mathbb{R}^{n-1}} (h_{E}^{2} \circ P)(u) = \Delta_{\mathbb{R}^{n-1}} h_{E}^{2}(Pu)
	\]
	and hence the spherical Laplacian of $h_{\widetilde{E}}^{2}$ equals
	\begin{align*}
		&\Delta_{\mathbb{S}^{n-2}} h_{\widetilde{E}}^{2}(u) = \Delta_{\mathbb{S}^{n-2}} (h_{E}^{2} \circ P)(u)\\
		&= \Delta_{\mathbb{R}^{n-1}}(h_{E}^{2} \circ P)(u) - (n-2) \left. \frac{\partial}{\partial r} \right\rvert_{r=1} 			h_{E}^{2}\bigl[rP(u)] - \left. \frac{\partial^{2}}{\partial r^{2}} \right\rvert_{r=1} h_{E}^{2}\bigl[rP(u)\bigr]\\
		&= \Delta_{\mathbb{R}^{n-1}} h_{E}^{2}(Pu) - 2(n-1) h_{E}^{2} (Pu) = \Delta_{\mathbb{S}^{n-2}} h_{E}^{2}			(Pu)
	\end{align*}
	Therefore, for each $u \in \mathbb{S}^{n-2}$ we can compute
	\begin{align*}
		\Delta_{\mathbb{S}^{n-2}} h_{\widetilde{E}}^{2}(u) + 2 (n-1) h_{\widetilde{E}}^{2}(u) &= \Delta_{\mathbb{S}													^{n-2}} h_{E}^{2} (Pu) + 2(n-1) h_{E}^{2}(Pu)\\
																		    &= 2 \sum_{i=1}^{n-1} 																				  \lambda_{i}^{2}.
	\end{align*}
	So we can conclude that $\nabla_{\mathbb{S}^{n-2}} \bigl[\, \Delta_{\mathbb{S}^{n-2}} h_{\widetilde{E}}^{2} + 2(n-1) 	h_{\widetilde{E}}^{2} \,\bigr] \equiv 0$ on $\mathbb{S}^{n-2}$.
\end{proof}

\subsubsection{The centrix}

\begin{definition}
	The centrix of an ovaloid $\mathcal{O}^{n-1} \subseteq \mathbb{R}^{n}$ is defined as the centrix of the strictly convex 		body $K$ that it bounds. Namely, following Definition \ref{DS2.2:centrix}, if $h$ is the support function of $K$, and 		hence of $\mathcal{O}$ by definition, then the centrix of $\mathcal{O}$ is defined as
	\begin{align*}
		&c \colon \mathbb{R}^{n} \setminus \{0\} \to \mathbb{R}^{n}\\
		&c(u)  = \frac{\nabla_{\mathbb{R}^{n}} h(u) + \nabla_{\mathbb{R}^{n}} h(-u)}{2} = \frac{\nabla_{\mathbb{R}				      ^{n}} h(\bar{u}) + \nabla_{\mathbb{R}^{n}} h(-\bar{u})}{2}\\
		&\phantom{c(u)} = \frac{\Gamma (\bar{u}) + \Gamma (- \bar{u})}{2}	
	\end{align*}
	where $\Gamma \colon \mathbb{S}^{n-1} \to \mathcal{O}$ is the support parameterization of $\mathcal{O}$, $\bar{u} = 		\frac{u}{\lvert u \rvert}$, and the second the equality follows because $\nabla_{\mathbb{R}^{n}} h$ is positively 			homogeneous of degree zero.
\end{definition}

\begin{definition}
	Given the support parameterization $\Gamma$ of an ovaloid $\mathcal{O}$, we call the maps
	\begin{align*}
		\Gamma^{+}(u) &= \frac{\Gamma(u) + \Gamma(- u)}{2}\\
		\Gamma^{-}(u) &= \frac{\Gamma(u) - \Gamma(- u)}{2}
	\end{align*}
	the even and odd parts of $\Gamma$, respectively.
\end{definition}

\begin{lemma} \label{LS5.1:oddsptparam}
	The centrix $c \colon \mathbb{S}^{n-1} \to \mathbb{R}^{n}$, when restricted to $\mathbb{S}^{n-1}$, coincides with $		\Gamma^{+}$. The centrix is constant if and only if $\mathcal{O}$ has central symmetry. In that case, $\Gamma^{-}$ 		support parameterizes the origin centered ovaloid $\mathcal{O} - c_{0}$, where $c \equiv c_{0}$.
\end{lemma}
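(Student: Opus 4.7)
All three claims flow from the support parameterization identity $\Gamma(u) = \nabla_{\mathbb{R}^{n}} h(u)$ for $u \in \mathbb{S}^{n-1}$ (equation \eqref{ES3.2:sptparam}), together with the following bridge: the identity $\Gamma(-u) = 2c_{0} - \Gamma(u)$ for every $u \in \mathbb{S}^{n-1}$ is simultaneously equivalent to $c \equiv c_{0}$ and to central symmetry of $\mathcal{O}$ about $c_{0}$. Once this bridge is in place, the lemma essentially unwinds.

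For the first statement $c|_{\mathbb{S}^{n-1}} = \Gamma^{+}$, I would plug $\Gamma(\pm u) = \nabla_{\mathbb{R}^{n}} h(\pm u)$ directly into the definition of the centrix; this is a one-line check, and it simultaneously shows that constancy of $c$ at $c_{0}$ is equivalent to the identity $\Gamma(u) + \Gamma(-u) \equiv 2c_{0}$.

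For the equivalence with central symmetry, one implication is immediate: if $\Gamma(u) + \Gamma(-u) \equiv 2c_{0}$, then the involution $\sigma(p) = 2c_{0} - p$ sends each $\Gamma(u) \in \mathcal{O}$ to $\Gamma(-u) \in \mathcal{O}$, so $\mathcal{O}$ is centrally symmetric about $c_{0}$. Conversely, assume $\mathcal{O}$ is centrally symmetric about $c_{0}$. Then $\sigma$ fixes the strictly convex body $K$ bounded by $\mathcal{O}$ (Hadamard), and since $d\sigma = -I$ and $\sigma$ preserves the interior of $K$, tracking an inward vector at $p$ through $d\sigma$ forces $\sigma$ to reverse the outer unit normal: $N_{\sigma(p)} = -N_{p}$. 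Consequently $\Gamma(-u) = \sigma(\Gamma(u)) = 2c_{0} - \Gamma(u)$, which yields $c \equiv c_{0}$.

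Finally, assuming $c \equiv c_{0}$, the bridge identity rewrites $\Gamma^{-}(u) = \Gamma(u) - c_{0}$. Because translation by $-c_{0}$ preserves outer unit normals pointwise, the inverse Gauss map of $\mathcal{O} - c_{0}$ at a unit direction $u$ is exactly $\Gamma(u) - c_{0}$, so $\Gamma^{-}$ support parameterizes the origin-centered ovaloid $\mathcal{O} - c_{0}$. The only step that goes beyond pure definition-chasing is the normal-reversal computation in the converse direction above, and that is the main (though routine) obstacle; everything else is a direct consequence of the support parameterization formula and the definition of the centrix.
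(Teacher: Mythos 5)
Your proposal is correct and follows essentially the same route as the paper: identify $c|_{\mathbb{S}^{n-1}}$ with $\Gamma^{+}$ via $\Gamma(u)=\nabla_{\mathbb{R}^{n}}h(u)$, use the reflection $x\mapsto 2c_{0}-x$ for both directions of the equivalence, and obtain the last claim from the fact that translating by $-c_{0}$ shifts the support parameterization by $-c_{0}$ (the paper does this via $h_{\mathcal{O}-c_{0}}(u)=h_{\mathcal{O}}(u)-u\cdot c_{0}$ rather than by tracking normals, but these are equivalent). Your explicit justification that the reflection reverses outer unit normals is a detail the paper leaves implicit, and it is a welcome addition.
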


\begin{proof}
	The first claim follows directly from the definition of $c$ and $\Gamma^{+}$. To prove the second claim, assume first that 	$c = c_{0}$ is constant and consider the map
	\begin{align*}
		R_{c_{0}} \colon \mathbb{R}^{n} &\to \mathbb{R}^{n}\\
		x					             &\mapsto 2c_{0} - x 
	\end{align*}
	which is the reflection through $c_{0}$. Given $u \in \mathbb{S}^{n-1}$
	\[
		R_{c_{0}} \bigl[\, \Gamma(u) \,\bigr] = 2 c_{0} - \Gamma(u) = 2 \frac{\Gamma(u) + \Gamma(-u)}{2} - \Gamma(u) 			=  \Gamma(-u).
	\]
	Since $u \in \mathbb{S}^{n-1}$ is arbitrary, we can conclude that $R_{c_{0}}(\mathcal{O}) = \mathcal{O}$ and $			\mathcal{O}$ has central symmetry with center $c_{0}$. On the other hand, if $\mathcal{O}$ is central with center 			$c_{0}$, then for every $u \in \mathbb{S}^{n-1}$
	\[
		c_{0} = \frac{\Gamma(u) + R_{c_{0}} \bigl[\, \Gamma(u) \,\bigr]}{2} = \frac{\Gamma(u) + \Gamma(-u)}{2} = c(u)
	\]
	end hence the centrix $c$ is constant. In order to show the last claim assume that $\mathcal{O}$ has the center of symmetry 	$c_{0}$ and
	\begin{align*}
		&h_{\mathcal{O}} \text{ is the support function of $\mathcal{O}$,}\\
		&\Gamma_{\mathcal{O}} \text{ is the support parameterization of $\mathcal{O}$,}\\
		&h_{\mathcal{O} - c_{0}} \text{ is the support function of $\mathcal{O} - c_{0}$,}\\
		&\Gamma_{\mathcal{O} - c_{0}} \text{ is the support parameterization of $\mathcal{O} - c_{0}$.}
	\end{align*}
	According to the item \eqref{LS2.2:spfnpr6}of Lemma \ref{LS2.2:spfnpr}, for every $u \in \mathbb{S}^{n-1}$, 			the support parameterization of $h_{\mathcal{O} - c_{0}}$ satisfies $h_{\mathcal{O}- c_{0}}(u) = h_{\mathcal{O}}(u) - 	u \cdot c_{0}$ and hence
	\begin{align*}
		\Gamma_{\mathcal{O} - c_{0}}(u) &= \nabla_{\mathbb{R}^{n}} h_{\mathcal{O} - c_{0}}(u) = 													  \nabla_{\mathbb{R}^{n}} h_{\mathcal{O}(u)} - c_{0}\\
								     &= \Gamma_{\mathcal{O}}(u) - c_{0} = \Gamma_{\mathcal{O}}(u) - 											   \frac{\Gamma_{\mathcal{O}}(u) + \Gamma_{\mathcal{O}}(-u)}{2}\\
								     &= \frac{\Gamma_{\mathcal{O}}(u) - \Gamma_{\mathcal{O}}(-u)}{2} = 										   \Gamma_{\mathcal{O}}^{-}(u).
	\end{align*}
	So we can conclude that the odd part $\Gamma_{\mathcal{O}}^{-}$ of $\Gamma_{\mathcal{O}}$ support parameterizes  	the origin centered ovaloid $\mathcal{O} - c_{0}$.
\end{proof}

\subsection{Splitting}

In this subsection, our goal is to show that the rectification $\mathcal{T}^{-}$ of a transversely convex tube $\mathcal{T}$ with \emph{cop} is either cylindrical or quadric.

\subsubsection{The support map and the height function} 

Our goal is to reparameterize the tube $\mathcal{T}$ with the use of the support map. Using this new parameterization we will  construct the height function $z$ that lets us parameterize nearly horizontal ovaloid cross-sections as graphs over horizontal ovaloid cross-sections.

\begin{definition}
	Given any $\epsilon \in \mathbb{R}$, $\tau \in \mathbb{S}^{n-2}$, the $\epsilon$-tilted hyperplane is given by
	\[
		P_{\tau, z_{0}}(\epsilon) = \bigl\{ (p, z) \in \mathbb{R}^{n-1} \times \mathbb{R} \colon z = \epsilon (p \cdot \tau) + 		z_{0} \bigr\}
	\]
	where $\tau$ is called the tilt direction, $z_{0}$ is the $e_{n}$-intercept, and $\epsilon$ is the slope of this hyperplane 		along $\tau$ direction
\end{definition}

\begin{definition}[The support map of $\mathcal{T}$]
	A transversely convex tube $\mathcal{T}$ in \emph{standard position} can be reparameterized as follows:
	\begin{align*}
		&Y \colon \mathbb{S}^{n-2} \times I \to \mathcal{T}\\
		&Y(u, z) = \bigl(\, \Gamma(u, z), z \,\bigr)
	\end{align*}
	where $\Gamma \colon \mathbb{S}^{n-2} \times I \to \mathbb{R}^{n-1}$ is a smooth map such that for every $z \in I$, the 	map $\Gamma( \cdot, z) $ support parameterizes the ovaloid $\mathcal{O}(z) - (0, z)$, where $\mathcal{O}(z) := 			\mathcal{T} \cap H_{e_{n}, z}$ is the ovaloid with center of mass at $(c(z), z)$. The smooth map $\Gamma$ is called the 		support map of $\mathcal{T}$. In particular, the construction of the map $\Gamma$ is carried out as follows:
	
	Define the maps
	\begin{align*}
		\nu \colon \mathcal{T} \to \mathbb{S}^{n-2}&\\
		p = (p', p_{n})               &\mapsto \nu(p) = \text{outer unit normal in $\mathbb{R}^{n-1}$ to the ovaloid}\\
						  &\phantom{\mapsto \nu(p) = {}} \mathcal{O}(p_{n}) - (c(p_{n}), p_{n}) \subseteq								     \mathbb{R}^{n-1} \times \{0\} \simeq \mathbb{R}^{n-1}\\
						  &\phantom{\mapsto \nu(p) = {}} \text{at } (p' - c(p_{n}), 0) \simeq p' - c(p_{n})
	\end{align*}
	and $\bar{\nu} \colon \mathcal{T} \to \mathbb{S}^{n-2} \times I$, $p \mapsto (\nu(p), p_{n})$. 
	
	\begin{figure}[h]
		\centering
		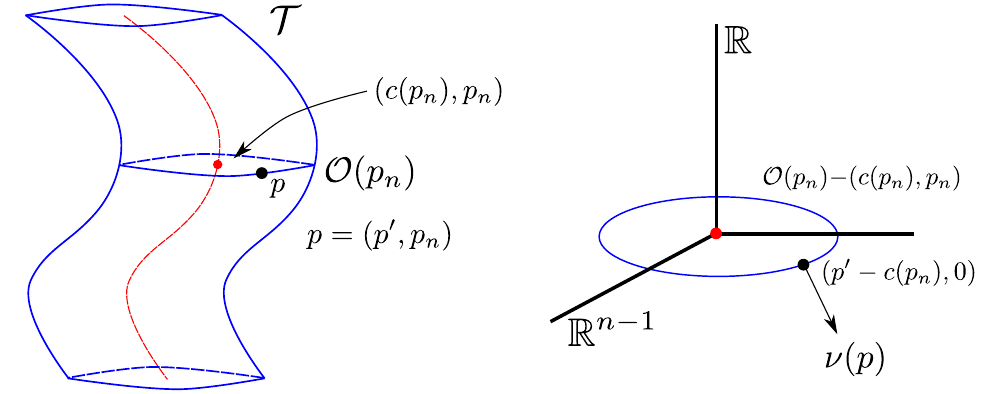
		\caption{Outer unit normal field $\nu$}
	\end{figure}
	
	Using the parameterization $X$ of $\mathcal{T}$, which is defined in the equation \eqref{E:trcvxtstr}, and Remark 			\ref{RS3.2:outrunitnorfld} we can write
	
	\begin{align*}
		(\bar{\nu} \circ X)(u, z) &= \bar{\nu} \bigl[\, X(u, z) \,\bigr] = \bar{\nu} \bigl(\, c(z) + \alpha(u, z), z \,\bigr)\\
						   &= \left ( \parbox{4.5cm}{\centering{outer unit normal in $\mathbb{R}^{n-1}$ to the 									 ovaloid $\mathcal{O}(z) - (c(z), z)$ at $(\alpha(u, z), 0)$}}, \quad z \right)\\
						   &= \left( \frac{\star \Bigl[\, \bigwedge \nolimits_{n-2} \partial_{1} \alpha(u, z) \,\Bigr] 									\Bigl( \frac{\partial}{\partial u_{1}} \wedge \cdots \wedge \frac{\partial}{\partial 									u_{n-2}} \Bigr)}{\Bigl\lvert  \star \Bigl[\, \bigwedge \nolimits_{n-2} \partial_{1} 									\alpha(u, z) \,\Bigr] \Bigl( \frac{\partial}{\partial u_{1}} \wedge \cdots \wedge 									\frac{\partial}{\partial u_{n-2}} \Bigr) \Bigr\rvert}, \quad z \right)
	\end{align*}
	The map
	\[
		(u, z) \mapsto \frac{\star \Bigl[\, \bigwedge \nolimits_{n-2} \partial_{1} \alpha(u, z) \,\Bigr] \Bigl( \frac{\partial}			{\partial u_{1}} \wedge \cdots \wedge \frac{\partial}{\partial u_{n-2}} \Bigr)}{\Bigl\lvert  \star \Bigl[\, \bigwedge 			\nolimits_{n-2} \partial_{1} \alpha(u, z) \,\Bigr] \Bigl( \frac{\partial}{\partial u_{1}} \wedge \cdots \wedge 				\frac{\partial}{\partial u_{n-2}} \Bigr) \Bigr\rvert}
	\]
	is smooth because $\star$ is an $(n-2)$-linear map and $\partial_{1} \alpha$ is smooth. The map $\bar{\nu} \circ X$ is 		bijective because for every $z \in I$ the ovaloid $\alpha(\mathbb{S}^{n-2}, z)$ has a globally defined unit normal field, and 	the differential has a maximal rank because for every $z \in I$
	\[
		u \mapsto \frac{\star \Bigl[\, \bigwedge \nolimits_{n-2} \partial_{1} \alpha(u, z) \,\Bigr] \Bigl( \frac{\partial}{\partial 			u_{1}} \wedge \cdots \wedge \frac{\partial}{\partial u_{n-2}} \Bigr)}{\Bigl\lvert  \star \Bigl[\, \bigwedge 					\nolimits_{n-2} \partial_{1} \alpha(u, z) \,\Bigr] \Bigl( \frac{\partial}{\partial u_{1}} \wedge \cdots \wedge 				\frac{\partial}{\partial u_{n-2}} \Bigr) \Bigr\rvert}	
	\]
	is a diffemorphism, which follows from a simple observation in the case $n = 3$ (oval), and from Hadamard's theorem 		\cite[p.41]{jH98, sKkN69} in the case $n \geq 4$. Using the inverse function theorem we can conclude that $\bar{\nu} \circ 	X$ is a diffeomorphism and we can define the reparameterization $Y$ of $\mathcal{T}$ as follows:
	\begin{align*}
		&Y \colon \mathbb{S}^{n-2} \times I \to \mathcal{T}\\
		&Y = X \circ (\bar{\nu} \circ X)^{-1}
	\end{align*}
	$Y(\bar{u}, z) = X \bigl[ (\bar{\nu} \circ X)^{-1} (\bar{u}, z) \bigr] = X(u, z) = \bigl( c(z) + \alpha(u, z), z \bigr)$ where
	\[
		\bar{u} = \frac{\star \Bigl[\, \bigwedge \nolimits_{n-2} \partial_{1} \alpha(u, z) \,\Bigr] \Bigl( \frac{\partial}{\partial 			u_{1}} \wedge \cdots \wedge \frac{\partial}{\partial u_{n-2}} \Bigr)}{\Bigl\lvert  \star \Bigl[\, \bigwedge 					\nolimits_{n-2} \partial_{1} \alpha(u, z) \,\Bigr] \Bigl( \frac{\partial}{\partial u_{1}} \wedge \cdots \wedge 				\frac{\partial}{\partial u_{n-2}} \Bigr) \Bigr\rvert}
	\]
	is the outer unit normal in $\mathbb{R}^{n-1} \simeq \mathbb{R}^{n-1} \times \{0\}$ to  the ovaloid $\mathcal{O}(z) - 		(c(z), z)$ at $(\alpha(u, z), 0)$. Or equivalently, $\bar{u}$ is the outer unit normal to the ovaloid $\mathcal{O}(z) - (0, z)$ 		at the point $(c(z) + \alpha(u, z), z)$.Therefore, if we define the map $\Gamma \colon \mathbb{S}^{n-2} \times I \to 			\mathbb{R}^{n-1}$ as
	\[
	 	\Gamma(\bar{u}, z) := c(z) + \alpha(u, z) 	
	\]
	 we can conclude that $\Gamma(\cdot, z)$ support parameterizes the ovaloid $ \mathcal{O}(z) -(0, z)$ for each $z \in I$ and 
	\[
		Y(\bar{u}, z) = \bigl(\, \Gamma(\bar{u}, z), z \,\bigr)
	\]
	for each $(\bar{u}, z) \in \mathbb{S}^{n-2} \times I$.
\end{definition}

\begin{definition}[The height function $z$]
	For each $z_{0} \in I$, $\epsilon \in \mathbb{R}$, $\tau \in \mathbb{S}^{n-2}$, we define the cross-section
	\begin{align*}
		\overline{\mathcal{O}}_{\tau}(z_{0}, \epsilon) &= \mathcal{T} \cap P_{\tau, z_{0}}(\epsilon)\\
	\intertext{and its image under the projection $(x_{1}, \dotsc, x_{n-1}, x_{n}) \mapsto (x_{1}, \dotsc, x_{n-1})$}
		\mathcal{O}_{\tau}(z_{0}, \epsilon) &\subseteq \mathbb{R}^{n-1} \times \{0\} \simeq \mathbb{R}^{n-1}.
	\end{align*}
	The horizontal $(\epsilon = 0)$ cross-section, for each $\tau \in \mathbb{S}^{n-2}$, satisfies
	\[
		\mathcal{O}(z_{0}) = \overline{\mathcal{O}}_{\tau}(z_{0}, 0) \equiv \mathcal{O}_{\tau}(z_{0}, 0).
	\]
\end{definition}

Because of the transverse convexity of $\mathcal{T}$ and the proof of Lemma \ref{LS3.3:exsttrncvxtube}, given any 		$z_{0} \in I$, there exists $\delta_{0} > 0$ so that for every $\tau \in \mathbb{S}^{n-2}$, $\lvert \epsilon \rvert < \delta_{0}$, $\overline{\mathcal{O}}_{\tau}(z_{0}, \epsilon)$ is again an ovaloid. With this remark, our objective now is to show that for any given $z_{0} \in I$ there exist $\delta> 0$ and a height function $z(\epsilon, u)$ so that for every $\lvert \epsilon \rvert < \delta$,  $z(\epsilon, \cdot)$ lets us parameterize the ovaloid $\overline{\mathcal{O}}_{\tau}(z_{0}, \epsilon)$ by the map
\begin{align*}
	\mathbb{S}^{n-2} &\to \mathbb{R}^{n}\\
	u 			    &\mapsto \bigl(\, \Gamma \bigl( u, z(\epsilon, u) \bigr), z(\epsilon, u) \,\bigr).
\end{align*}

We define the smooth map $G \colon \mathbb{R} \times \mathbb{S}^{n-2} \times I \to \mathbb{R}$ as
\[
	G(\epsilon, u, z) = z - z_{0} - \epsilon \tau \cdot \Gamma(u, z).
\]
Consider $\lvert \epsilon \rvert < \delta_{0}$, and observe that
\begin{equation} \label{ES5.2:paramrmk}
	\begin{aligned}
		&(u, z) \in Y^{-1} \bigl[\, \overline{\mathcal{O}}_{\tau}(z_{0}, \epsilon) \,\bigr] \iff Y(u, z) \in						    \overline{\mathcal{O}}_{\tau}(z_{0}, \epsilon)\\
		& \iff \bigl(\, \Gamma(u, z), z \,\bigr) \in \mathcal{T} \cap P_{\tau, z_{0}}(\epsilon)\\
		& \iff \bigl(\, \Gamma(u, z), z \,\bigr) \in P_{\tau, z_{0}}(\epsilon)\\
		& \iff z = z_{0} + \epsilon \bigl[ \tau \cdot \Gamma(u, z) \bigr]\\
		& \iff G(\epsilon, u, z) = 0. 
	\end{aligned}
\end{equation}

Note that $G(0, u, z_{0}) = 0$ for each $u \in \mathbb{S}^{n-2}$ and 
\[
	\frac{\partial G}{\partial z}(\epsilon, u, z) = 1 - \epsilon \tau \cdot \frac{\partial \Gamma}{\partial z}(u, z) \quad \text{for all 	$z  \in I$ implies} \quad  \frac{\partial G}{\partial z} (0, u, z_{0}) = 1.
\]

\begin{figure}[h]
	\centering
	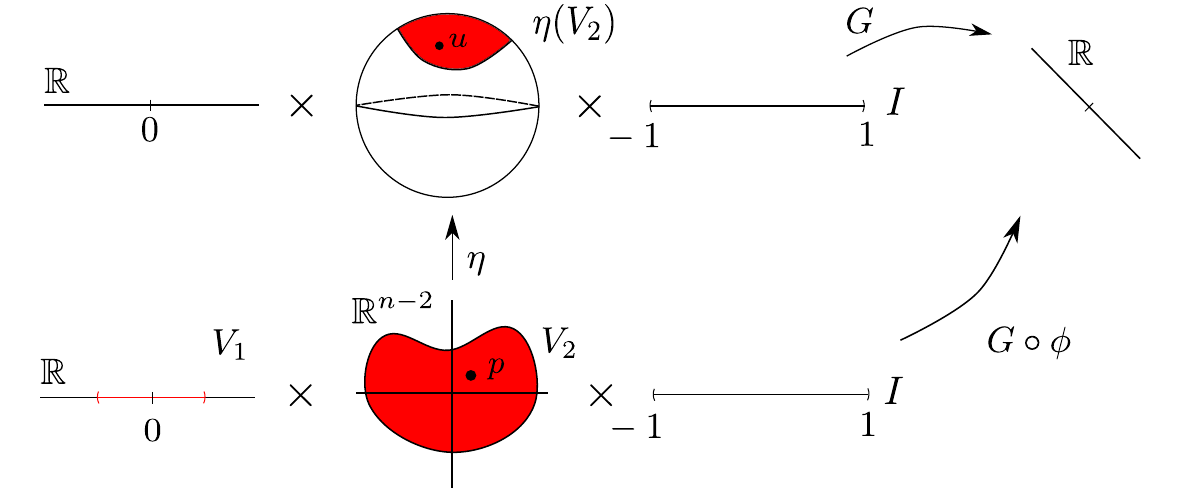
	\caption{Implicit function $G \circ \phi$}
\end{figure}

Fix a local parameterization $(V, \eta)$ of $\mathbb{S}^{n-2}$ and let $\phi = \mathrm{id}_{\mathbb{R}} \times \eta \times \mathrm{id}_{I}$, then the map $G \circ \phi$ satisfies 
\begin{align*}
	&G \circ \phi \colon \mathbb{R} \times V \times I \to \mathbb{R}\\
	&G \circ \phi ( \epsilon, v, z) = z - z_{0} - \epsilon \tau \cdot \Gamma \bigl( \eta(v), z\bigr)\\
	&G \circ \phi (0, v, z_{0}) = 0 \quad \text{for all $v \in V$ and }\\
	&\frac{\partial (G \circ \phi)}{\partial z}(0, v, z_{0}) = 1 \quad \text{for all $v \in V$.}
\end{align*}
Therefore, the implicit function theorem applied to $G \circ \phi$ yields the existence of an open neighborhood of  $(0, v) \in V_{1} \times V_{2} \subseteq \mathbb{R} \times \mathbb{R}^{n-2} = \mathbb{R}^{n-1}$, where $V_{1} \subseteq (-\delta_{0}, \delta_{0})$ is an open interval about $0$, $V_{2}$ is an open subset of $V$, and a unique smooth function $g \colon V_{1} \times V_{2} \to \mathbb{R}$ such that
\begin{align*}
	g(0, v) &= z_{0}\\
	G \circ \phi \bigl[ \epsilon, v, g(\epsilon, v) \bigr] 			&= 0 \quad \text{for all } \epsilon \in V_{1} \text{ and } v \in 												      V_{2}\\
	\Rightarrow G \circ \phi \bigl[ \epsilon, v, g(\epsilon, v) \bigr] &= g( \epsilon, v) - z_{0} - \epsilon \tau \cdot \Gamma 													       \bigl( \eta(v), g(\epsilon, v) \bigr) = 0\\
	G \circ \phi \bigl[ 0, v, g(0, v) \bigr] 					&= g(0, v) - z_{0} = 0\\
	g(0, v) 										&= z_{0} \quad \text{for all $v \in V_{2}$.}
\end{align*}
So there exists a unique smooth function $\bar{g} \colon V_{1} \times \eta(V_{2}) \to \mathbb{R}$ defined as
\begin{align*}
	\bar{g}(\epsilon, u) &= g \circ (\mathrm{id}_{\mathbb{R}} \times \eta^{-1})(\epsilon, u) \quad \text{and satisfies}\\
	\bar{g}(0, u) &= z_{0} \quad \text{for every $u \in \eta(V_{2}) = U_{2}$ and} \\
	G\bigl(\epsilon, u, \bar{g}(\epsilon, u)\bigr) &= 0 \quad \text{for every $\epsilon \in V_{1}$, $u \in \eta(V_{2}) = U_{2}$.}
\end{align*}
Since $\mathbb{S}^{n-2}$ can be covered by finitely many overlapping open sets $U_{2}^{k}$, $k = 1, \dotsc, M$, by taking the intersection $\cap_{k=1}^{M}V_{1}^{k}$ , using the uniqueness of the smooth function
\[
	\bar{g}_{k} \colon V_{1}^{k} \times U_{2}^{k} \to \mathbb{R} \quad k = 1, \dotsc, M
\]
and by letting $(-\delta, \delta) = \cap_{k=1}^{M} V_{1}^{k}$ we can construct a smooth function
\begin{equation} \label{ES5.2:impfctdom}
	z \colon (-\delta, \delta) \times \mathbb{S}^{n-2} \to \mathbb{R}
\end{equation}
that satisfies
\begin{gather}
	z(0, u) = z_{0} \quad \text{for all $u \in \mathbb{S}^{n-2}$}\notag\\
	G(\epsilon, u, z) = 0 \iff z = z(\epsilon, u) \quad \text{for all $\lvert \epsilon \rvert < \delta$ and $u \in \mathbb{S}^{n-2}$}		\notag\\
	z(\epsilon, u) = z_{0} + \epsilon \tau \cdot \Gamma \bigl( u, z(\epsilon, u) \bigr) \quad \text{for all $\lvert \epsilon \rvert <		\delta$ and $u \in \mathbb{S}^{n-2}$.} \label{ES5.2:impfct} 
\end{gather}
Using the observation in \eqref{ES5.2:paramrmk} we get for every $\lvert \epsilon \rvert < \delta$,
\begin{align*}
	(u, z) \in Y^{-1} \bigl[\, \overline{\mathcal{O}}_{\tau}(z_{0}, \epsilon) \,\bigr] &\iff G(\epsilon, u, z) = 0\\
														        &\iff z = z(\epsilon, u)
\end{align*}
and hence the ovaloid $\overline{\mathcal{O}}_{\tau}(z_{0}, \epsilon)$ can be parameterized as
\begin{align*}
	\mathbb{S}^{n-2} &\to \mathbb{R}^{n}\\
	u &\mapsto Y\bigl( u, z(\epsilon, u) \bigr) = \bigl(\, \Gamma \bigl(u, z(\epsilon, u) \bigr), z(\epsilon, u) \,\bigr).
\end{align*}
Therefore, the projected ovaloid $\mathcal{O}_{\tau}(z_{0}, \epsilon)$, for each $\tau \in \mathbb{S}^{n-2}$ and $\lvert \epsilon \rvert < \delta$, can be parameterized as
\begin{align*}
	\mathbb{S}^{n-2} &\to \mathbb{R}^{n-1}\\
	u 			    &\mapsto \Gamma \bigl( u, z(\epsilon, u) \bigr)
\end{align*}
Using the equation \eqref{ES5.2:impfct} we can compute the derivative for each $\lvert \epsilon \rvert < \delta$ and $u \in \mathbb{S}^{n-2}$ as
\begin{align}
		\frac{\partial z}{\partial \epsilon}(\epsilon, u) &= \tau \cdot \Gamma \bigl( u, z(\epsilon, u) \bigr) + \epsilon \left( \tau 		\cdot \frac{\partial}{\partial \epsilon} \Gamma \bigl(u, z(\epsilon, u) \bigr) \right) \notag\\
		\intertext{if we let $\epsilon = 0$ the derivative simplifies to}
		\frac{\partial z}{\partial \epsilon}(0, u) &= \tau \cdot \Gamma \bigl( u, z(0, u) \bigr) = \tau \cdot \Gamma(u, z_{0}) 			\quad \text{for each $u \in \mathbb{S}^{n-2}$.} \label{ES5.2:drvofimpfnc}
\end{align}

\subsubsection{The support reparameterizing map $\theta_{\epsilon}$} 

Using a diffeomorphism of the sphere $\mathbb{S}^{n-2}$ we will obtain the support parameterization of the projected ovaloid $\mathcal{O}_{\tau}(z, \epsilon)$ for each $z \in I$ and small enough $\lvert \epsilon \rvert$.

\begin{definition}
	Given $\tau \in \mathbb{S}^{n-2}$, we define the map $\tau^{\tng} \colon \mathbb{S}^{n-2} \to \mathbb{R}^{n-1}$ as
	\begin{equation} \label{DS5.2:tautng}
		\begin{aligned}
			u \in \mathbb{S}^{n-2} \mapsto \tau^{\tng}(u) &= \text{orthogonal projection of $\tau$ onto $\tng_{u} 					\mathbb{S}^{n-2}$}\\
											     &= \tau - (\tau \cdot u)u.
		\end{aligned}
	\end{equation}
\end{definition}

\begin{proposition} \label{PS5.2:sptreparam}
	Given $z_{0} \in I$ and $\tau \in \mathbb{S}^{n-2}$ there exist $\delta > 0$ and a differentiable $1$-parameter family of 		diffeomorphisms
	\[
		\theta_{\epsilon} \colon \mathbb{S}^{n-2} \to \mathbb{S}^{n-2}, \quad  - \delta < \epsilon < \delta
	\]
	such that given the map
	\begin{align*}
		\Gamma_{\epsilon} \colon \mathbb{S}^{n-2}            &\to \mathbb{R}^{n-1}\\
		u 								   	     &\mapsto \Gamma \bigl( u, z(\epsilon, u) \bigr), \text{the 														composition}\\
		\Gamma_{\epsilon} \bigl( \theta_{\epsilon}(u) \bigr) &= \Gamma \bigl(\, \theta_{\epsilon}(u), z\bigl(\epsilon, 				\theta_{\epsilon}(u)\bigr) \,\bigr)
	\end{align*}
	support parameterizes $\mathcal{O}_{\tau}(z_{0}, \epsilon)$ for each $\lvert \epsilon \rvert < \delta$. The initial map $		\theta_{0}$ is the identity map on $\mathbb{S}^{n-2}$, with the initial $\epsilon$-derivative given by
	\begin{align*}
		\left. \frac{\partial \theta_{\epsilon}}{\partial \epsilon} (u) \right\rvert_{\epsilon = 0} &= \bigl[\, u \cdot 					(\partial_{2}\Gamma)(u, z_{0})(1) \,\bigr]  \tau^{\tng}(u)\\
		&= \left( u \cdot \frac{\partial \Gamma}{\partial z}(u, z_{0}) \right)  \tau^{\tng}(u).
	\end{align*} 
\end{proposition}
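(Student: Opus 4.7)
The plan is to define $\theta_\epsilon$ as the inverse of an auxiliary map $\nu_\epsilon \colon \mathbb{S}^{n-2} \to \mathbb{S}^{n-2}$ that records the outer unit normal of the projected cross-section along $\Gamma_\epsilon$, and then verify both the diffeomorphism claim and the $\epsilon$-derivative formula by implicit differentiation at $\epsilon = 0$. Concretely, by Lemma \ref{LS3.3:exsttrncvxtube} (together with Hadamard's theorem) the projected section $\mathcal{O}_\tau(z_0, \epsilon)$ is an ovaloid for each small $|\epsilon|$, so its outer Gauss map $N_\epsilon$ is a diffeomorphism onto $\mathbb{S}^{n-2}$. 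Set $\nu_\epsilon := N_\epsilon \circ \Gamma_\epsilon$. Since $z(0, u) \equiv z_0$, we have $\Gamma_0 = \Gamma(\cdot, z_0)$, which is by construction the support parameterization of $\mathcal{O}(z_0) = \mathcal{O}_\tau(z_0, 0)$; hence $\nu_0 = \mathrm{id}_{\mathbb{S}^{n-2}}$. Smoothness of $\nu_\epsilon$ in $(\epsilon, u)$, together with the openness of the diffeomorphism group in $C^1(\mathbb{S}^{n-2}, \mathbb{S}^{n-2})$ \cite[p.38]{mH76}, then shows $\nu_\epsilon$ remains a diffeomorphism for $|\epsilon|$ in some interval $(-\delta, \delta)$. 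Putting $\theta_\epsilon := \nu_\epsilon^{-1}$ gives a smooth one-parameter family with $\theta_0 = \mathrm{id}$, and by construction $\Gamma_\epsilon \circ \theta_\epsilon$ is the support parameterization of $\mathcal{O}_\tau(z_0, \epsilon)$.

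For the derivative formula, I would differentiate $\nu_\epsilon \circ \theta_\epsilon = \mathrm{id}$ at $\epsilon = 0$ and use $d\nu_0 = \mathrm{id}$ to get
\[
    \left.\frac{\partial \theta_\epsilon}{\partial \epsilon}\right|_{\epsilon = 0}(u) = -\dot n(u), \qquad \dot n(u) := \left.\frac{\partial \nu_\epsilon}{\partial \epsilon}\right|_{\epsilon = 0}(u).
\]
To compute $\dot n(u)$, I would exploit the two defining relations $\nu_\epsilon(u) \cdot \partial_i \Gamma_\epsilon(u) = 0$ (for each tangent index $i$ on $\mathbb{S}^{n-2}$) and $|\nu_\epsilon(u)| = 1$. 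Differentiating the first in $\epsilon$ at $0$, and substituting the identities $\partial_\epsilon z(0, u) = \tau \cdot \Gamma(u, z_0)$ from \eqref{ES5.2:drvofimpfnc}, $\partial_i z(0, \cdot) \equiv 0$, and $u \cdot \partial_i \Gamma(u, z_0) \equiv 0$ (tangent vectors of the support parameterization are perpendicular to $u$), along with the $z$-derivative $u \cdot \partial_i \partial_z \Gamma(u, z_0) \equiv 0$ of the last identity, I expect all terms to cancel except
\[
    \dot n(u) \cdot \partial_i \Gamma(u, z_0) = -\bigl[u \cdot \partial_z \Gamma(u, z_0)\bigr]\bigl[\tau \cdot \partial_i \Gamma(u, z_0)\bigr].
\]

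Finally, orthogonally splitting $\tau = \tau^{\tng}(u) + (\tau \cdot u)u$ via \eqref{DS5.2:tautng}, the right-hand side depends only on $\tau^{\tng}(u)$ because $\partial_i \Gamma(u, z_0) \perp u$. Since $\{\partial_i \Gamma(u, z_0)\}$ spans the tangent space $T_u \mathbb{S}^{n-2}$ (both subspaces are the $(n-2)$-plane in $\mathbb{R}^{n-1}$ orthogonal to $u$), and $\dot n(u), \tau^{\tng}(u)$ both lie in $T_u \mathbb{S}^{n-2}$, the tangential equation above forces
\[
    \dot n(u) = -\bigl[u \cdot \partial_z \Gamma(u, z_0)\bigr]\, \tau^{\tng}(u),
\]
and negating yields the asserted formula for $\partial_\epsilon \theta_\epsilon|_{\epsilon=0}(u)$. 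The main obstacle is the implicit-differentiation bookkeeping: one has to track which terms vanish at $\epsilon = 0$ (by virtue of $z(0,\cdot) \equiv z_0$) and which vanish tangentially (by virtue of $u \cdot \partial_i \Gamma \equiv 0$ and its $z$-derivative), and only after these cancellations does the tangential/normal decomposition of $\tau$ cleanly identify $\dot n(u)$ as a scalar multiple of $\tau^{\tng}(u)$.
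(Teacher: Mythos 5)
Your construction of $\theta_\epsilon$ as the inverse of the normal map $\nu_\epsilon = N_\epsilon \circ \Gamma_\epsilon$, with $\nu_0 = \mathrm{id}$ forced by $z(0,\cdot)\equiv z_0$, is exactly the paper's, and your derivative computation uses the same ingredients in the same roles: the height-function derivative \eqref{ES5.2:drvofimpfnc}, the tangency identity $u\cdot\partial_i\Gamma(u,z)\equiv 0$ together with its $z$-derivative, the normal/tangential splitting of $\tau$, and the invertibility of $\derv\Gamma_0(u)$ on $\tng_u\mathbb{S}^{n-2}$. The only organizational difference is that you isolate $\partial_\epsilon\theta_\epsilon|_{\epsilon=0}$ by differentiating $\nu_\epsilon\circ\theta_\epsilon=\mathrm{id}$ and computing $\dot\nu$ directly, whereas the paper differentiates the orthogonality relation for the composite $\Gamma_\epsilon\circ\theta_\epsilon$ together with a companion identity; both routes are correct and yield the asserted formula.
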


\begin{proof}
	Let $(-\delta, \delta)$ be the interval obtained in Definition \ref{ES5.2:impfctdom} of the height function $z$, then for 			each $\lvert \epsilon \rvert  < \delta$ the map
	\begin{align*}
		\mathbb{S}^{n-2} &\to \mathbb{R}^{n-1}\\
		u 			    &\mapsto \Gamma \bigl( u, z(\epsilon, u) \bigr)
	\end{align*}
	parameterizes $\mathcal{O}_{\tau}(z_{0}, \epsilon)$, and
	\begin{align*}
		\psi \colon \mathbb{S}^{n-2} \times (-\delta, \delta) &\to \mathbb{R}^{n-1}\\
		(u, \epsilon) 							   &\mapsto \Gamma \bigl( u, z(\epsilon, u) \bigr)
	\end{align*}
	is smooth. Define the smooth map
	\begin{align*}
	 	\nu \colon \mathbb{S}^{n-2} \times (- \delta, \delta) &\to \mathbb{S}^{n-2} \times (-\delta, \delta)\\
		(u, \epsilon) &\mapsto \left( \frac{\star \Bigl(\bigwedge \nolimits_{n-2} \partial_{1} \psi(u, \epsilon)\Bigr) 				\Bigl( \frac{\partial}{\partial u_{1}} \wedge \cdots \wedge \frac{\partial}{\partial u_{n-2}} \Bigr)}{\Bigl\lvert \star 			\Bigl(\bigwedge \nolimits_{n-2} \partial_{1} \psi(u, \epsilon)\Bigr) \Bigl( \frac{\partial}{\partial u_{1}} \wedge 			\cdots \wedge \frac{\partial}{\partial u_{n-2}} \Bigr) \Bigr\rvert}, \ \epsilon \right)
	\end{align*}
	and for every $\lvert \epsilon \rvert < \delta$ the map
	\begin{equation} \label{DS5.2:outuntnrml}
		u \in \mathbb{S}^{n-2} \mapsto \nu_{\epsilon}(u) = \frac{\star \Bigl(\bigwedge \nolimits_{n-2} \partial_{1} \psi(u, 			\epsilon)\Bigr) \Bigl( \frac{\partial}{\partial u_{1}} \wedge \cdots \wedge \frac{\partial}{\partial u_{n-2}} \Bigr)}			{\Bigl\lvert \star \Bigl(\bigwedge \nolimits_{n-2} \partial_{1} \psi(u, \epsilon)\Bigr) \Bigl( \frac{\partial}{\partial 			u_{1}} \wedge \cdots \wedge \frac{\partial}{\partial u_{n-2}} \Bigr) \Bigr\rvert}
	\end{equation}
	which, according to Remark \ref{RS3.2:outrunitnorfld}, is the outer unit normal field along the closed embedded 			hypersurface $\psi(\mathbb{S}^{n-2}, \epsilon)$. The map $u \mapsto \nu_{\epsilon}(u)$ is a diffeomorphism because $u 		\mapsto \psi(u, \epsilon)$ parameterizes the ovaloid $\mathcal{O}_{\tau}(z_{0}, \epsilon)$ and the outer unit normal field $	\nu_{\epsilon}$, in \eqref{DS5.2:outuntnrml}, is a diffeomorphism as a result of a simple observation in the case $n = 3$ 		(oval), and from Hadamard's theorem \cite[p.41]{jH98, sKkN69} in the case $n \geq 4$. Since $\nu$ is bijective and its 		differential has full rank, by the inverse function theorem, $\nu$ is a diffeomorphism with the smooth inverse
	\begin{align*}
		\theta \colon \mathbb{S}^{n-2} \times (-\delta, \delta) &\to \mathbb{S}^{n-2} \times (-\delta, \delta)\\
		(u, \epsilon)							      &\mapsto \bigl( \theta_{\epsilon}(u), \epsilon \bigr) 
	\end{align*}
	where $\theta_{\epsilon} = \nu_{\epsilon}^{-1}$, $\lvert \epsilon \rvert < \delta$. Since $\theta$ is smooth	, 				$\{ \theta_{\epsilon} \colon \lvert \epsilon \rvert  < \delta \}$ is a $1$-parameter family of diffeomorphisms. 
	
	For each $\lvert 	\epsilon \rvert < \delta$, since $\psi( \cdot, \epsilon)$ parameterizes $\mathcal{O}_{\tau}(z_{0}, \epsilon)		$, the map
	\begin{align*}
		&\mathbb{S}^{n-2} \to \mathbb{R}^{n-1}\\
		&u \mapsto \bigl( \theta_{\epsilon}(u), \epsilon \bigr) \mapsto \psi \bigl( \theta_{\epsilon}(u), \epsilon \bigr) = 				   \Gamma \bigr(\, \theta_{\epsilon}(u), z\bigl( \epsilon, \theta_{\epsilon}(u) \bigr) \,\bigr)
	\end{align*}
	support parameterizes $\mathcal{O}_{\tau}(z_{0}, \epsilon)$ because the unit vector $u \in \mathbb{S}^{n-2}$ is the outer 	unit normal to the ovaloid $\psi(\mathbb{S}^{n-2}, \epsilon) = \mathcal{O}_{\tau}(z_{0}, \epsilon)$ at
	\[
		\psi \bigl( \theta_{\epsilon}(u), \epsilon \bigr) = \Gamma \bigl(\, \theta_{\epsilon}(u), z \bigl( \epsilon, 					\theta_{\epsilon}(u) \bigr) \,\bigr).
	\]
	
	\begin{figure}[h]
		\centering
		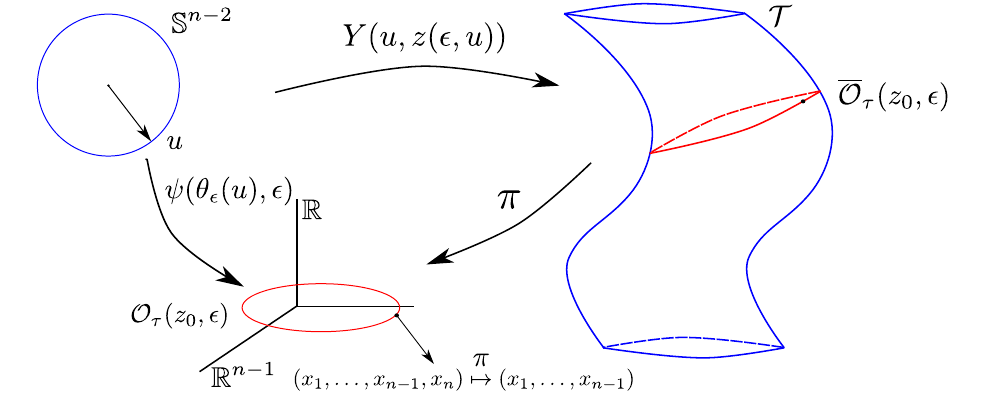
		\caption{Support parameterization of $\mathcal{O}_{\tau}(z_{0}, \epsilon)$}
	\end{figure}
	
	When $\epsilon = 0$,
	\begin{align*}
		&\nu_{0} \colon \mathbb{S}^{n-2} \to \mathbb{S}^{n-2}\\
		&u						  \mapsto \frac{\star \Bigl(\bigwedge \nolimits_{n-2} \partial_{1} \psi(u, 0)\Bigr) 			\Bigl( \frac{\partial}{\partial u_{1}} \wedge \cdots \wedge \frac{\partial}{\partial u_{n-2}} \Bigr)}{\Bigl\lvert \star 			\Bigl(\bigwedge \nolimits_{n-2} \partial_{1} \psi(u, 0)\Bigr) \Bigl( \frac{\partial}{\partial u_{1}} \wedge \cdots 			\wedge \frac{\partial}{\partial u_{n-2}} \Bigr) \Bigr\rvert} = u
	\end{align*}
	because $u \mapsto \Gamma \bigl( u, z(0, u) \bigr) = \Gamma(u, z_{0})$ is already the support parameterization of $			\mathcal{O}(z_{0}) = \mathcal{O}_{\tau}(z_{0}, 0)$. So we can conclude that for each $\lvert \epsilon \rvert < \delta$, $		\Gamma_{\epsilon} \circ \theta_{\epsilon}$ support parameterizes the ovaloid $\mathcal{O}_{\tau}(z_{0}, \epsilon)$.
	
	Fix $u \in \mathbb{S}^{n-2}$, $v \in \tng_{u} \mathbb{S}^{n-2}$, and choose a smooth curve $c \colon J \to \mathbb{S}		^{n-2}$ so that $c(0) = u$ and $c'(0) = v$. Then for every $\lvert \epsilon \rvert  < \delta$
	\begin{align}
		0 &= u \cdot \derv \, (\Gamma_{\epsilon} \circ \theta_{\epsilon})(u) \, v \notag\\
		   &= u \cdot \left. \frac{\partial}{\partial t} \right\rvert_{t=0} (\Gamma_{\epsilon} \circ \theta_{\epsilon}) \bigl[ c(t) 				\bigr] = \left. \frac{\partial}{\partial t} \right\rvert_{t=0} u \cdot (\Gamma_{\epsilon} \circ \theta_{\epsilon}) 				\bigr[ c(t) \bigr]. \notag\\
		\intertext{Taking the $\epsilon$-derivative at $\epsilon = 0$ and using the smoothness of $\Gamma$ we obtain}
		0 &= \left. \frac{\partial}{\partial \epsilon} \right\rvert_{\epsilon=0} \left. \frac{\partial}{\partial t} \right\rvert_{t=0} 			\Bigl\{ u \cdot \Gamma \Bigl(\, \theta_{\epsilon}(c(t)), z\bigl( \epsilon, \theta_{\epsilon}(c(t)) \bigr) \,\Bigr) 				\Bigr	\} \notag\\
		   &= \left. \frac{\partial}{\partial t} \right\rvert_{t=0} \left\{ u \cdot \left. \frac{\partial}{\partial \epsilon} \right					\rvert_{\epsilon=0} \Gamma \Bigl(\, \theta_{\epsilon}(c(t)), z\bigl( \epsilon, \theta_{\epsilon}(c(t)) \bigr) \, 				\Bigr)\right\}. \label{ES5.2:drvcompsptfnc1}
	\end{align}
	$\epsilon \mapsto \Bigl(\, \theta_{\epsilon} \bigl( c(t) \bigr), z \bigl( \epsilon, \theta_{\epsilon}(c(t)) \bigr) \,\Bigr)$ is a 		smooth curve on $\mathbb{S}^{n-2} \times (-\delta, \delta)$ whose $\epsilon$-derivative at $\epsilon = 0$ equals 
	\begin{align*}
		& \left. \frac{\partial}{\partial \epsilon} \right\rvert_{\epsilon=0} \Bigl(\, \theta_{\epsilon} \bigl( c(t) \bigr), z 				     \bigl( \epsilon, \theta_{\epsilon}(c(t)) \bigr) \,\Bigr)\\
		 &= \left(\, \left. \frac{\partial}{\partial \epsilon} \right\rvert_{\epsilon=0} \theta_{\epsilon} \bigl( c(t) \bigr),\  					(\partial_{1}z) \bigl(\, 0, c(t) \,\bigr) (1) + (\partial_{2}z) \bigl(\, 0, c(t) \,\bigr) \left( \left. \frac{\partial}{\partial 			\epsilon} \right\rvert_{\epsilon=0} \theta_{\epsilon} \bigl( c(t) \bigr) \right) \,\right).
	\end{align*}
	Since $z( 0, \cdot ) \equiv z_{0}$, $(\partial_{2}z) \bigl(\, 0, c(t) \,\bigr) = 0$, using the equation 						\eqref{ES5.2:drvofimpfnc} we can compute 			
	\[
		(\partial_{1}z) \bigl(\, 0 , c(t) \,\bigr)(1)  = \left. \frac{\partial}{\partial \epsilon} \right\rvert_{\epsilon=0} z				\bigl(\, \epsilon, c(t) \,\bigr) = \tau \cdot \Gamma \bigl(\, c(t), z_{0} \,\bigr)
	\]
	and hence the tangent vector equals
	\[
		\left. \frac{\partial}{\partial \epsilon} \right\rvert_{\epsilon=0} \Bigl(\,  \theta_{\epsilon} \bigl (c(t) \bigr), z \bigl(			\epsilon, \theta_{\epsilon} \bigl( c(t) \bigr) \bigr) \, \Bigr) = \left(\, \left. \frac{\partial}{\partial \epsilon} \right				\rvert_{\epsilon=0} \theta_{\epsilon} \bigl( c(t) \bigr), \ \tau \cdot \Gamma \bigl(\, c(t), z_{0} \,\bigr) \right).
	\]
	The computation of the derivative in \eqref{ES5.2:drvcompsptfnc1} can be continued as
	\begin{align*}
		\Rightarrow \left. \frac{\partial}{\partial \epsilon} \right\rvert_{\epsilon=0} (\Gamma_{\epsilon} \circ 					\theta_{\epsilon}) \bigl[ c(t) \bigr] &= (\partial_{1}\Gamma) \bigl( c(t), z_{0} \bigr) \left( \left. \frac{\partial}{\partial 		\epsilon} \right\rvert_{\epsilon=0} \theta_{\epsilon} \bigl[ c(t) \bigr] \right)\\
		& \phantom{(\partial_{1}\Gamma) \bigl[ c(t), z_{0} \bigr] + }+ (\partial_{2} \Gamma) \bigl( c(t), z_{0} \bigr) \bigl(\, 		\tau \cdot \Gamma \bigl( c(t), z_{0} \bigr) \,\bigr)
	\end{align*}
	\begin{align*}
		\Rightarrow 0 &= \left. \frac{\partial}{\partial t} \right\rvert_{t=0} \left\{ u \cdot (\partial_{1}\Gamma) \bigl( c(t), 			z_{0} \bigr) \left. \frac{\partial}{\partial \epsilon} \right\rvert_{\epsilon=0} \theta_{\epsilon} \bigl[ c(t) \bigr] \right\}\\
		 &\phantom{\left. \frac{\partial}{\partial t} \right\rvert_{t=0} \left\{ u \cdot (\partial_{1}\Gamma) \bigl( c(t) \right\}} + 		\left. \frac{\partial}{\partial t} \right\rvert_{t=0} \Bigl\{ u \cdot (\partial_{2} \Gamma) \bigl( c(t), z_{0} \bigr) \bigl( \tau 		\cdot \Gamma \bigl( c(t), z_{0} \bigr) \bigr) \Bigr\}\\
		 &= u \cdot \left. \frac{\partial}{\partial t} \right\rvert_{t=0} \left\{ \derv \Gamma \bigl( c(t), z_{0} \bigr) \left( \left. 			\frac{\partial}{\partial \epsilon} \right\rvert_{\epsilon=0} \bigl( \theta_{\epsilon} \bigl[ c(t) \bigr], z_{0} \bigr) \right) 		\right\}\\
		 &\phantom{\left. \frac{\partial}{\partial t} \right\rvert_{t=0} \left\{ u \cdot (\partial_{1}\Gamma) \bigl( c(t) \right\}} + u 		\cdot \left. \frac{\partial}{\partial t} \right\rvert_{t=0} \left\{ \left( \frac{\partial}{\partial z} \right \rvert_{z=z_{0}} 			\Gamma \bigl( c(t), z \bigr) \right) \bigl( \tau \cdot \Gamma \bigl( c(t), z_{0} \bigr) \bigr)\\
		&= u \cdot \left. \frac{\partial}{\partial t} \right\rvert_{t=0} \left. \frac{\partial}{\partial \epsilon} \right					\rvert_{\epsilon=0} \Gamma \bigl(\, \theta_{\epsilon} \bigl( c(t) \bigr), z_{0} \,\bigr)\\
		 &\phantom{\left. \frac{\partial}{\partial t} \right\rvert_{t=0} \left\{ u \cdot (\partial_{1}\Gamma) \bigl( c(t) \right\}} + u \cdot \left\{ \left( \left. \frac{\partial}{\partial t} \right\rvert_{t=0} \left. \frac{\partial}{\partial z} 			\right\rvert_{z=z_{0}} \Gamma \bigl( c(t), z \bigr) \right)  \bigl(\tau \cdot \Gamma (u, z_{0}) \bigr) \right\}\\
		 &\phantom{\left. \frac{\partial}{\partial t} \right\rvert_{t=0} \left\{ u \cdot (\partial_{1}\Gamma) 						\bigl( c(t) \right\}} + u \cdot \left\{ \left( \left. \frac{\partial}{\partial z} \right\rvert_{z=z_{0}} 				\Gamma(u, z) \right) \left( \tau \cdot \left. \frac{\partial}{\partial t} \right\rvert_{t=0} \Gamma \bigl( c(t), z_{0} \bigr) 		\right) \right\}
	\end{align*}
	\[
		\text{Since} \  \left. \frac{\partial}{\partial t} \right\rvert_{t=0} \Gamma \bigl( c(t), z_{0} \bigr) = \left. \frac{\partial}			{\partial t} \right\rvert_{t=0} \Gamma_{0} \bigl[ c(t) \bigr] = \derv \Gamma_{0}(u)v, \quad \text{we continue as}
	\]
	\begin{align*}
		= u \cdot \left. \frac{\partial}{\partial \epsilon} \right\rvert_{\epsilon=0} \left. \frac{\partial}{\partial t} \right				\rvert_{t=0} \Gamma \bigl( \theta_{\epsilon} \bigl( c(t) \bigr), z_{0} \bigr) &+ \left. \frac{\partial}{\partial z} \right			\rvert_{z=z_{0}} \left\{ u \cdot \left. \frac{\partial}{\partial t} \right\rvert_{t=0} \Gamma \bigl( c(t), z \bigr) \right\} 			\bigl( \tau \cdot \Gamma (u, z_{0}) \bigr)\\
		&+ \left[ u \cdot \frac{\partial \Gamma}{\partial z} (u, z_{0}) \right] \bigl( \tau \cdot \derv \Gamma_{0}(u)v \bigr).
	\end{align*}
	For each $\lvert z \rvert < \delta$, $\Gamma(\cdot, z)$ support parameterizes $\mathcal{O}_{\tau}(z, 0) = \mathcal{O}(z)$, 	and 
	\begin{align*}
		&\left. \frac{\partial}{\partial t} \right\rvert_{t=0} \Gamma \bigl[ c(t), z \bigr] \in \tng_{\Gamma(u, z)} \mathcal{O}			    (z) = \tng_{u} \mathbb{S}^{n-2}\\
		&\Rightarrow u \cdot \left. \frac{\partial}{\partial t} \right\rvert_{t=0} \Gamma \bigl( c(t), z\bigr) = 0
	\end{align*}
	Therefore, we can continue the computation as
	\begin{align*}
		&=  u \cdot \left. \frac{\partial}{\partial \epsilon} \right\rvert_{\epsilon=0} \left. \frac{\partial}{\partial t} \right				\rvert_{t=0} \Gamma_{0} \bigl(\, \theta_{\epsilon} \bigl( c(t) \bigr) \,\bigr) + \left[ u \cdot \frac{\partial \Gamma}			{\partial z} (u, z_{0}) \right] \bigl( \tau \cdot \derv \Gamma_{0}(u)v \bigr)\\
		&= u \cdot \left. \frac{\partial}{\partial \epsilon} \right\rvert_{\epsilon=0} \derv \Gamma_{0} \bigl[ \theta_{\epsilon}			(u) \bigr] \bigl( \derv \theta_{\epsilon}(u)v \bigr) +\left[ u \cdot \frac{\partial \Gamma}{\partial z} (u, z_{0}) \right] 			\bigl( \tau \cdot \derv \Gamma_{0}(u)v \bigr).
	\end{align*}
	In summary, we have the equality
	\begin{equation} \label{ES5.2:drvcompsptfnc2}
		\begin{aligned}
			0 &= u \cdot \derv \, \bigl(\Gamma_{\epsilon} \circ \theta_{\epsilon}\bigr)(u)v\\
			   &= u \cdot \left. \frac{\partial}{\partial \epsilon} \right\rvert_{\epsilon=0} \derv \Gamma_{0} 							\bigl[ \theta_{\epsilon}(u) \bigr] \bigl( \derv \theta_{\epsilon}(u)v \bigr) +\left[ u \cdot \frac{\partial 						\Gamma}{\partial z} (u, z_{0}) \right] \bigl( \tau \cdot \derv \Gamma_{0}(u)v \bigr).
		\end{aligned}
	\end{equation}
	Note that $\Gamma_{0}(\cdot) = \Gamma(\cdot, z_{0})$ support parameterizes $\mathcal{O}(z_{0})$ and so for every $		\lvert \epsilon \rvert < \delta$
	\begin{equation} \label{ES5.2:drvcompsptfnc3}
		\theta_{\epsilon}(u) \cdot \derv \Gamma_{0} \bigl[ \theta_{\epsilon}(u) \bigr] \bigl( \derv \theta_{\epsilon}(u)v \bigr) 		= 0.
	\end{equation}
	When we take the $\epsilon$-derivative of the equation \eqref{ES5.2:drvcompsptfnc3} at $\epsilon = 0$ we get
	\begin{align*}
		0 &= \left. \frac{\partial}{\partial \epsilon} \right\rvert_{\epsilon=0} \Bigl\{ \theta_{\epsilon}(u) \cdot \derv 					\Gamma_{0} \bigl[ \theta_{\epsilon}(u) \bigr] \bigl( \derv \theta_{\epsilon}(u)v \bigr) \Bigr\}\\
		   &= \left[ \left. \frac{\partial}{\partial \epsilon} \right\rvert_{\epsilon=0} \theta_{\epsilon}(u) \right] \cdot \derv 				\Gamma_{0}(u)v + u \cdot  \left. \frac{\partial}{\partial \epsilon} \right\rvert_{\epsilon=0} \derv \Gamma_{0} 				\bigl[ \theta_{\epsilon}(u) \bigr] \bigl( \derv \theta_{\epsilon}(u)v \bigr)
	\end{align*}
	\begin{equation} \label{ES5.2:drvcompsptfnc4}
		\Rightarrow u \cdot \left. \frac{\partial}{\partial \epsilon} \right\rvert_{\epsilon=0} \derv \Gamma_{0} 					\bigl[ \theta_{\epsilon}(u) \bigr] \bigl(\derv \theta_{\epsilon}(u)v \bigr) = - \left[ \left. \frac{\partial}{\partial \epsilon} 		\right	\rvert_{\epsilon=0} \theta_{\epsilon}(u) \right] \cdot \derv \Gamma_{0}(u)v.
	\end{equation}
	Using the equality \eqref{ES5.2:drvcompsptfnc4} in the equation \eqref{ES5.2:drvcompsptfnc2}, we get
	\begin{align*}
		0 &= - \left[ \left. \frac{\partial}{\partial \epsilon} \right\rvert_{\epsilon=0} \theta_{\epsilon}(u) \right] \cdot \derv 			 	\Gamma_{0}(u)v + \left[ u \cdot \frac{\partial \Gamma}{\partial z}(u, z_{0}) \right] \bigl( \tau \cdot \derv 				\Gamma_{0}(u)v \bigr)\\
		   &= \left[ \left( u \cdot \frac{\partial \Gamma}{\partial z}(u, z_{0}) \right) \tau - \left. \frac{\partial}{\partial 					 \epsilon} \right\rvert_{\epsilon=0} \theta_{\epsilon}(u) \right] \cdot \derv \Gamma_{0}(u)v\\
		   \intertext{since $\derv \Gamma_{0}(u)v \in \tng_{\Gamma(u, z_{0})} \mathcal{O}(z_{0}) = \tng_{u}\mathbb{S}					 ^{n-2}$ we can replace $\tau$ with $\tau^{\tng}(u)$}
		   &= \left[ \left( u \cdot \frac{\partial \Gamma}{\partial z}(u, z_{0}) \right) \tau^{\tng}(u) - \left. \frac{\partial}					{\partial \epsilon} \right\rvert_{\epsilon=0} \theta_{\epsilon}(u) \right] \cdot \derv \Gamma_{0}(u)v
	\end{align*}
	holds for every $v \in \tng_{u}\mathbb{S}^{n-2}$. Since $\derv \Gamma_{0}(u) \colon \tng_{u}\mathbb{S}^{n-2} \to 		\tng_{u} \mathbb{S}^{n-2}$ is an isomorphism we can conclude that
	\[
		\left. \frac{\partial}{\partial \epsilon} \right\rvert_{\epsilon=0} \theta_{\epsilon}(u) =  \left( u \cdot \frac{\partial 			\Gamma}{\partial z}(u, z_{0}) \right) \tau^{\tng}(u)
	\]
	holds for every $u \in \mathbb{S}^{n-2}$.
\end{proof}	

\subsubsection{The symmetry obstruction}

Our goal is to get an analytic formulation of central symmetry. Since the constancy of the centrix is equivalent to central symmetry, an appropriate derivative of the centrix, which we call the symmetry obstruction, will provide the needed formulation.

\begin{definition}
	Given any tilt direction $\tau \in \mathbb{S}^{n-2}$, $z_{0} \in I$, and $\delta > 0$ as in Proposition					\ref{PS5.2:sptreparam}, define the $1$-parameter family of smooth maps
	\begin{align*}
		c_{\epsilon} \colon \mathbb{S}^{n-2} \to \mathbb{S}^{n-2}\\
		\intertext{for each $\lvert \epsilon \rvert < \delta$ as}
		c_{\epsilon}(u) = c_{\epsilon}(u, z_{0}) = \frac{\Gamma_{\epsilon} \bigl[ \theta_{\epsilon}(u) \bigr] + \Gamma 			\bigl[ \theta_{\epsilon}(-u) \bigr]}{2} 
	\end{align*}
	Namely, for each $\lvert \epsilon \rvert < \delta$, $c_{\epsilon}(\cdot, z_{0})$ is the centrix of the projected ovaloid $		\mathcal{O}_{\tau}(z_{0}, \epsilon)$.
\end{definition}
	
Let $\Gamma^{-}( \cdot, z)$ denote the odd part of $\Gamma(\cdot, z)$ for each $-1 < z < 1$ and define the maps
\begin{equation} \label{NS5.2:sptfnc}
	\begin{aligned}
		&\psi \colon \mathbb{S}^{n-2} \to \mathbb{R}^{n-1} &\hspace{2cm}  & \rho \colon \mathbb{R}^{n-1} \to 				\mathbb{R}^{n-1}\\
		&\psi(u) = \Gamma^{-}(u, z_{0})       		        	       &                          & \rho(x) = -x\\[0.5cm]
		&\bar{\theta}_{\epsilon} \colon \mathbb{S}^{n-2} \to \mathbb{S}^{n-2} &&\\
		&\bar{\theta}_{\epsilon} = (\theta_{\epsilon} \circ \rho)(u).                         &&  
	\end{aligned}
\end{equation}
Then we can compute for each $u \in \mathbb{S}^{n-2}$
\begin{align*}
	(\psi \circ \rho)(u) 			          &= \psi(-u) = \Gamma^{-}(-u, z_{0}) = \frac{\Gamma(-u, z_{0}) -\Gamma(u, 										z_{0})}{2}\\
				  			          &= - \Gamma^{-}(u, z_{0}) = - \psi(u) = (\rho \circ \psi)(u)
\end{align*}
\[
	\Rightarrow \derv \psi(-u) = - \derv \, (\psi \circ \rho)(u)   = - \derv \, (\rho \circ \psi)(u) =  \derv \psi(u)
\]
which can be equivalently written as
\begin{equation} \label{ES5.2:dervoddsptparam}
	(\partial_{1} \Gamma^{-})(-u, z_{0})  = (\partial_{1} \Gamma^{-})(u, z_{0}).
\end{equation}

\begin{lemma}
	Supose the horizontal cross-section $\mathcal{O}(z)$ of a transversely convex tube $\mathcal{T}$ is central about $(c(z), 		z) $ for each $-1 < z < 1$. Then for any tilt direction $\tau \in \mathbb{S}^{n-2}$ and $z_{0} \in I$, we have
	\begin{equation} \label{ES5.2:dervctx}
		\begin{aligned}
				\left. \frac{\partial}{\partial \epsilon} \right\rvert_{\epsilon=0} c_{\epsilon}(u, z_{0}) &=							\left( \frac{\partial \Gamma^{-}}{\partial z}(u, z_{0}) \cdot u \right) (\partial_{1}\Gamma^{-})(u, z_{0}) 					\bigl[\tau^{\tng}(u)\bigr]\\
				&+ \bigl[ \tau \cdot c(z_{0}) \bigr] c'(z_{0}) + \bigl[ \tau \cdot \Gamma^{-}(u, z_{0})\bigr] \frac{\partial 					\Gamma^{-}}{\partial z} (u, z_{0}).
		\end{aligned}
	\end{equation}
\end{lemma}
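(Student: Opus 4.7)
The plan is to apply the chain rule directly to the defining formula
\[
	c_{\epsilon}(u, z_{0}) = \frac{\Gamma_{\epsilon}\bigl[\theta_{\epsilon}(u)\bigr] + \Gamma_{\epsilon}\bigl[\theta_{\epsilon}(-u)\bigr]}{2}
	= \frac{\Gamma\bigl(\theta_{\epsilon}(u), z(\epsilon, \theta_{\epsilon}(u))\bigr) + \Gamma\bigl(\theta_{\epsilon}(-u), z(\epsilon, \theta_{\epsilon}(-u))\bigr)}{2},
\]
compute the $\epsilon$-derivative at $\epsilon = 0$ of each of the two summands separately, and then average.  The inputs I will use are already available in the excerpt: $\theta_{0}=\mathrm{id}_{\mathbb{S}^{n-2}}$; the $\epsilon$-derivative $\partial_{\epsilon}\theta_{\epsilon}(u)|_{0} = \bigl(u \cdot \partial_{z}\Gamma(u, z_{0})\bigr)\tau^{\tng}(u)$ from Proposition~\ref{PS5.2:sptreparam}; the initial value $z(0,\cdot)\equiv z_{0}$ together with $\partial_{\epsilon}z(\epsilon, u)|_{0} = \tau \cdot \Gamma(u, z_{0})$ from~\eqref{ES5.2:drvofimpfnc}; and, crucially, the centrality hypothesis via Lemma~\ref{LS5.1:oddsptparam}, which lets me write $\Gamma(u, z) = c(z) + \Gamma^{-}(u, z)$ with $\Gamma^{-}(-u, z) = -\Gamma^{-}(u, z)$.

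Applied to the first summand, the chain rule gives
\[
	\left.\frac{\partial}{\partial \epsilon}\right|_{0}\!\Gamma_{\epsilon}\bigl[\theta_{\epsilon}(u)\bigr] = (\partial_{1}\Gamma)(u, z_{0})\!\left[\Bigl(u \cdot \tfrac{\partial \Gamma}{\partial z}(u, z_{0})\Bigr)\tau^{\tng}(u)\right] + \tfrac{\partial \Gamma}{\partial z}(u, z_{0})\bigl[\tau \cdot \Gamma(u, z_{0})\bigr],
\]
where I also used $(\partial_{2}z)(0, u) = 0$.  The analogous derivative at $-u$ produces the same expression with $u$ replaced by $-u$ in every slot; at this step I will invoke the identity $\tau^{\tng}(-u) = \tau^{\tng}(u)$ (immediate from~\eqref{DS5.2:tautng}), the identity $(\partial_{1}\Gamma^{-})(-u, z_{0}) = (\partial_{1}\Gamma^{-})(u, z_{0})$ in~\eqref{ES5.2:dervoddsptparam}, and centrality to write $\partial_{1}\Gamma = \partial_{1}\Gamma^{-}$, $\partial_{z}\Gamma(\pm u, z_{0}) = c'(z_{0}) \pm \partial_{z}\Gamma^{-}(u, z_{0})$, and $\Gamma(\pm u, z_{0}) = c(z_{0}) \pm \Gamma^{-}(u, z_{0})$.

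With these substitutions, summing the two contributions is a matter of algebra: the terms in the $\tau^{\tng}(u)$ direction combine as $\bigl[(u\cdot c'(z_{0})) + (u \cdot \partial_{z}\Gamma^{-}(u, z_{0}))\bigr] + \bigl[-(u\cdot c'(z_{0})) + (u \cdot \partial_{z}\Gamma^{-}(u, z_{0}))\bigr]$, so the $c'(z_{0})$ piece cancels and the $\partial_{z}\Gamma^{-}$ piece doubles; while the second summand has the form $(p+q)(r+s) + (p-q)(r-s) = 2pr + 2qs$ with $p = c'(z_{0})$, $q = \partial_{z}\Gamma^{-}(u, z_{0})$, $r = \tau \cdot c(z_{0})$, $s = \tau \cdot \Gamma^{-}(u, z_{0})$.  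Dividing the sum by $2$ then yields exactly the three-term expression in~\eqref{ES5.2:dervctx}.

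The main obstacle is purely bookkeeping: keeping the arguments $(\pm u, z_{0})$ straight through two nested chain rules and making sure the centrality-driven parities of $c'(z_{0})$ versus $\partial_{z}\Gamma^{-}(u, z_{0})$ (and of $c(z_{0})$ versus $\Gamma^{-}(u, z_{0})$) produce the clean cross-term cancellation described above.  No new geometric input is needed beyond the ingredients listed.
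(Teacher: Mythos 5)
Your proposal is correct and follows essentially the same route as the paper: decompose $c_{\epsilon}$ into the two summands at $\pm u$, apply the chain rule using $\theta_{0}=\mathrm{id}$, $\partial_{\epsilon}\theta_{\epsilon}|_{0}$ from Proposition~\ref{PS5.2:sptreparam}, $(\partial_{2}z)(0,u)=0$, and \eqref{ES5.2:drvofimpfnc}, then use centrality to split $\Gamma = c + \Gamma^{-}$ and exploit the parities ($\tau^{\tng}(-u)=\tau^{\tng}(u)$, \eqref{ES5.2:dervoddsptparam}, oddness of $\Gamma^{-}$) so the cross terms cancel. The algebraic identity $(p+q)(r+s)+(p-q)(r-s)=2pr+2qs$ is exactly the cancellation the paper carries out term by term.
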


\begin{proof}
	Using the notation in \eqref{NS5.2:sptfnc} the centrix can be written as
	\[
		c_{\epsilon}(u, z_{0}) = \frac{1}{2} \Bigl\{ \Gamma \Bigl(\, \theta_{\epsilon}(u), z\bigl( \epsilon, \theta_{\epsilon}			(u) \bigr) \,\Bigr) + \Gamma \Bigl(\, \bar{\theta}_{\epsilon}(u), z \bigl( \bar{\theta}_{\epsilon}(u), z 						\bigl(\epsilon, \bar{\theta}_{\epsilon}(u) \bigr) \,\Bigr) \Bigr\}
	\]
	and the $\epsilon$-derivative at $\epsilon = 0$ of the first summand equals
	\begin{align}
		&\left. \frac{\partial}{\partial \epsilon} \right\rvert_{\epsilon=0} \Gamma \Bigl(\, \theta_{\epsilon}(u), z 					\bigl( \epsilon, \theta_{\epsilon}(u) \bigr) \,\Bigr) \notag\\
		&= \derv \Gamma(u, z_{0}) \left( \left. \frac{\partial}{\partial \epsilon} \right\rvert_{\epsilon=0} \theta_{\epsilon}(u), 		\frac{\partial z}{\partial \epsilon} (0, u) + (\partial_{2}z) (0, u) \left[ \left. \frac{\partial}{\partial \epsilon} \right			\rvert_{\epsilon=0} \theta_{\epsilon}(u) \right] \right). \label{ES5.2:dervcompcnt1}
	\end{align}
	Since $z(0, \cdot) \equiv z_{0}$, it follows that $(\partial_{2}z) (0, u) = 0$ and using the equation						\eqref{ES5.2:drvofimpfnc} the derivative in \eqref{ES5.2:dervcompcnt1} simplifies to
	\[
		(\partial_{1} \Gamma) (u, z_{0}) \left[ \left. \frac{\partial}{\partial \epsilon} \right\rvert_{\epsilon=0} 					\theta_{\epsilon}(u) \right] + \frac{\partial \Gamma}{\partial z}(u, z_{0}) \bigl[ \tau \cdot \Gamma(u, z_{0}) \bigr]
	\]
	For each $z \in I$, $\Gamma(\cdot, z)$ support parameterizes the projected ovaloid $\mathcal{O}(z)$ with center of 			symmetry at $c(z)$ , $\Gamma(\cdot, z) = c(z) + \Gamma^{-}(\cdot, z)$, and according to Lemma 						\ref{LS5.1:oddsptparam}, $\Gamma^{-}(\cdot, z)$ support parameterizes the ovaloid $\mathcal{O}(z) - c(z)$. Using the 		previous equality and the equation \eqref{ES5.2:dervoddsptparam} we can conclude that
	\[
		(\partial_{1} \Gamma)(u, z) = (\partial_{1} \Gamma^{-})(u, z) = (\partial_{1} \Gamma^{-})(-u, z) = (\partial_{1}			\Gamma)(-u, z)
	\]
	for each $u \in \mathbb{S}^{n-2}$ and $z \in I$. Using these observations we can compute the $\epsilon$-derivative of 		$c_{\epsilon}(u, z_{0})$ at $\epsilon = 0$ as follows
	\begin{align*}
		&\left. \frac{\partial}{\partial \epsilon} \right\rvert_{\epsilon=0} c_{\epsilon}(u, z_{0}) = \frac{1}{2} 					\left\{ (\partial_{1} \Gamma) (u,  z_{0}) \left[ \left. \frac{\partial}{\partial \epsilon} \right\rvert_{\epsilon=0} 				\theta_{\epsilon}(u) \right] + \frac{\partial \Gamma}{\partial z}(u, z_{0}) \bigl[ \tau \cdot \Gamma(u, z_{0}) \bigr] 			\right\}\\
		&\quad+ \frac{1}{2} \left\{(\partial_{1} \Gamma) (-u, z_{0}) \left[ \left. \frac{\partial}{\partial \epsilon} \right				\rvert_{\epsilon=0} \bar{\theta}_{\epsilon}(u) \right] + \frac{\partial \Gamma}{\partial z}(-u, z_{0}) \bigl[ \tau \cdot 			\Gamma(-u, z_{0}) \bigr] \right\}\\
		&= \frac{1}{2} \left\{ (\partial_{1} \Gamma^{-}) (u, z_{0}) \left[ \left. \frac{\partial}{\partial \epsilon} \right				\rvert_{\epsilon=0} \theta_{\epsilon}(u) \right] \right\}\\
		 &\quad + \frac{1}{2} \left\{ \left( c'(z_{0}) + \frac{\partial \Gamma^{-}}{\partial z} (u, z_{0}) \right) \bigl( \tau 			\cdot 	 \bigl[ c(z_{0}) + \Gamma^{-}(u, z_{0}) \bigr] \bigr) \right\}\\
		&\qquad + \frac{1}{2} \left\{ (\partial_{1} \Gamma) (u, z_{0}) \left[ \left. \frac{\partial}{\partial \epsilon} \right			\rvert_{\epsilon=0} \bar{\theta}_{\epsilon}(u) \right] \right\}\\
		&\quad \qquad + \frac{1}{2} \left\{ \left( c'(z_{0}) - \frac{\partial \Gamma^{-}}{\partial z} (u, z_{0}) \right) 				\bigl( \tau \cdot \bigl[ c(z_{0}) - \Gamma^{-}(u, z_{0}) \bigr] \bigr) \right\}
	\end{align*}
	\begin{equation} \label{ES5.2:dervcompcnt2}
		\begin{aligned}
			&= \frac{1}{2} \left\{  (\partial_{1} \Gamma^{-}) (u, z_{0}) \left[ \left. \frac{\partial}{\partial \epsilon} \right				\rvert_{\epsilon=0} \theta_{\epsilon}(u) +  \left. \frac{\partial}{\partial \epsilon} \right \rvert_{\epsilon=0} 				\bar{\theta}_{\epsilon}(u) \right]\right\} + \bigl[ \tau \cdot c(z_{0}) \bigr] c'(z_{0})\\
			&\qquad + \bigl[ \tau \cdot \Gamma^{-}(u, z_{0}) \bigr] \frac{\partial \Gamma^{-}}{\partial z}(u, z_{0})
		\end{aligned}
	\end{equation}
	The Proposition \ref{PS5.2:sptreparam} gives the $\epsilon$-derivatives of $\theta_{\epsilon}$ and $\bar{\theta}			_{\epsilon}$ as
	\begin{equation} \label{ES5.2:dervcompcnt3}
		\left. \frac{\partial}{\partial \epsilon} \right\rvert_{\epsilon=0} \theta_{\epsilon}(u) =  \left( u \cdot 						\frac{\partial \Gamma}{\partial z}(u, z_{0})\right) \tau^{\tng}(u) = u \cdot \left[ c'(z_{0}) + \frac{\partial 					\Gamma^{-}}{\partial z}(u, z_{0})\right] \tau^{\tng}(u)  
	\end{equation}
	\begin{equation} \label{ES5.2:dervcompcnt4}
		\begin{aligned} 
			\left. \frac{\partial}{\partial \epsilon} \right\rvert_{\epsilon=0} \bar{\theta}_{\epsilon}(u) &= \left. 						\frac{\partial}{\partial \epsilon} \right\rvert_{\epsilon=0} \theta_{\epsilon}(-u) = (-u) \cdot \left[ c'(z_{0}) + 				\frac{\partial \Gamma^{-}}{\partial z}(-u, z_{0})\right] \tau^{\tng}(-u)\\
			&= u \cdot \left[ \frac{\partial \Gamma^{-}}{\partial z}(u, z_{0}) - c'(z_{0}) \right] \tau^{\tng}(u)		
		\end{aligned}
	\end{equation}
	Plugging the derivatives \eqref{ES5.2:dervcompcnt3} and \eqref{ES5.2:dervcompcnt4} into the equation					\eqref{ES5.2:dervcompcnt2} we get 
	\begin{align*}
		\left. \frac{\partial}{\partial \epsilon} \right\rvert_{\epsilon=0} c_{\epsilon}(u, z_{0}) &= \left( \frac{\partial 				\Gamma^{-}}{\partial z}(u, z_{0}) \cdot u \right) (\partial_{1}\Gamma^{-})(u, z_{0}) \bigl[\tau^{\tng}(u)\bigr]\\
		&+ \bigl[ \tau \cdot c(z_{0}) \bigr] c'(z_{0}) + \bigl[ \tau \cdot \Gamma^{-}(u, z_{0})\bigr] \frac{\partial 					\Gamma^{-}}{\partial z} (u, z_{0}). \qedhere
	\end{align*}
\end{proof}

\begin{definition} [The symmetry obstruction ]
	Given any tilt direction $\tau \in \mathbb{S}^{n-2}$ and $z_{0} \in I$, $\mathcal{O}_{\tau}(z_{0}, \epsilon)$ is central if 	and only if $c_{\epsilon} (\cdot, z_{0})$ is constant, or equivalently,
	\[
		\overline{\nabla}_{v} c_{\epsilon} \, \bigr\rvert_{u} = 0
	\]
	for every $u \in \mathbb{S}^{n-2}$ and $v \in \tng_{u} \mathbb{S}^{n-2}$, where the vector field $c_{\epsilon}$ along $		\mathbb{S}^{n-2}$ is extended radially constant in an open neighborhood of $\mathbb{S}^{n-2}$ in $\mathbb{R}^{n-1}$ 	to a vector field in an open subset of $\mathbb{R}^{n-1}$. When $\mathcal{O}_{\tau}(z_{0}, \epsilon)$ has central 			symmetry for all sufficiently small $\lvert \epsilon \rvert > 0$, which \emph{cop} requires, we have
	\begin{equation} \label{ES5.2:symobstr}																	0 = \left. \frac{\partial}{\partial \epsilon} \right\rvert_{\epsilon=0} \overline{\nabla}_{v} c_{\epsilon} \, \bigr				\rvert_{u} = \overline{\nabla}_{v} \left[ \left. \frac{\partial}{\partial \epsilon} \right\rvert_{\epsilon=0} c_{\epsilon}			(\cdot, z_{0}) \right]_{u}
	\end{equation}
	for every $u \in \mathbb{S}^{n-2}$ and $v \in \tng_{u} \mathbb{S}^{n-2}$. Therefore, the derivative in					\eqref{ES5.2:symobstr} forms an obstruction to central symmetry and hence to \emph{cop}.
\end{definition}

The spherical divergence of the vectorfield
\[
	\left. \frac{\partial}{\partial \epsilon} \right\rvert_{\epsilon=0} c_{\epsilon}(\cdot, z_{0})
\]
equals
\begin{align*}
	\divg_{\mathbb{S}^{n-2}} \left[ \left. \frac{\partial}{\partial \epsilon} \right\rvert_{\epsilon=0} c_{\epsilon}(\cdot, 			z_{0}) \right](u) &= \sum_{j=1}^{n-2} E_{j}(u) \cdot \nabla_{E_{j}(u)}^{\mathbb{S}^{n-2}} \left[ \left. \frac{\partial}		{\partial \epsilon} \right\rvert_{\epsilon=0} c_{\epsilon}(\cdot, z_{0}) \right]_{u}\\ 
				 &= \sum_{j=1}^{n-2} E_{j}(u) \cdot \overline{\nabla}_{E_{j}(u)} \left[ \left. \frac{\partial}{\partial 		\epsilon} \right\rvert_{\epsilon=0} c_{\epsilon}(\cdot, z_{0}) \right]_{u}
\end{align*}
where $\{E_{1}, \dotsc, E_{n-2}\}$ is the local orthonormal field constructed in Definition \ref{DS3.2:orthnmfield}. Since the equality above holds for all $(u, z) \in \mathbb{S}^{n-2} \times I$ we can define a function for each tilt direction $\tau \in \mathbb{S}^{n-2}$ as
\begin{align*}
	&f_{\tau} \colon \mathbb{S}^{n-2} \times I \to \mathbb{R}\\
	&f_{\tau}(u, z) = \divg_{\mathbb{S}^{n-2}} \left[ \left. \frac{\partial}{\partial \epsilon} \right\rvert_{\epsilon=0} 			c_{\epsilon}(\cdot, z) \right](u).
\end{align*}
The central ovaloid property of the tube $\mathcal{T}$ implies $f_{\tau}(u, z) = 0$ for every $u \in \mathbb{S}^{n-2}$, $z \in I$, and $\tau \in \mathbb{S}^{n-2}$.

\subsubsection{The partial differential equations}

Our goal is to construct two partial differential equations satisfied by the support function of the rectified tube $\mathcal{T}^{-}$. These equations will have important analytical and geometric consequences.

\begin{definition}[Transverse support function]
	For each $\lvert z \rvert < 1$, $\Gamma^{-}(\cdot, z)$ support parameterizes the horizontal cross-section $\mathcal{O}(z) - 	c(z)$ of the rectified tube $\mathcal{T}^{-}$. Therefore, there exists a smooth function
	\begin{align}
		&h \colon \mathbb{S}^{n-2} \times I \to \mathbb{R} \notag\\
		&h(u, z) = u \cdot \Gamma^{-}(u, z) \label{DS5.2:trsvsptfnct}
	\end{align}
	which, for each $\lvert z \rvert < 1$, yields the support function of the ovaloid $\mathcal{O}(z) - c(z)$. We call $h$ the 		transverse support function of $\mathcal{T}^{-}$.
\end{definition}

\begin{proposition} \label{PS5.2:pde}
	On a transversely convex tube $\mathcal{T}^{-}$ with central ovaloid property, the transverse support function $h$ of $		\mathcal{T}^{-}$ satisfies the two partial differential equations:
	\begin{equation} \label{PS5.2:pde1}
		\begin{aligned}
			&\frac{\partial h}{\partial z}(u, z) \nabla_{\mathbb{S}^{n-2}} \bigl[\, \Delta_{\mathbb{S}^{n-2}} h(\cdot, z) + 			(n-2) h(\cdot, z) \,\bigr] \, \Bigr\rvert_{u}\\
			&\ + \frac{\partial}{\partial z} \bigl[\, \Delta_{\mathbb{S}^{n-2}} h(u, z) + (n-2) h(u, z) \,\bigr] 						\nabla_{\mathbb{S}^{n-2}} h(\cdot, z) \, \Bigr\rvert_{u}\\
			&\ + 2 \left\{ h(u, z) \nabla_{\mathbb{S}^{n-2}} \frac{\partial h}{\partial z}(\cdot, z) \, \Bigr\rvert_{u} + 					\nabla_{\textstyle \nabla_{\mathbb{S}^{n-2}} \left. \frac{\partial h}{\partial z} (\cdot, z) \,\right\rvert_{u}}				^{\mathbb{S}^{n-2}} \nabla_{\mathbb{S}^{n-2}} h( \cdot, z) \, \Bigr\rvert_{u} \right\} = 0
		\end{aligned}
	\end{equation}
	\begin{equation} \label{PS5.2:pde2}
		\begin{aligned}
			h(u, z) \frac{\partial}{\partial z} \bigl[ \Delta_{\mathbb{S}^{n-2}} h(u, z) &+ (n-2)h(u, z) \bigr]\\
		 														    &- \bigl[ \Delta_{\mathbb{S}					^{n-2}} h(u, z) + (n-2) h(u, z) \bigr] \frac{\partial h}{\partial z}(u, z) = 0
		 \end{aligned}
	\end{equation}
\end{proposition}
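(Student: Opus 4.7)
The \emph{cop} hypothesis, via equation (\ref{ES5.2:symobstr}), forces the scalar $f_\tau(u,z) := \divg_{\mathbb{S}^{n-2}}\bigl[\partial_\epsilon|_{\epsilon=0}\, c_\epsilon(\cdot,z)\bigr](u)$ to vanish identically on $\mathbb{S}^{n-2}\times I$ for every tilt direction $\tau \in \mathbb{S}^{n-2}$. The plan is to substitute the transverse support function $h$ into the explicit formula (\ref{ES5.2:dervctx}) for $\partial_\epsilon|_{\epsilon=0}\, c_\epsilon$, take the spherical divergence, and then exploit linearity in $\tau$ to extract the two asserted identities.

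I first translate $V_\tau := \partial_\epsilon|_{\epsilon=0}\, c_\epsilon(\cdot, z)$ into the language of $h$. By Lemma~\ref{LS5.1:oddsptparam}, $\Gamma^{-}(\cdot, z)$ support parameterizes the centered ovaloid $\mathcal{O}(z) - c(z)$, so the support-parameterization formula \eqref{ES3.2:sptparam} gives $\Gamma^{-}(u,z) = \nabla_{\mathbb{S}^{n-2}} h(\cdot,z)|_u + h(u,z)\, u$. Differentiating in $z$ yields $\partial_z \Gamma^{-} = \nabla_{\mathbb{S}^{n-2}}\partial_z h + \partial_z h \cdot u$, in particular $u \cdot \partial_z \Gamma^{-} = \partial_z h$; differentiating in $u$ expresses $(\partial_1 \Gamma^{-})(u,z)[w]$ for tangent $w$ via the spherical Hessian of $h$. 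Combined with $\tau^{\tng}(u) = \tau - (\tau\cdot u) u$, these substitutions display $V_\tau$ as a linear function of $\tau \in \mathbb{R}^{n-1}$, whose only $u$-independent summand, $[\tau\cdot c(z)]\, c'(z)$, contributes nothing to $\divg$.

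The key algebraic identity is that $\divg_{\mathbb{S}^{n-2}}\bigl[\nabla_{\mathbb{S}^{n-2}}\phi + \phi\, u\bigr] = \Delta_{\mathbb{S}^{n-2}}\phi + (n-2)\phi$ for every smooth $\phi \colon \mathbb{S}^{n-2} \to \mathbb{R}$ --- precisely the expression on which both (\ref{PS5.2:pde1}) and (\ref{PS5.2:pde2}) are built. Applying this identity termwise, together with the product rule $\divg(fV) = \nabla_{\mathbb{S}^{n-2}} f \cdot V + f \divg V$, one recasts $\divg V_\tau$ as an expression $\vec{B}(u,z) \cdot \tau$ linear in $\tau$. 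Since it vanishes for every $\tau \in \mathbb{S}^{n-2}$, the vector $\vec{B}(u,z) \in \mathbb{R}^{n-1}$ must itself be zero; splitting $\vec{B} = 0$ along the orthogonal decomposition $\mathbb{R}^{n-1} = \mathbb{R}\, u \oplus \tng_u \mathbb{S}^{n-2}$ produces exactly two constraints, the $u$-component giving the scalar equation (\ref{PS5.2:pde2}), and the tangent component --- a gradient equation valued in $\tng_u \mathbb{S}^{n-2}$ --- giving (\ref{PS5.2:pde1}).

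The main obstacle is the divergence of the Hessian-type summand $(\partial_z\Gamma^{-} \cdot u)(\partial_1\Gamma^{-})[\tau^{\tng}(u)]$: differentiating $(\partial_1\Gamma^{-})[\tau^{\tng}(u)]$ along $\mathbb{S}^{n-2}$ produces second spherical covariant derivatives of $h$ coupled to shape-operator contributions from moving the unit vector $u$, and the piece $(\tau\cdot u) u$ inside $\tau^{\tng}$ generates further cross terms. Once these are organized and the identity for $\divg[\nabla_{\mathbb{S}^{n-2}}\phi + \phi\, u]$ is invoked repeatedly, every term in $\divg V_\tau$ is re-expressed in $h$, $\partial_z h$, and the distinguished combination $\Delta_{\mathbb{S}^{n-2}} h + (n-2) h$, at which point the $u$-parallel and $u$-perpendicular components of $\vec{B}$ separate cleanly into the two stated PDEs.
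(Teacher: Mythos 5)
Your proposal follows essentially the same route as the paper's proof: both start from the vanishing of $f_{\tau}=\divg_{\mathbb{S}^{n-2}}\bigl[\partial_{\epsilon}|_{\epsilon=0}c_{\epsilon}\bigr]$ forced by \emph{cop}, substitute $\Gamma^{-}=\nabla_{\mathbb{S}^{n-2}}h+hu$ into the centrix-derivative formula \eqref{ES5.2:dervctx}, discard the constant summand, and organize the resulting divergence as a linear function of $\tau$ whose components along $\mathbb{R}u$ and $\tng_{u}\mathbb{S}^{n-2}$ yield \eqref{PS5.2:pde2} and \eqref{PS5.2:pde1} respectively. The paper executes the computation you outline via a normal frame $\{E_{j}\}$ with $\nabla_{E_{i}}E_{j}|_{u}=0$, the identity $\overline{\nabla}N=\mathrm{id}$, and the Gauss equations to handle the curvature terms you flag in the Hessian summand, but the strategy and the key identity $\divg_{\mathbb{S}^{n-2}}[\nabla_{\mathbb{S}^{n-2}}\phi+\phi u]=\Delta_{\mathbb{S}^{n-2}}\phi+(n-2)\phi$ are the same.
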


\begin{notation}
	Unless stated otherwise, we will use the following abbreviations in the rest of the section:
	\begin{alignat*}{2}
		\nabla &= \nabla_{\mathbb{S}^{n-2}} & \quad \Delta &= \Delta_{\mathbb{S}^{n-2}}\\
		\nabla_{X} &= \nabla_{X}^{\mathbb{S}^{n-2}} \text{ for every $X \in \varkappa(\mathbb{S}^{n-2})$}& \divg &= 		\divg_{\mathbb{S}^{n-2}}\\
		N &= N_{\mathbb{S}^{n-2}} \text{ the outer unit normal field along $\mathbb{S}^{n-2}$} &&
	\end{alignat*}
\end{notation}

\begin{proof}
	Our aim is to show that for an given $u \in \mathbb{S}^{n-2}$, $\lvert z_{0} \rvert < 1$, and $\tau \in \mathbb{S}^{n-2}$
	\begin{align}
	0 &= f_{\tau}(u, z_{0}) = \tau^{\tng}(u) \cdot \left\{ \frac{\partial h}{\partial z}(u, z_{0}) \nabla \bigl[ \Delta h(\cdot, 			z_{0}) + (n-2) h(\cdot, z_{0}) \bigr] \Bigr\rvert_{u} \right.\\
	   &\phantom{= f_{\tau}(u, z_{0}) = \tau^{\tng}(u) \cdot \ } + \frac{\partial}{\partial z} \left[ \Delta h(u, z_{0}) + 		      (n-2)h(u, z_{0}) \right] \nabla h(\cdot, z_{0}) \, \bigr\rvert_{u} \notag\\
	   &\phantom{= f_{\tau}(u, z_{0}) = \tau^{\tng}(u) \cdot \ }+ 2 \left. \left( h(u, z_{0}) \nabla \frac{\partial h}{\partial 		      z}(\cdot, z_{0}) \, \bigr\rvert_{u} + \nabla_{\textstyle \nabla \left. \frac{\partial h}{\partial z} (\cdot, z_{0}) \, \right		      \rvert_{u}} \nabla h( \cdot, z_{0}) \, \bigr\rvert_{u} \right) \right\} \notag\\
	   &\phantom{= f_{\tau}(u, z_{0})} + (\tau \cdot u) \left\{ h(u, z_{0}) \frac{\partial}{\partial z} \bigl[ \Delta h(u, z_{0}) + 		      (n-2) h(u, z_{0}) \bigr] \right.\\
	   & \phantom{= f_{\tau}(u, z_{0}) = \tau^{\tng}(u) \quad}\left. - \bigl[ \Delta h(u, z_{0}) + (n-2) h(u, z_{0}) \bigr] 		       	       \frac{\partial h}{\partial z}(u, z_{0}) \right\}\notag
	\end{align}
	Since $\tau \in \mathbb{S}^{n-2}$ is arbitrary and the expressions in curly brackets do not depend on $\tau$ we can 			conclude that each one of the two expressions must vanish and thus we get the two partial differential equations 				\eqref{PS5.2:pde1} and \eqref{PS5.2:pde2}.
	
	Let $\{e_{1}, \dotsc, e_{n-2}\}$ be an orthonormal basis of $\tng _{u} \mathbb{S}^{n-2}$. Extend them to a local 			\hypertarget{ortfrmfld}{orthonormal frame field} $\{E_{1}, \dotsc, E_{n-2}\}$ such that $E_{i} \cdot E_{j} = \delta_{ij}$ 	and $\nabla_{E_{i}} E_{j} \, \bigl \lvert_{u} = 0$ hold for every $i$, $j = 1, \ldots, n-2$.
	
	Recall that for every $z_{0} \in I$, $\Gamma(\cdot, z_{0}) \colon \mathbb{S}^{n-2} \to \mathbb{R}^{n-1}$ support 		parameterizes $\mathcal{O}(z_{0})$ and $\Gamma^{-}(\cdot, z_{0})$ support parameterizes $\mathcal{O}(z_{0}) - 		c(z_{0})$ with support function $h(\cdot, z_{0})$.
	
	Throughout the proof assume that each $E_{j}$, $j=1, \dotsc, n-2$, $\Gamma(\cdot, z_{0})$, $\Gamma^{-}(\cdot, z_{0})$ 	and $N$ is extended radially constant in a neighborhood of $\mathbb{S}^{n-2}$ in $\mathbb{R}^{n-1}$ to a vectorfield in 	an open subset of $\mathbb{R}^{n-1}$. 
	
	Using the equation \eqref{ES5.2:dervctx} the function $f_{\tau}$ can be expanded as
	\begin{align*}
		0 &= f_{\tau}(u, z_{0}) = \divg \left[ \left. \frac{\partial}{\partial \epsilon} \right\rvert_{\epsilon=0} c_{\epsilon}			(\cdot, z_{0}) \right](u)\\
		& = \divg \left[ v \mapsto \bigl( \tau \cdot \Gamma^{-}(v, z_{0}) \bigr) \frac{\partial \Gamma^{-}}{\partial z}(v, 			z_{0}) \right](u) + \divg \bigl[ v \mapsto \bigl( \tau \cdot c(z_{0}) \bigr) c'(z_{0}) \bigr](u)\\
		&\quad + \divg \left[ v \mapsto \left( \frac{\partial \Gamma^{-}}{\partial z}(v, z_{0}) \cdot v \right) (\partial_{1} 			\Gamma^{-})(v, z_{0}) \bigl[ \tau^{\tng}(v) \bigr] \right](u).
	\end{align*}
	Clearly the second summand vanishes and $f_{\tau}(u, z_{0})$ is equal to
	\begin{equation} \label{ES5.2:pde}
		\begin{aligned}
			 0 = f_{\tau}(u, z_{0}) &= \underbrace{\divg \left[ v \mapsto \bigl( \tau \cdot \Gamma^{-}(v, z_{0}) \bigr) 									\frac{\partial \Gamma^{-}}{\partial z}(v, z_{0}) \right](u)}_{\crci}\\
			 				  &\quad + \underbrace{\divg \left[ v \mapsto \left( \frac{\partial \Gamma^{-}}{\partial z}			     (v, z_{0}) \cdot v \right) (\partial_{1} \Gamma^{-})(v, z_{0}) \bigl[ \tau^{\tng}(v) \bigr] \right](u)}_{\crcii}	
		\end{aligned}
	\end{equation}
	The computation of the part \crci \ of the equation \eqref{ES5.2:pde} is carried out as follows:
	\begin{equation} \label{ES5.2:pde1}
		\begin{aligned}
			&\divg \left[ v \mapsto \bigl( \tau \cdot \Gamma^{-}(v, z_{0}) \bigr) \frac{\partial \Gamma^{-}}{\partial z}(v, 				z_{0}) \right](u)\\
			&= \sum_{j=1}^{n-2} E_{j}(u) \cdot \overline{\nabla}_{E_{j}(u)} \left[ v \mapsto \bigl( \tau \cdot 						       \Gamma^{-} (v, z_{0})\bigr) \frac{\partial \Gamma^{-}}{\partial z}(v, z_{0}) \right]_{u}\\
			&= \underbrace{\sum_{j=1}^{n-2} E_{j}(u) \cdot \left\{ \underbrace{E_{j}\bigl[ v \mapsto \tau \cdot 					      \Gamma^{-}(v, z_{0})\bigr](u)}_{\crci} \frac{\partial \Gamma^{-}}{\partial z}(u, z_{0}) \right\}}_{\crcii}\\
			&\quad + \underbrace{\sum_{j=1}^{n-2}\bigl[ \tau \cdot \Gamma^{-}(u, z_{0}) \bigr] E_{j}(u) \cdot 					    \underbrace{\overline{\nabla}_{E_{j}(u)} \left[ v \mapsto \frac{\partial \Gamma^{-}}{\partial z}(v, z_{0}) 				     \right]_{u}}_{\crciii}}_{\crciv}
		\end{aligned}
	\end{equation}
	The part \crci \ of the equation \eqref{ES5.2:pde1} can be expanded as
	\begin{equation}
		\begin{aligned}
			&E_{j}\bigl[ v \mapsto \tau \cdot \Gamma^{-}(v, z_{0}) \bigr](u) = \tau \cdot \overline{\nabla}_{E_{j}(u)} 				\bigl[ v \mapsto \Gamma^{-}(v, z_{0}) \bigr] \, \bigr\rvert_{u}\\
			&=\tau \cdot \overline{\nabla}_{E_{j}(u)} \bigl[ v \mapsto \nabla h(\cdot, z_{0}) \, \bigr\rvert_{v} + h(v, 					z_{0})v \bigr] \, \bigr\rvert_{u}\\
			&= \tau \cdot \overline{\nabla}_{E_{j}(u)} \nabla h(\cdot, z_{0}) \, \bigr\rvert_{u} + \tau \cdot 						\overline{\nabla}_{E_{j}(u)} \bigl[v \mapsto h(v, z_{0})v \bigr] \, \bigr\rvert_{u}\\
			&=\tau^{\tng}(u) \cdot \nabla_{E_{j}(u)} \nabla h(\cdot, z_{0}) \, \bigr\rvert_{u} + \bigl[(\tau \cdot u)u \bigr] 				\cdot \overline{\nabla}_{E_{j}(u)} \nabla h(\cdot, z_{0}) \, \bigr\rvert_{u}\\
			&\qquad + \tau \cdot \left\{ E_{j}\bigr[h(\cdot, z_{0})\bigr](u)u + h(u, z_{0}) \overline{\nabla}_{E_{j}(u)}N \, 			\bigr\rvert_{u} \right\}
		\end{aligned}
	\end{equation}
	Recalling the fact that $\overline{\nabla} N \, \bigl\lvert_{u} = \mathrm{id}_{\tng_{u}\mathbb{S}^{n-2}}$ for 		each $u \in \mathbb{S}^{n-2}$ we obtain
	\begin{equation} \label{ES5.2:pde1;1}
		\begin{aligned}
			\underbrace{E_{j} \bigl[ v \mapsto \tau \cdot \Gamma^{-}(v, z_{0}) \bigr](u)}_{\crci} &= \tau^{\tng}(u) \cdot  			\underbrace{\nabla_{E_{j}(u)} \nabla h(\cdot, z_{0}) \, \bigr\rvert_{u}}_{\crcii}\\
			 &+(\tau \cdot u)  \underbrace{u \cdot \overline{\nabla}_{E_{j}(u)} \nabla h(\cdot, z_{0}) \, \bigr\rvert_{u}}				     _{\crciii}\\
			 & + \tau \cdot \bigl\{ E_{j}\bigl[h(\cdot, z_{0})\bigr](u)u + h(u, z_{0})E_{j}(u)\bigr\}.
		\end{aligned}
	\end{equation}
	The part \crcii \ of the equation \eqref{ES5.2:pde1;1} can be written as
	\begin{equation}
		\begin{aligned}
			&\nabla_{E_{j}(u)} \nabla h(\cdot, z_{0}) \, \bigr\rvert_{u} = \nabla_{E_{j}(u)} \left[ \sum_{k=1}^{n-2} 				E_{k} \bigl[ h(\cdot, z_{0}) \bigr] E_{k} \right]_{u}\\
													     &= \sum_{k=1}^{n-2} \nabla_{E_{j}(u)} 					\Bigl( E_{k} \bigl[ h(\cdot, z_{0}) \bigr] E_{k} \Bigr) \Bigr\rvert_{u}\\
													     &= \sum_{k=1}^{n-2} E_{j}\Bigl( E_{k} 					\bigl[ h(\cdot, z_{0}) \bigr] \Bigr)(u) E_{k}(u) + E_{k} \bigl[ h(\cdot, z_{0}) \bigr](u) 								\underbrace{\nabla_{E_{j}(u)}E_{k} \, \bigr\rvert_{u}}_{=0}\\
													     &=  \sum_{k=1}^{n-2} E_{j}\Bigl( E_{k} 					\bigl[ h(\cdot, z_{0}) \bigr] \Bigr)(u) E_{k}(u).		
		\end{aligned}
	\end{equation}
	The part \crciii \ of the equation \eqref{ES5.2:pde1;1} can be simplified as
	\begin{equation}
		\begin{aligned}
			u \cdot \overline{\nabla}_{E_{j}(u)} \nabla h(\cdot, z_{0}) \, \bigr\rvert_{u} &= \overline{\nabla}_{E_{j}(u)} 				\bigl[ N \cdot \nabla h(\cdot, z_{0}) \bigr] \, \big\rvert_{u} - \overline{\nabla}_{E_{j}(u)} N \, \bigr\rvert_{u} 				\cdot \nabla h(\cdot, z_{0}) \, \bigr\rvert_{u}\\
			&= - E_{j}(u) \cdot \nabla h(\cdot, z_{0}) \, \bigr\rvert_{u}.
		\end{aligned}
	\end{equation}
	Since $\tau \cdot E_{j}(u) = \tau^{\tng}(u) \cdot E_{j}(u)$ the part \crci \ of the equation \eqref{ES5.2:pde1;1} can be 		written as
	\begin{equation} \label{ES5.2:pde1;2}
	 	\begin{aligned}
	 		&E_{j} \bigl[ v \mapsto \tau \cdot \Gamma^{-}(v, z_{0}) \bigr](u)\\
			&= \tau^{\tng}(u) \cdot \left\{ \sum_{k=1}^{n-2} E_{j} \Bigl( E_{k} \bigl[ h(\cdot, z_{0}) \bigr] \Bigr)(u) 				     E_{k}(u) + h(u, z_{0}) E_{j}(u) \right\}\\
			&\phantom{\tau^{\tng}(u) \cdot} + (\tau \cdot u) \Bigl\{ E_{j} \bigl[ h(\cdot, z_{0}) \bigr](u) - E_{j}(u) \cdot 				   \nabla h(\cdot, z_{0}) \, \bigr\rvert_{u}\Bigr\}.
		\end{aligned}
	\end{equation}
	In order to compute the part \crcii \ of the equation \eqref{ES5.2:pde1} we first observe that
	\begin{align*}
		E_{j}(u) \cdot \frac{\partial \Gamma^{-}}{\partial z}(u, z_{0}) &= E_{j}(u) \cdot \left. \frac{\partial}{\partial z} 			\right	\rvert_{z=z_{0}} \bigl[ \nabla h(u, z) + h(u, z)u \bigr] \notag\\
		&= E_{j}(u) \cdot \left[ \nabla \frac{\partial h}{\partial z}(\cdot, z_{0}) \, \bigr\rvert_{u} + \frac{\partial h }{\partial 			z}(u, z_{0})u \right] = E_{j}(u) \cdot \nabla \frac{\partial h}{\partial z}(\cdot, z_{0}) \, \Bigr\rvert_{u}
	\end{align*}
	and then by using the equation \eqref{ES5.2:pde1;2} we can write
	\begin{align*}
		&\sum_{j=1}^{n-2} E_{j} \bigl[ v \mapsto \tau \cdot \Gamma^{-}(v, z_{0}) \bigr](u) \left[ E_{j}(u) \cdot \nabla 			\frac{\partial h}{\partial z}(\cdot, z_{0}) \, \Bigr\rvert_{u} \right]\\
		&= \sum_{j=1}^{n-2} \left\{ \tau^{\tng}(u) \cdot \left( \sum_{k=1}^{n-2} E_{j} \Bigl( E_{k} \bigl[ h(\cdot, z_{0}) 			\bigr] \Bigr)(u) E_{k}(u) + h(u, z_{0}) E_{j}(u) \right) \right.\\\
		&\hspace{1cm} + (\tau \cdot u) \Bigl( E_{j} \bigl[ h(\cdot, z_{0}) \bigr](u) - E_{j}(u) \cdot \nabla h(\cdot, z_{0}) \, 			\bigr\rvert_{u}\Bigr) \Biggr\} \left[ E_{j}(u) \cdot \nabla \frac{\partial h}{\partial z}(\cdot, z_{0}) \, \Bigr\rvert_{u} 			\right]
	\end{align*}
	\begin{align*}
		&=\sum_{k=1}^{n-2} \tau^{\tng}(u) \cdot \left( \sum_{j=1}^{n-2} \left[ E_{j} \cdot \nabla \frac{\partial h}{\partial 			z} (\cdot, z_{0}) \, \Bigr\rvert_{u} \right] E_{j} \Bigl( E_{k} \bigl[ h(\cdot, z_{0}) \bigr] \Bigr)(u) \right) E_{k}(u)\\
		&+ \tau^{\tng}(u) \cdot \sum_{j=1}^{n-2} \left[ E_{j}(u) \cdot \nabla \frac{\partial h}{\partial z}(\cdot, z_{0}) \, 			\Bigr\rvert_{u} \right] h(u, z_{0}) E_{j}(u)\\
		&+ (\tau \cdot u) \left( \sum_{j=1}^{n-2} \left[ E_{j}(u) \cdot \nabla \frac{\partial h}{\partial z}(\cdot, z_{0}) \, \Bigr			\rvert_{u} \right] E_{j} \bigl[ h(\cdot, z_{0}) \bigr](u) - \nabla h(\cdot, z_{0}) \, \bigr\rvert_{u} \cdot \nabla 				\frac{\partial h}{\partial z}(\cdot, z_{0}) \, \Bigr\rvert_{u} \right)
	\end{align*}
	where the last dot product is obtained as
	\begin{align*}
		&\sum_{j=1}^{n-2} \left[ E_{j}(u) \cdot \nabla \frac{\partial h}{\partial z} (\cdot, z_{0}) \, \Bigr\rvert_{u} \right] 			E_{j}(u) \cdot \nabla h(\cdot, z_{0}) \, \bigr\rvert_{u}\\
		&= \nabla h(\cdot, z_{0}) \, \bigr\rvert_{u} \cdot \sum_{j=1}^{n-2} \left[ E_{j}(u) \cdot \nabla \frac{\partial h}				{\partial z}(\cdot, z_{0}) \, \Bigr\rvert_{u} \right] E_{j}(u)\\
		&= \nabla h(\cdot, z_{0}) \, \bigr\rvert_{u} \cdot \nabla \frac{\partial h}{\partial z} (\cdot, z_{0}) \, \Bigr\rvert_{u}
	\end{align*}
	So we can conclude that the part \crcii \ of the equation \eqref{ES5.2:pde1} equals
	\begin{equation}\label{ES5.2:pde1;3}
		\begin{aligned}
			&\sum_{j=1}^{n-2} E_{j} \bigl[ v \mapsto \tau \cdot \Gamma^{-}(v, z_{0}) \bigr](u) E_{j}(u) \cdot 					\frac{\partial \Gamma^{-}}{\partial z}(u, z_{0}) \\
			&=\tau^{\tng}(u) \cdot \Biggl\{ \sum_{k=1}^{n-2} \Biggl( \sum_{j=1}^{n-2} \Biggl[ E_{j}(u) \cdot \nabla 				\frac{\partial h}{\partial z}(\cdot, z_{0}) \, \Bigr\rvert_{u} \Biggr] E_{j} \Bigl( E_{k} \bigl[ h(\cdot, z_{0}) 				\bigr] \Bigr)(u) \Biggr) E_{k}(u) \\
			&\hspace{2.5cm} + h(u, z_{0}) \nabla \frac{\partial h}{\partial z}(\cdot, z_{0}) \, \Bigr\rvert_{u} \Biggr\} 			\end{aligned}
	\end{equation}
	Note that $(\tau \cdot u)$ terms vanishes and \eqref{ES5.2:pde1;3} is obtained because
	\[
		\sum_{j=1}^{n-2} \left[ E_{j}(u) \cdot \nabla \frac{\partial h}{\partial z}(\cdot, z_{0}) \, \Bigr\rvert_{u} \right] 			E_{j}\bigl[ h(\cdot, z_{0}) \bigr](u) = \nabla h(\cdot, z_{0}) \, \bigr\rvert_{u} \cdot \nabla \frac{\partial h}{\partial z}		(\cdot, z_{0}) \, \Bigr\rvert_{u}.
	\]
	In oder to compute the part \crciv \ of the equation \eqref{ES5.2:pde1}, which is
	\[
		\bigl[ \tau \cdot \Gamma^{-}(u, z_{0}) \bigr] \sum_{j=1}^{n-2} E_{j}(u) \cdot \overline{\nabla}_{E_{j}(u)}\left[ v 			\mapsto \frac{\partial \Gamma^{-}}{\partial z}(v, z_{0}) \right]_{u}
	\]
	we first expand the part \crciii \ of the same equation as
	\begin{align*}
		&\overline{\nabla}_{E_{j}(u)} \left[ v \mapsto \frac{\partial \Gamma^{-}}{\partial z}(v, z_{0}) \right]_{u} = 				\overline{\nabla}_{E_{j}(u)} \left[ v \mapsto \nabla \frac{\partial h}{\partial z}(v, z_{0}) + \frac{\partial h}{\partial 			z}(v, z_{0})v \right]_{u}\\
		&= \overline{\nabla}_{E_{j}(u)} \left[ v \mapsto \nabla \frac{\partial h}{\partial z}(v, z_{0}) \right]_{u} + 				\overline{\nabla}_{E_{j}(u)} \left[ v \mapsto \frac{\partial h}{\partial z}(v, z_{0}) N_{v} \right]_{u}\\
		&= \overline{\nabla}_{E_{j}(u)} \left[ \nabla \frac{\partial h}{\partial z}(\cdot, z_{0}) \right]_{u} + E_{j} 				\left[ \frac{\partial h}{\partial z}(\cdot , z_{0}) \right] (u) \, u + \frac{\partial h}{\partial z} (u, z_{0}) 					\overline{\nabla}_{E_{j}(u)} N \, \bigr\rvert_{u} 
	\end{align*}
	and when we sum over $1 \leq j \leq n - 2$ the dot product with $E_{j}(u)$ we get
	\begin{align*}
		&\sum_{j=1}^{n-2} E_{j}(u) \cdot \overline{\nabla}_{E_{j}(u)} \left[ v \mapsto \frac{\partial \Gamma^{-}}{\partial 		z}(v, z_{0}) \right]_{u}\\
		&= \sum_{j-1}^{n-2} E_{j}(u) \cdot \overline{\nabla}_{E_{j}(u)} \left[ \nabla \frac{\partial h}{\partial z} (\cdot, 			z_{0}) \right]_{u} + \sum_{j=1}^{n-2} \frac{\partial h}{\partial z}(u, z_{0})\\
		&= \divg \left[ \nabla \frac{\partial h}{\partial z}(\cdot, z_{0}) \right](u) + (n - 2) \frac{\partial h}{\partial z}(u, 			z_{0})
		\\
		&= \Delta \frac{\partial h}{\partial z}(\cdot, z_{0}) \, \Bigr\rvert_{u} + (n - 2) \frac{\partial h}{\partial z}(u, z_{0}) = 		\left. \frac{\partial}{\partial z} \right\rvert_{z=z_{0}} \bigl[ \Delta h(\cdot, z) \, \bigr\rvert_{u} + (n - 2)h(u, z) \bigr].
	\end{align*}	
	Meanwhile, 
	\begin{align*}
		\tau \cdot \Gamma^{-}(u, z_{0}) &= \tau \cdot \bigl[ \nabla h(\cdot, z_{0}) \, \bigr\rvert_{u} + h(u, z_{0})u \bigr]\\
								&= \tau^{\tng}(u) \cdot \nabla h(\cdot, z_{0}) \, \bigr\rvert_{u} + (\tau \cdot u) h(u, 								      z_{0}).
	\end{align*}
	Therefore, the part \crciv \ of the equation \eqref{ES5.2:pde1} equals
	\begin{equation} \label{ES5.2:pde1;4}
	\begin{aligned}
			&\bigl[ \tau \cdot \Gamma^{-}(u, z_{0}) \bigr] \sum_{j=1}^{n-2} E_{j}(u) \cdot \overline{\nabla}_{E_{j}(u)}				\left[ v \mapsto \frac{\partial \Gamma^{-}}{\partial z}(v, z_{0}) \right]_{u}\\
			&= \tau^{\tng}(u) \cdot \left\{ \left. \frac{\partial}{\partial z} \right\rvert_{z=z_{0}} \bigl[ \Delta h(\cdot, z) \, 				\bigr\rvert_{u} + (n - 2)h(u, z) \bigr] \nabla h( \cdot, z_{0}) \, \bigr\rvert_{u}\right\}\\
			&\qquad +(\tau \cdot u) h(u, z_{0}) \left. \frac{\partial}{\partial z} \right\rvert_{z=z_{0}} \bigl[ \Delta h(\cdot, 				z) \, \bigr\rvert_{u} + (n - 2)h(u, z) \bigr] 
		\end{aligned}
	\end{equation}
	Then adding the equations \eqref{ES5.2:pde1;3} and \eqref{ES5.2:pde1;4} we can conclude that the part \crci \ of the 		equation \eqref{ES5.2:pde} equals
	
	\begin{equation} \label{ES5.2:pde1;fnl}
		\begin{aligned}
			&\divg \left[ v \mapsto \bigl( \tau \cdot \Gamma^{-}(v, z_{0}) \bigr) \frac{\partial \Gamma^{-}}{\partial z}(v, 				z_{0}) \right](u)\\
			&= \tau^{\tng}(u) \cdot \Biggl\{ \left. \frac{\partial}{\partial z} \right\rvert_{z=z_{0}} \bigl[ \Delta h(\cdot, z) \, 			\bigr\rvert_{u} + (n - 2) h(u, z) \bigr] \nabla h(\cdot, z_{0}) \, \bigr\rvert_{u}\\
			& \hspace{0.8cm}+ h(u, z_{0}) \nabla \frac{\partial h}{\partial z}( \cdot, z_{0}) \, \bigr\rvert_{u}\\
			&\hspace{0.8cm}+ \sum_{k=1}^{n-2} \left(\sum_{j=1}^{n-2} \left[ E_{j}(u) \cdot \nabla \frac{\partial h}				{\partial z}	(\cdot, z_{0}) \, \Bigr\rvert_{u} \right] E_{j}\Bigl( E_{k}\bigl[ h (\cdot, z_{0}) \bigr] \Bigr)(u)\right) 			E_{k}(u) \Biggr\}\\
			&+ (\tau \cdot u) \Biggl\{ h(u, z_{0}) \left. \frac{\partial}{\partial z} \right\rvert_{z=z_{0}} \bigl[ \Delta h(\cdot, 			z) \, \bigr\rvert_{u} + (n - 2) h(u, z) \bigr] \Biggr\}
		\end{aligned}
	\end{equation}
	
	Now we want to compute the part \crcii \ of the equation \eqref{ES5.2:pde}
	\begin{equation} \label{ES5.2:pde2}
		\divg \left[ v \mapsto \left( \frac{\partial \Gamma^{-}}{\partial z}(v, z_{0}) \cdot v \right) (\partial_{1}\Gamma^{-})			(v, z_{0}) \bigl[ \tau^{\tng}(v) \bigr] \right](u).
	\end{equation}
	Note that $v \in \mathbb{S}^{n-2} \mapsto \tau^{\tng}(v)$ is a vector field on $\mathbb{S}^{n-2}$ and $v \in \mathbb{S}	^{n-2} \mapsto \Gamma^{-}(v, z_{0})$ support parameterizes the ovaloid $\mathcal{O}(z_{0}) - c(z_{0})$, where for 		every $v \in \mathbb{S}^{n-2}$
	\[
		\tng_{\Gamma^{-}(v, z_{0})} \mathcal{O}(z_{0}) - c(z_{0}) = \tng_{v} \mathbb{S}^{n-2}.
	\]
	Thus we can conclude that $v \mapsto (\partial_{1}\Gamma^{-})(v, z_{0}) \bigl[ \tau^{\tng}(v) \bigr]$ is a vector field on $	\mathbb{S}^{n-2}$.
	\begin{align*}
		(\partial_{1}\Gamma^{-})(v, z_{0}) \bigl[ \tau^{\tng}(v) \bigr] &= \overline{\nabla}_{\tau^{\tng}(v)} \Gamma^{-}			(\cdot, z_{0}) \, \bigr\rvert_{v} = \overline{\nabla}_{\tau^{\tng}(v)} \bigl[ \nabla h(\cdot, z_{0}) + h(\cdot, z_{0}) N			\bigr] \, \bigr\rvert_{v}\\
		&= \overline{\nabla}_{\tau^{\tng}(v)} \nabla h(\cdot, z_{0}) \, \bigr\rvert_{v} + \overline{\nabla}_{\tau^{\tng}(v)} 			h(\cdot, z_{0})N \, \bigr\rvert_{v}\\
		&= \overline{\nabla}_{\tau^{\tng}(v)} \nabla h(\cdot, z_{0}) \, \bigr\rvert_{v} + \bigl[ \tau^{\tng}h(\cdot, z_{0}) 			\bigr](v) v + h(v, z_{0}) \overline{\nabla}_{\tau^{\tng}(v)} N \, \bigr\rvert_{v}\\
		&= \overline{\nabla}_{\tau^{\tng}(v)} \nabla h(\cdot, z_{0}) \, \bigr\rvert_{v} + \bigl[ \tau^{\tng}h(\cdot, z_{0}) 			\bigr](v) v + h(v, z_{0}) \tau^{\tng}(v).
	\end{align*}
	Since $(\partial_{1}\Gamma^{-})(v, z_{0}) \bigl[ \tau^{\tng}(v) \bigr] \in \tng_{v} \mathbb{S}^{n-2}$, $	\overline{\nabla}	_{\tau^{\tng}(v)} \nabla h(\cdot, z_{0}) \, \bigr\rvert_{v}$ decomposes as
	\[
		\overline{\nabla}_{\tau^{\tng}(v)} \nabla h(\cdot, z_{0}) \, \bigr\rvert_{v} = \nabla_{\tau^{\tng}(v)} \nabla h(\cdot, 			z_{0}) \, \bigr\rvert_{v} - \bigl[ \tau^{\tng}h(\cdot, z_{0}) \bigr](v)v
	\]
	and
	\begin{equation}
		\begin{aligned}
			(\partial_{1}\Gamma^{-})(v, z_{0}) \bigl[ \tau^{\tng}(v) \bigr] &= \Bigl( \nabla_{\tau^{\tng}(v)} \nabla h(\cdot, 			z_{0}) \, \bigr\rvert_{v} - \bigl[ \tau^{\tng}h(\cdot, z_{0}) \bigr](v)v \Bigr)\\ 
			&\quad +\bigl[ \tau^{\tng}h(\cdot, z_{0}) \bigr](v) v + h(v, z_{0}) \tau^{\tng}(v)\\
			&= \nabla_{\tau^{\tng}(v)} \nabla h(\cdot, z_{0}) \, \bigr\rvert_{v} + h(v, z_{0}) \tau^{\tng}(v).	
		\end{aligned}
	\end{equation}
	Therefore, the divergence in \eqref{ES5.2:pde2} can be expanded as
	\begin{equation} \label{ES5.2:pde2;1}
		\begin{aligned}
			&\sum_{j=1}^{n-2} E_{j}(u) \cdot \overline{\nabla}_{E_{j}(u)} \left[ v \mapsto \left( \frac{\partial 					\Gamma^{-}}{\partial z}(v, z_{0}) \cdot v \right) \Bigl(\nabla_{\tau^{\tng}(v)} \nabla h(\cdot, z_{0}) \, \bigr				\rvert_{v} + h(v, z_{0}) \tau^{\tng}(v) \Bigr) \right]_{u}\\
		 	&= \underbrace{\sum_{j=1}^{n-2} E_{j}(u) \cdot \Biggl\{ E_{j} \left[ v \mapsto \frac{\partial 						\Gamma^{-}}{\partial z} (v, z_{0}) \cdot v \right](u) \Bigl( \nabla_{\tau^{\tng}(u)} \nabla h(\cdot, z_{0}) \, 				\bigr\rvert_{u}+ h(u, z_{0}) \tau^{\tng}(u) \Bigr) \Biggr\}}_{\textstyle \crcvi}\\
			&+ \underbrace{\sum_{j=1}^{n-2} E_{j}(u) \cdot \Biggl\{ \left( \frac{\partial \Gamma^{-}}{\partial z}(u, 				z_{0}) \cdot u \right) \underbrace{\nabla_{E_{j}(u)} \nabla_{\tau^{\tng}} \nabla h(\cdot, z_{0}) \, \bigr					\rvert_{u}}_{\textstyle \crci} \Biggr\}}_{\textstyle \crcii + \crciii + \crciv + \crcv}\\
			&+ \underbrace{\sum_{j=1}^{n-2} E_{j}(u) \cdot \Biggl\{ \left( \frac{\partial \Gamma^{-}}{\partial z}(u, 				z_{0}) \cdot u\right) \underbrace{\overline{\nabla}_{E_{j}(u)} \bigl[ v \mapsto h(v, z_{0}) \tau^{\tng}(v) \bigr] 			\, \bigr\rvert_{u}}_{\textstyle \crcvii} \Biggr\}}_{\textstyle \crcviii}
		\end{aligned}
	\end{equation}
	Using the fact $\nabla N \, \bigr\rvert_{u} = \mathrm{id}_{\tng_{u}\mathbb{S}^{n-2}}$ and Gauss equations \cite[p.194]		{jJ11} we get
	\begin{equation}
		\begin{aligned}
			\nabla_{X} \nabla_{Y} Z - \nabla_{Y} \nabla_{X} Z - \nabla_{[X, Y]} Z &= - (S_{N}X  \cdot Z) S_{N}Y + 																		   (S_{N}Y \cdot Z) S_{N}X\\
																   &= - (X \cdot Z)Y + (Y \cdot Z)X
		\end{aligned}
	\end{equation}
	for all $X$, $Y$, $Z \in \varkappa(\mathbb{S}^{n-2})$. So if we let $X = E_{j}$, $Y = \tau^{\tng}$, and $Z = \nabla 		h(\cdot, z_{0})$ then the part \crci \ of the equation \eqref{ES5.2:pde2;1} can be written as
	\begin{equation} \label{ES5.2:pde2;2}
		\begin{split}
			\nabla_{E_{j}} \nabla_{\tau^{\tng}} \nabla h( \cdot, z_{0}) &= \underbrace{\nabla_{\tau^{\tng}} 						\nabla_{E_{j}} \nabla h(\cdot, z_{0})}_{\textstyle \crciii} + \underbrace{\nabla_{\underbrace{[E_{j}, 					\tau^{\tng}]}_{\textstyle \crci}} \nabla h(\cdot, z_{0})}_{\textstyle \crcii}\\
			&\underbrace{- \bigl[ S_{N}E_{j} \cdot \nabla h(\cdot, z_{0}) \bigr] S_{N}\tau^{\tng}}_{\textstyle \crciv}{} + 			\underbrace{\bigl[S_{N}\tau^{\tng} \cdot \nabla h(\cdot, z_{0})\bigr] S_{N}E_{j}}_{\textstyle \crcv}
		\end{split}
	\end{equation}
	
	The parts \crcii, \crciii, \crciv, and \crcv \ of the equation \eqref{ES5.2:pde2;2} yield the corresponding parts of the equation 	\eqref{ES5.2:pde2;1}. We will first compute the parts of the equation \eqref{ES5.2:pde2;2}. The part \crci \ of the equation 		\eqref{ES5.2:pde2;2} can be calculated as
	\begin{align*}
		&[E_{j}, \tau^{\tng}](u) = \nabla_{E_{j}(u)} \tau^{\tng} \, \bigr\rvert_{u} - \nabla_{\tau^{\tng}(u)} E_{j} \, \bigr			\rvert_{u} = \overline{\nabla}_{E_{j}(u)} \tau^{\tng} \, \bigr\rvert_{u} - \overline{\nabla}_{\tau^{\tng}(u)} E_{j} 			\bigr\rvert_{u}\\
		&= \overline{\nabla}_{E_{j}(u)} \bigl[ v \mapsto \tau - (\tau \cdot v)v \bigr] \, \bigr\rvert_{u} -						\bigl[ \underbrace{\nabla_{\tau^{\tng}(u)} E_{j} \, \bigr\rvert_{u}}_{= 0}{} + \bigl( \overline{\nabla}_{\tau^{\tng}			(u)} E_{j} \, \bigr\rvert_{u} \cdot u\bigr)u \bigr]
	\end{align*}
	\begin{align*}
		&= - \overline{\nabla}_{E_{j}(u)} \bigl( \tau \cdot N \bigr)N \bigr\rvert_{u} - \bigl( \overline{\nabla}_{\tau^{\tng}			(u)} E_{j} \, \bigr\rvert_{u} \cdot u \bigr)u\\
		&= - \Bigl( \bigl[ \tau \cdot \overline{\nabla}_{E_{j}(u)} N \, \bigr\rvert_{u}\bigr]u + (\tau \cdot u) 						\overline{\nabla}_{E_{j}(u)} N \, \bigr\rvert_{u} \Bigr)\\
		&\hspace{5.5cm} - \Bigl( \underbrace{\overline{\nabla}_{\tau^{\tng}(u)} E_{j} \cdot N \, \bigr\rvert_{u}}_{=0}{} - 		E_{j}(u) \cdot \overline{\nabla}_{\tau^{\tng}(u)} N \, \bigr\rvert_{u} \Bigr)u\\
		&= - \Bigl( \bigl[ \tau \cdot E_{j}(u) \bigr] u + (\tau \cdot u) E_{j}(u) \Bigr) + \bigl[ E_{j}(u) \cdot \tau^{\tng}(u)
 		\bigr]u =  - (\tau \cdot u) E_{j}(u).	
 	\end{align*}
	The parts \crciv \ and \crcv \ of the equation \eqref{ES5.2:pde2;2} can be written as
	\begin{align*}
		- \Bigl( S_{N} \bigl[ E_{j}(u) \bigr] \cdot \nabla h(\cdot, z_{0}) \, \bigr\rvert_{u} \Bigr) S_{N} \bigl[ \tau^{\tng}(u) 			\bigr] &= - \bigl( E_{j}(u) \cdot \nabla h(\cdot, z_{0}) \, \bigr\rvert_{u} \bigr) \tau^{\tng}(u)\\
		\Big( S_{N} \bigl[ \tau^{\tng}(u) \bigr] \cdot \nabla h(\cdot, z_{0}) \, \bigr\rvert_{u} \Bigr) S_{N} \bigl[ E_{j}(u)
 		\bigr] &= \bigl( \tau^{\tng}(u) \cdot \nabla h(\cdot, z_{0}) \, \bigr\rvert_{u} \bigr) E_{j}(u).	
	\end{align*}
	Using the part  \crci, the part \crcii \ of the equation \eqref{ES5.2:pde2;2} can be written as
	\begin{align*}
		\nabla_{[E_{j}, \tau^{\tng}]} \nabla h(\cdot, z_{0}) \, \bigr\rvert_{u} &= - (\tau \cdot u) \nabla_{E_{j}(u)} \nabla 			h(\cdot, z_{0}) \, \bigr\rvert_{u}\\
														     &= - (\tau \cdot u) \nabla_{E_{j}(u)} 				\left( \sum_{k=1}^{n-2} E_{k} \bigl[ h(\cdot, z_{0}) \bigr] E_{k} \right)_{u}.
	\end{align*}
	Recall that $u \in \mathbb{S}^{n-2} \mapsto \Gamma^{-}(u, z_{0})$ is the support parameterization of the ovaloid $			\mathcal{O}(z_{0}) - c(z_{0})$ given by
	\begin{equation} \label{RS5.2:pde}
		\Gamma^{-}(u, z_{0}) = \nabla h(\cdot, z_{0}) \, \bigr\rvert_{u} + h(u, z_{0})u
	\end{equation}
	and hence the part \crcii \ of the equation \eqref{ES5.2:pde2;1} equals
	\begin{align}
		&\sum_{j=1}^{n-2} E_{j}(u) \cdot \left[ \frac{\partial h}{\partial z}(u, z_{0}) \nabla_{[E_{j}, \tau^{\tng}](u)} \nabla 		h(\cdot, z_{0}) \, \bigr\rvert_{u} \right] \notag\\
		&= \frac{\partial h}{\partial z}(u, z_{0}) \sum_{j=1}^{n-2} E_{j}(u) \cdot \nabla_{-(\tau \cdot u)E_{j}(u)} \nabla 			h(\cdot, z_{0}) \, \bigr\rvert_{u} \notag\\
		&= - (\tau \cdot u) \frac{\partial h}{\partial z}(u, z_{0}) \sum_{j=1}^{n-2} E_{j}(u) \cdot \nabla_{E_{j}(u)} \nabla 			h(\cdot, z_{0}) \, \bigr\rvert_{u} \notag\\
		&= (\tau \cdot u) \left[ - \Delta h(\cdot, z_{0}) \, \bigr\rvert_{u} \frac{\partial h}{\partial z}(u, z_{0})\right]. 				\label{ES5.2:pde2;1p2}
	\end{align}
	Using the equation \eqref{RS5.2:pde} we can easily compute
	\begin{equation}
		\begin{aligned}
			\frac{\partial \Gamma^{-}}{\partial z}(u, z_{0}) \cdot u &= \frac{\partial h}{\partial z}(u, z_{0})\\
			E_{j} \left[ v \mapsto \frac{\partial \Gamma^{-}}{\partial z}(v, z_{0}) \cdot v \right](u) &= E_{j}						\left[ \frac{\partial h}{\partial z}(\cdot, z_{0}) \right](u).
		\end{aligned}
	\end{equation}
	The part \crcvii \ of the equation \eqref{ES5.2:pde2;1} is equal to
	\begin{align*}
		&\overline{\nabla}_{E_{j}(u)} \bigl[ v \mapsto h(v, z_{0}) \tau^{\tng}(v) \bigr] \, \bigr\rvert_{u}\\
		& = E_{j} \bigl[ h(\cdot, z_{0}) \bigr](u) \tau^{\tng}(u) + h(u, z_{0}) \overline{\nabla}_{E_{j}(u)} \bigl[v \mapsto 			\tau - (\tau \cdot v)v \bigr] \, \bigr\rvert_{u}\notag\\
		&= E_{j} \bigl[ h(\cdot, z_{0}) \bigr](u) \tau^{\tng}(u) + h(u, z_{0}) \bigl[ - \overline{\nabla}_{E_{j}(u)} (\tau \cdot 			N)N \, \bigr\rvert_{u} \bigr] \notag\\
		&= E_{j} \bigl[ h(\cdot, z_{0}) \bigr](u) \tau^{\tng}(u) - h(u, z_{0}) \Bigl[ \bigl( \tau \cdot \overline{\nabla}_{E_{j}			(u)}N \, \bigr\rvert_{u} \bigr)u + (\tau \cdot u) \overline{\nabla}_{E_{j}(u)} N \, \bigr\rvert_{u} \Bigr] \notag\\
		&= E_{j} \bigl[ h(\cdot, z_{0}) \bigr](u) \tau^{\tng}(u) - h(u, z_{0})  \bigl[ \tau \cdot E_{j}(u) \bigr]u - h(u, z_{0})			(\tau \cdot u) E_{j}(u).	
	\end{align*}	
	Using the part \crcvii, part \crcviii \ of the equation \eqref{ES5.2:pde2;1} can be computed as
	\begin{align}
		&\sum_{j=1}^{n-2} E_{j}(u) \cdot \left\{ \left( \frac{\partial \Gamma^{-}}{\partial z}(u, z_{0}) \cdot u \right) 				\overline{\nabla}_{E_{j}(u)} \bigl[ v \mapsto h(v, z_{0}) \tau^{\tng}(v) \bigr] \, \bigr\rvert_{u} \right\} \notag\\
		&= \frac{\partial h}{\partial z}(u, z_{0}) \left\{ \sum_{j=1}^{n-2} E_{j} \bigl[ h(\cdot, z_{0}) \bigr](u) 
		\bigl( \tau^{\tng}(u) \cdot E_{j}(u) \bigr) - (n - 2)(\tau \cdot u) h(u, z_{0}) \right\} \notag\\
		&= \tau^{\tng}(u) \cdot \left\{ \sum_{j=1}^{n-2} E_{j} \bigl[ h(\cdot, z_{0}) \bigr](u) \frac{\partial h}{\partial z}(u, 			z_{0}) E_{j}(u) \right\} \notag\\
		&\hspace{6cm} + (\tau \cdot u) \left[ - (n - 2) h(u, z_{0}) \frac{\partial h}{\partial z}(u, z_{0}) \right] \notag\\
		&= \tau^{\tng}(u) \left\{ \frac{\partial h}{\partial z}(u, z_{0}) \nabla h(\cdot, z_{0}) \, \bigr\rvert_{u} \right\} + (\tau 		\cdot u) \left[ - (n - 2) h(u, z_{0}) \frac{\partial h}{\partial z}(u, z_{0}) \right]. \label{ES5.2:pde2;1P3}
	\end{align}
	The part \crciii \ of the equation \eqref{ES5.2:pde2;1} equals
	\begin{align}
		&\sum_{j=1}^{n-2} E_{j}(u) \cdot \left\{ \frac{\partial h}{\partial z}(u, z_{0}) \nabla_{\tau^{\tng}(u)} 					\nabla_{E_{j}}\nabla h( \cdot, z_{0}) \, \bigr\rvert_{u} \right\} \notag\\
		&= \frac{\partial h}{\partial z}(u, z_{0}) \sum_{j=1}^{n-2} E_{j}(u) \cdot \nabla_{\tau^{\tng}(u)} \nabla_{E_{j}} 			\nabla h(\cdot, z_{0}) \, \bigr\rvert_{u} \notag\\
		\intertext{since $\nabla_{\tau^{\tng}(u)} E_{j} \, \bigr\rvert_{u} = 0$ and $\nabla_{\tau^{\tng}(u)} E_{j} \, \bigr			\rvert_{u} \cdot \nabla_{E_{j}(u)}\nabla h(\cdot, z_{0}) \, \bigr\rvert_{u} = 0$ we have}
		&= \frac{\partial h}{\partial z}(u, z_{0}) \sum_{j=1}^{n-2} \nabla_{\tau^{\tng}(u)} \bigl[ E_{j} \cdot 					\nabla_{E_{j}} \nabla h(\cdot, z_{0}) \bigr] \, \bigr\rvert_{u} \notag\\
		&= \frac{\partial h}{\partial z}(u, z_{0}) \nabla_{\tau^{\tng}(u)} \left( \sum_{j=1}^{n-2} E_{j} \cdot 					\overline{\nabla}_{E_{j}} \nabla h(\cdot, z_{0}) \right)_{u} \notag\\
		&= \frac{\partial h}{\partial z}(u, z_{0}) \nabla_{\tau^{\tng}(u)} \Delta h(\cdot, z_{0}) \, \bigr\rvert_{u} \notag\\
		&= \frac{\partial h}{\partial z}(u, z_{0}) \nabla_{\tau^{\tng}(u)} \bigr[ v \mapsto \Delta h(v, z_{0}) \bigr] \, \bigr			\rvert_{u} \notag\\ 		
		&= \tau^{\tng}(u) \cdot \left\{ \frac{\partial h}{\partial z}(u, z_{0}) \nabla  \bigl[ \Delta h(\cdot, z_{0}) \bigr] \, \bigr			\rvert_{u} \right\}. \label{ES5.2:pde2;1p3}
	\end{align}
	The part \crciv \ of the equation \eqref{ES5.2:pde2;1} can be written as
	\begin{align}
		&\sum_{j=1}^{n-2} E_{j}(u) \cdot \left\{ - \frac{\partial h}{\partial z}(u, z_{0}) \bigl[ E_{j}(u) \cdot \nabla h(\cdot, 		z_{0}) \, \bigr\rvert_{u} \bigr] \tau^{\tng}(u) \right\} \notag\\
		&= - \frac{\partial h}{\partial z}(u, z_{0}) \sum_{j=1}^{n-2} \bigl[ E_{j}(u) \cdot \nabla h(\cdot, z_{0}) \, \bigr			\rvert_{u} \bigr] E_{j}(u) \cdot \tau^{\tng}(u) \notag\\
		&= \tau^{\tng}(u) \cdot  \left\{ - \frac{\partial h}{\partial z}(u, z_{0}) \nabla h(\cdot, z_{0}) \, \bigr\rvert_{u} \right\}. 		\label{ES5.2:pde2;1p4}
	\end{align}
	The part \crcv \ of the equation \eqref{ES5.2:pde2;1} is equal to
	\begin{align}
		&\sum_{j=1}^{n-2} E_{j}(u) \cdot \left\{ \frac{\partial h}{\partial z}(u, z_{0}) \bigl[ \tau^{\tng}(u) \cdot \nabla 			h(\cdot, z_{0}) \, \bigr\rvert_{u} \bigr] E_{j}(u)\right\} \notag\\
		&= \frac{\partial h}{\partial z}(u, z_{0}) \bigl[ \tau^{\tng}(u) \cdot \nabla h(\cdot, z_{0}) \, \bigr\rvert_{u} \bigr] 			\sum_{j=1}^{n-2} E_{j}(u) \cdot E_{j}(u) \notag\\
		&= \tau^{\tng}(u) \cdot \left\{(n - 2) \frac{\partial h}{\partial z}(u, z_{0}) \nabla h(\cdot, z_{0}) \, \bigr\rvert_{u} 			\right\}. \label{ES5.2:pde2;1p5}
	\end{align}
	By summing the equations \eqref{ES5.2:pde2;1p2}, \eqref{ES5.2:pde2;1p3}, \eqref{ES5.2:pde2;1p4}, and 				\eqref{ES5.2:pde2;1p5} we get
	\begin{equation} \label{ES5.2:pde2;1P2}
		\begin{aligned}
		 	&\sum_{j=1}^{n-2} E_{j}(u) \cdot \left\{ \left( \frac{\partial \Gamma^{-}}{\partial z}(u, z_{0}) \cdot u\right) 				\nabla_{E_{j}(u)}\nabla_{\tau^{\tng}} \nabla h(\cdot, z_{0}) \, \bigr\rvert_{u} \right\}\\
			&= \tau^{\tng}(u) \cdot \Bigl\{ \frac{\partial h}{\partial z}(u, z_{0}) \nabla \bigl[ \Delta h(\cdot, z_{0}) \bigr] \, 			\bigr\rvert_{u} - \frac{\partial h}{\partial z}(u, z_{0}) \nabla h(\cdot, z_{0}) \, \bigr\rvert_{u}\\
			&\phantom{\tau^{\tng}(u) \cdot \Bigl\{ \frac{\partial h}{\partial z}(u, z_{0}) \nabla \bigl[ \Delta h(\cdot, z_{0}) 			\bigr] \, \bigr\rvert_{u} - {}}  + (n -2) \frac{\partial h}{\partial z}(u, z_{0}) \nabla h(\cdot, z_{0}) \, \bigr					\rvert_{u} \Bigr\}\\
			& \qquad + (\tau \cdot u) \left\{ - \frac{\partial h}{\partial z}(u, z_{0}) \Delta h(\cdot, z_{0}) \, \bigr\rvert_{u} 				\right\}
		\end{aligned}
	\end{equation}	
	The part \crcvi  \ of the equation \eqref{ES5.2:pde2;1} can be computed as
	\begin{align}
		&\sum_{j=1}^{n-2} E_{j}(u) \cdot \Biggl\{ E_{j} \left[ v \mapsto \frac{\partial 									\Gamma^{-}}{\partial z} (v, z_{0}) \cdot v \right](u) \Bigl( \nabla_{\tau^{\tng}(u)} \nabla h(\cdot, z_{0}) \, 				\bigr\rvert_{u}+ h(u, z_{0}) \tau^{\tng}(u) \Bigr) \Biggr\} \notag\\
		&= \sum_{j=1}^{n-2} E_{j}(u) \cdot \left\{ E_{j}\left[  \frac{\partial h}{\partial z}(\cdot, z_{0}) \right](u)  \ 				\nabla_{\tau^{\tng}(u)} \nabla h(\cdot, z_{0}) \, \bigr\rvert_{u}\right\}\notag\\
		& \hspace{4cm} + \sum_{j=1}^{n-2} E_{j}(u) \cdot \left\{ E_{j} \left[ \frac{\partial h}{\partial z}(\cdot, z_{0}) 			\right](u) \ 	h(u, z_{0}) \tau^{\tng}(u) \right\}\notag
	\end{align}
	\begin{align}
		&= \sum_{j=1}^{n-2} E_{j} \left[ \frac{\partial h}{\partial z}(\cdot, z_{0}) \right](u)  \ \Bigl\{ E_{j}(u) \cdot 				\nabla_{\tau^{\tng}(u)} \nabla h(\cdot, z_{0}) \, \bigr\rvert_{u} \Bigr\}\notag\\
		&\hspace{4cm} + \tau^{\tng}(u) \cdot \sum_{j=1}^{n-2} h(u, z_{0}) E_{j} \left[ \frac{\partial h}{\partial z}(\cdot, 			z_{0}) \right](u) \ E_{j}(u) \notag\\
		&= \sum_{j=1}^{n-2} E_{j} \left[ \frac{\partial h}{\partial z}(\cdot, z_{0}) \right](u)  \ \nabla_{\tau^{\tng}(u)} 			\bigl[ E_{j} \cdot \nabla h(\cdot, z_{0}) \bigr] \, \bigr\rvert_{u}  + \tau^{\tng}(u) \cdot h(u, z_{0}) \nabla \frac{\partial 		h}{\partial z}(\cdot, z_{0}) \, \bigr\rvert_{u} \notag\\
		&= \sum_{j=1}^{n-2} E_{j} \left[ \frac{\partial h}{\partial z}(\cdot, z_{0}) \right](u) \ \Bigl\{ \tau^{\tng}(u) \cdot 			\nabla \bigl[ E_{j} \cdot \nabla h(\cdot, z_{0}) \bigr] \, \bigr\rvert_{u} \Bigr\} \notag\\
		&\hspace{4cm} + \tau^{\tng}(u) \cdot h(u, z_{0}) \nabla \frac{\partial h}{\partial z}(\cdot, z_{0}) \, \bigr\rvert_{u}			\notag\\		
		&= \tau^{\tng}(u) \cdot \left\{ \sum_{j=1}^{n-2} E_{j} \left[ \frac{\partial h}{\partial z}(\cdot, z_{0}) \right](u) \ 			\nabla \bigl[ E_{j} \cdot \nabla h(\cdot, z_{0}) \bigr] \, \bigr\rvert_{u} + h(u, z_{0}) \nabla \frac{\partial h}{\partial z}		(\cdot, z_{0}) \, \bigr\rvert_{u} \right\}. \label{ES5.2:pde2;1P1}
	\end{align}
	By summing the equations \eqref{ES5.2:pde2;1P1}, \eqref{ES5.2:pde2;1P2} and \eqref{ES5.2:pde2;1P3} we find that the 		divergence at $u \in \mathbb{S}^{n-2}$
	\[
		\divg \left[ v \mapsto \left( \frac{\partial \Gamma^{-}}{\partial z}(v, z_{0}) \cdot v \right) (\partial_{1}\Gamma^{-})			(v, z_{0}) \bigl[ \tau^{\tng}(v) \bigr] \right] (u)
	\]
	equals
	\begin{equation} \label{ES5.2:pde2;fnl}
		\begin{aligned}
			&\tau^{\tng}(u) \cdot \biggl\{ \frac{\partial h}{\partial z}(u, z_{0}) \nabla \bigl[ \Delta h(\cdot, z_{0}) + (n-2) 				h(\cdot, z_{0}) \bigr] \, \bigr\rvert_{u} + h(u, z_{0}) \nabla \frac{\partial h}{\partial z}(\cdot, z_{0}) \, \bigr				\rvert_{u}\\
			&\phantom{\tau^{\tng}(u) \cdot \biggl\{ \frac{\partial h}{\partial z}(u, z_{0}) \nabla \bigl[ \Delta h(\cdot, 					z_{0})}+ \sum_{j=1}^{n-2} E_{j} \left[ \frac{\partial h}{\partial z}(\cdot, z_{0}) \right](u) \ \nabla \bigl[ E_{j} 			\cdot \nabla h(\cdot, z_{0}) \bigr] \, \bigr\rvert_{u} \biggr\}\\
			&\quad + (\tau \cdot u) \left\{  - \bigl[ \Delta h(\cdot, z_{0}) \, \bigr\rvert_{u} + (n -2)h(u, z_{0}) \bigr] 					\frac{\partial h}{\partial z}(u, z_{0})\right\}	.	
		\end{aligned}
	\end{equation}
	Finally the value of the function $f_{\tau}$ at $(u, z_{0})$ can be found by adding the equations \eqref{ES5.2:pde1;fnl} 		and \eqref{ES5.2:pde2;fnl}
	\begin{equation} \label{ES5.2:pde;fnl}
		\begin{aligned}
			&f_{\tau}(u, z_{0}) = \divg \left[ \left. \frac{\partial}{\partial \epsilon} \right\rvert_{\epsilon=0} c_{\epsilon}				( \cdot, z_{0}) \right](u)\\
			&= \divg\left[ v \mapsto \bigl( \tau \cdot \Gamma^{-}(v, z_{0}) \bigr) \frac{\partial \Gamma^{-}}{\partial z}(v, 			z_{0}) \right](u)\\
			&\hspace{4cm} + \divg \left[ v \mapsto \left( \frac{\partial \Gamma^{-}}{\partial z}(v, z_{0}) \cdot v \right)				(\partial_{1}\Gamma^{-})(v, z_{0}) \bigl[ \tau^{\tng}(u) \bigr] \right]
		\end{aligned}
	\end{equation}
	\begin{equation*}
		\begin{aligned}
			&= \tau^{\tng}(u) \cdot \Biggl\{ \left. \frac{\partial}{\partial z} \right\rvert_{z=z_{0}} \bigl[ \Delta h(\cdot, 				z_{0}) + (n -2)h(u, z) \bigr] \nabla h(\cdot, z_{0}) \, \bigr\rvert_{u}\\
			&\hspace{1.8cm} +2 h(u, z_{0}) \nabla \frac{\partial h}{\partial z}(u, z_{0}) \, \Bigr\rvert_{u} + 						\underbrace{\sum_{j=1}^{n-2} E_{j} \left[ \frac{\partial h}{\partial z}(\cdot, z_{0}) \right](u) \ \nabla 					\bigl[ E_{j} \cdot \nabla h(\cdot, z_{0}) \bigr](u)}_{\textstyle \crci} \\
			&\hspace{3.8cm} + \underbrace{\sum_{k=1}^{n-2} \left( \sum_{j=1}^{n-2} E_{j} \left[ \frac{\partial h}					{\partial z}(\cdot, z_{0})\right](u) \ E_{j} \bigl[ E_{k}h(\cdot, z_{0}) \bigr](u) \right)E_{k}(u)}_{\textstyle 				\crcii}\\
			&\hspace{3.8cm} + \frac{\partial h}{\partial z}(u, z_{0}) \nabla \bigl[ \Delta h(\cdot, z_{0}) + (n -2) h(\cdot, 				z_{0}) \bigr] \, \bigr\rvert_{u} \Biggr\}
		\end{aligned}
	\end{equation*}
	\begin{multline*}
		+ (\tau \cdot u)  \, \Biggl\{h(u, z_{0}) \left. \frac{\partial}{\partial z} \right\rvert_{z=z_{0}} \bigl[ \Delta 					h(\cdot, z_{0}) \, \bigr\rvert_{u} + (n -2) h(u, z) \bigr]\\
		 - \bigl[ \Delta h(\cdot, z_{0}) \, \bigr\rvert_{u} + (n - 2) h(u, z_{0}) \bigr] \frac{\partial h}{\partial z}(u, z_{0}) 			\Biggr\}
	\end{multline*}
	
	Now we want to expand the part \crci \ of the equation \eqref{ES5.2:pde;fnl} and show that it equals the part \crcii.
	
	\begin{align*}
		&\sum_{j=1}^{n-2} E_{j} \left[ \frac{\partial h}{\partial z}(\cdot, z_{0}) \right](u)  \ \nabla \bigl[ E_{j} \cdot \nabla 			h(\cdot, z_{0}) \bigr](u)\\
		&= \sum_{j=1}^{n-2} E_{j} \left[ \frac{\partial h}{\partial z}(\cdot, z_{0}) \right](u) \sum_{k=1}^{n-2} E_{k} 			\bigl[ E_{j} \cdot \nabla h(\cdot, z_{0}) \bigr](u) E_{k}(u)\\
		&= \sum_{j,k=1}^{n-2} E_{j} \left[ \frac{\partial h}{\partial z}(\cdot, z_{0}) \right](u) \Bigl\{							\underbrace{\nabla_{E_{k}(u)}	E_{j} \, \bigr\rvert_{u}}_{=0} \cdot \nabla h(\cdot, z_{0})\\
		&\hspace{5cm} + E_{j}(u) \cdot \nabla_{E_{k}(u)} \nabla h(\cdot, z_{0}) \, \bigr\rvert_{u}\Bigr\} E_{k}(u)\\
		&= \sum_{j,k=1}^{n-2} E_{j} \left[ \frac{\partial h}{\partial z}(\cdot, z_{0}) \right](u) \left\{ E_{j}(u) \cdot 				\sum_{l=1}^{n-2} \nabla_{E_{k}(u)} \Bigl( E_{l} \bigl[ h(\cdot, z_{0}) \bigr] E_{l} \Bigr)_{u} \right\} E_{k}(u)\\
		&= \sum_{j,k,l=1}^{n-2} E_{j} \left[ \frac{\partial h}{\partial z}(\cdot, z_{0})\right](u) \biggl\{ E_{j}(u) \cdot 			\Bigl[ E_{k} \bigl[ E_{l} (h (\cdot, z_{0})) \bigr](u) \ E_{l}(u)\\
		&\hspace{5cm} + E_{l} \bigl[h(\cdot, z_{0})\bigr](u) \underbrace{\nabla_{E_{k}(u)}E_{l} \, \bigr\rvert_{u}}				_{=0}\Bigr] \biggr\} E_{k}(u)\\
		&=\sum_{j,k=1}^{n-2} E_{j} \left[ \frac{\partial h}{\partial z}(\cdot, z_{0}) \right](u) \left\{ \sum_{l=1}^{n-2} 			E_{j}(u) \cdot \Bigl[ E_{k} \bigl[ E_{l}(h (\cdot, z_{0})) \bigr](u) \Bigr] E_{l}(u)\right\} E_{k}(u)
	\end{align*}
	\begin{align*}
		&\mspace{-58mu} = \sum_{j,k=1}^{n-2} E_{j} \left[ \frac{\partial h}{\partial z}(\cdot, z_{0}) \right](u) \ E_{k} 			\bigl[ E_{j}(h(\cdot, z_{0})) \bigr](u) \, E_{k}(u)\\
		&\mspace{-58mu} = \sum_{k=1}^{n-2} \left\{ \sum_{j=1}^{n-2} E_{j} \left[ \frac{\partial h}{\partial z}( \cdot, 			z_{0}) \right](u) \ E_{k} \bigl[ E_{j}(h(\cdot, z_{0})) \bigr](u) \right\} E_{k}(u)\\
		& \mspace{-58mu} = \sum_{k=1}^{n-2} \left\{ \sum_{j=1}^{n-2} E_{j} \left[ \frac{\partial h}{\partial z}( \cdot, 			z_{0}) \right](u) \ E_{j} \bigl[ E_{k}(h(\cdot, z_{0})) \bigr](u) \right\} E_{k}(u).
	\end{align*}
	The change of order of differentiation at the last step is permissible because for every smooth function $f \in C^{\infty}		(S^{n-2})$, $u \in \mathbb{S}^{n-2}$ and for every $j$, $k = 1, \dotsc n-2$
	\begin{equation} \label{ES5.2:orddif}
		\begin{aligned}
			\bigl( [E_{j}, E_{k}] f \bigr)(u) &= \bigl[ \nabla_{E_{j}(u)} E_{k} \, \bigr\rvert_{u} - \nabla_{E_{k}(u)} E_{j} 			\, \bigr\rvert_{u} \bigr] f = 0 \\
			E_{j}(E_{k}f)(u) &- E_{k}(E_{j}f)(u) = 0
		\end{aligned}
	\end{equation}
	So we can conclude the parts \crci \ and \crcii \ of the equation \eqref{ES5.2:pde;fnl} are equal and now we want to show 		that they are also equal to 
	\begin{equation} \label{ES5.2:pde2;3}
		\nabla_{\textstyle \nabla \left. \frac{\partial h}{\partial z}(\cdot, z_{0}) \right\rvert_{u}} \nabla h(\cdot, z_{0}) \, \bigr		\rvert_{u}.
	\end{equation}
	\begin{align*}
		&\nabla_{\textstyle \nabla \left. \frac{\partial h}{\partial z}(\cdot, z_{0}) \right\rvert_{u}} \nabla h(\cdot, z_{0}) \, 			\bigr	\rvert_{u} = \sum_{j=1}^{n-2} \left[ E_{j}(u) \cdot \nabla \frac{\partial h}{\partial z}(\cdot, z_{0}) \, \Bigr			\rvert_{u} \right] \nabla_{E_{j}(u)} \bigl[ \nabla h(\cdot, z_{0}) \bigr] \, \bigr\rvert_{u}\\
		&= \sum_{j=1}^{n-2}E_{j} \left[ \frac{\partial h}{\partial z}(\cdot, z_{0}) \right] (u) \sum_{k=1}^{n-2} 					\nabla_{E_{j}(u)} \Bigl( v \mapsto E_{k} \bigl[ h(\cdot, z_{0}) \bigr](v)  \, E_{k}(v) \Bigr)_{u} 
	\end{align*}
	\begin{align*}
		&= \sum_{j,k=1}^{n-2} E_{j} \left[ \frac{\partial h}{\partial z}(\cdot, z_{0}) \right](u) \Bigl\{ E_{j} \Bigl[ E_{k} 			\bigl[ h(\cdot, z_{0}) \bigr] \Big](u)  \, E_{k}(u) + E_{k} \bigl[ h(\cdot, z_{0}) \bigr](u)  \underbrace{\nabla_{E_{j}			(u)} E_{k} \, \bigr\rvert_{u}}_{=0}\Bigr\}\\
		&= \sum_{j,k=1}^{n-2} E_{j} \left[ \frac{\partial h}{\partial z}(\cdot, z_{0}) \right](u) \ E_{j} \Bigl[ E_{k} 				\bigl[ h(\cdot, z_{0}) \bigr] \Big](u) \, E_{k}(u)\\
		&= \sum_{k=1}^{n-2} \left\{ \sum_{j=1}^{n-2} E_{j} \left[ \frac{\partial h}{\partial z}(\cdot, z_{0}) \right] (u) \  			E_{j} \Bigl[ E_{k} \bigl[ h(\cdot, z_{0}) \bigr] \Bigr] (u) \right\} E_{k}(u).
	\end{align*}
	
	Therefore, by replacing the parts \crci \ and \crcii \ of the equation \eqref{ES5.2:pde;fnl} by their common value 			\eqref{ES5.2:pde2;3} we get 
	
	\begin{equation} \label{ES5.2:pde;fnll}
		\begin{aligned}
			0 &= f_{\tau}(u, z_{0}) = \tau^{\tng}(u)  \cdot \biggl\{ \frac{\partial h}{\partial z}(u, z_{0}) \nabla \bigl[ \Delta 				h(\cdot, z_{0}) + (n -2) h(\cdot, z_{0}) \bigr] \, \bigr\rvert_{u}\\
			   &\hspace{3.3cm}+ \left. \frac{\partial}{\partial z} \right\rvert_{z=z_{0}} \bigl[ \Delta h(\cdot, 						       z_{0}) + (n -2)h(u, z) \bigr] \nabla h(\cdot, z_{0}) \, \bigr\rvert_{u}\\
			   &\hspace{3.3cm}+2 \left( h(u, z_{0}) \nabla \frac{\partial h}{\partial z}(\cdot, z_{0}) \, \Bigr\rvert_{u} + 				      \nabla_{\textstyle \nabla \left. \frac{\partial h}{\partial z}(\cdot, z_{0}) \, \right\rvert_{u}} \nabla h (\cdot, 				      z_{0}) \, \bigr\rvert_{u}\right) \bigg\}\\
		&\phantom{f_{\tau}(u, z_{0}) =} + (\tau \cdot u)  \, \Biggl\{h(u, z_{0}) \left. \frac{\partial}{\partial z} \right				\rvert_{z=z_{0}} \bigl[ \Delta h(\cdot, z_{0}) \, \bigr\rvert_{u} + (n -2) h(u, z) \bigr]\\
		 &\phantom{f_{\tau}(u, z_{0}) =} \hspace{3.8cm} - \bigl[ \Delta h(\cdot, z_{0}) \, \bigr\rvert_{u} + (n - 2) h(u, 			z_{0}) \bigr] \frac{\partial h}{\partial z}(u, z_{0}) \Biggr\}.
		\end{aligned}
	\end{equation}
	As we mentioned in the beginning the expressions in the curly brackets do not depend on $\tau \in \mathbb{S}^{n-2}$,		$f_{\tau}(u, z_{0}) = 0$ for all $\tau \in \mathbb{S}^{n-2}$ and hence these expressions must individually vanish for each 	$u \in \mathbb{S}^{n-2}$ and $z_{0} \in I$.
\end{proof}

\begin{proposition}[Splitting Lemma] \label{PS5.2:split}
	If a transversely convex tube $\mathcal{T}$ in \emph{standard position} has cop, then its rectification $\mathcal{T}^{-}$ splits.
\end{proposition}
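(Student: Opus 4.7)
To prove Proposition \ref{PS5.2:split}, my plan is to extract from PDE \eqref{PS5.2:pde2} a strictly elliptic linear equation on the compact manifold $\mathbb{S}^{n-2}$ satisfied by the ratio $w(u; z) := h(u, z)/h(u, z_0)$, and then apply Harnack's inequality to conclude $w(\cdot; z)$ is constant for each $z$, which yields the splitting.

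First I observe that $h > 0$ throughout $\mathbb{S}^{n-2} \times I$, since $h(\cdot, z)$ is the support function of a strictly convex body whose center of mass (the origin) lies in its interior. Dividing PDE \eqref{PS5.2:pde2} by $h^2$ rewrites it as
\[
\frac{\partial}{\partial z}\!\left(\frac{\Delta_{\mathbb{S}^{n-2}} h + (n-2) h}{h}\right) = 0,
\]
so $(\Delta h + (n-2) h)/h$ depends only on $u$. Equivalently, there is a smooth $\phi \colon \mathbb{S}^{n-2} \to \mathbb{R}$ with $\Delta_{\mathbb{S}^{n-2}} h(\cdot, z) = (\phi(u) - (n-2))\, h(\cdot, z)$ for every $z \in I$; thus $h(\cdot, z)$ and $h_0 := h(\cdot, z_0)$ solve the \emph{same} linear elliptic equation on $\mathbb{S}^{n-2}$.

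Next set $w(u; z) := h(u, z)/h_0(u)$ and expand $\Delta(w h_0)$ via the product rule. Because both factors share the same zeroth-order coefficient, the $(\phi(u) - (n-2))$-term cancels, leaving
\[
\Delta_{\mathbb{S}^{n-2}} w + 2\, \nabla_{\mathbb{S}^{n-2}} \log h_0 \cdot \nabla_{\mathbb{S}^{n-2}} w = 0.
\]
This is a homogeneous, strictly elliptic second-order equation on the closed manifold $\mathbb{S}^{n-2}$ with smooth bounded coefficients (by compactness of $\mathbb{S}^{n-2}$ and positivity of $h_0$). Constants solve it, so applying Harnack's inequality to the positive solution $w - \min_{\mathbb{S}^{n-2}} w + \varepsilon$ and sending $\varepsilon \to 0^{+}$ gives $\sup w = \inf w$; hence $w(\cdot; z) \equiv r(z)$ for some smooth $r \colon I \to (0, \infty)$. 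Therefore $h(u, z) = r(z) h_0(u)$, and substituting into the support-parameterization identity $\Gamma^{-}(u, z) = \nabla_{\mathbb{S}^{n-2}} h(\cdot, z)\big|_{u} + h(u, z)\, u$ yields $\Gamma^{-}(u, z) = r(z)\,\Gamma_0^{-}(u)$. Since $\Gamma^{-}(\cdot, z)$ parameterizes the centered ovaloid $\mathcal{O}(z) - c(z)$, we may take $\alpha := \Gamma^{-}$ and $\alpha_0 := \Gamma_0^{-}$ to realize the splitting $\alpha(u, z) = r(z) \alpha_0(u)$.

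The main obstacle is securing the $z$-independence of $\phi(u)$ in the step $\Delta h(\cdot, z) = (\phi(u) - (n-2)) h(\cdot, z)$: it is precisely this $z$-independence that produces the cancellation of the zeroth-order term when computing $\Delta(w h_0)$, yielding a \emph{homogeneous} elliptic equation whose only solutions on the closed manifold $\mathbb{S}^{n-2}$ are constants. If PDE \eqref{PS5.2:pde2} were replaced by anything weaker, the equation for $w$ would inherit a nontrivial zeroth-order coefficient and constancy of $w$ would no longer follow from Harnack alone.
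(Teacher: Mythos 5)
Your proposal is correct, and it follows the paper's overall strategy --- use only the second PDE \eqref{PS5.2:pde2} to deduce that $\bigl(\Delta_{\mathbb{S}^{n-2}}h+(n-2)h\bigr)/h$ is independent of $z$, so that every $h(\cdot,z)$ solves one fixed strictly elliptic equation $Lh=0$ on $\mathbb{S}^{n-2}$, and then invoke Harnack to show positive solutions are unique up to scale. Where you genuinely diverge is in the uniqueness step. The paper keeps the zeroth-order term: it scales $h_0$ so that $\lambda_0 h_0-g\ge 0$ with equality somewhere, applies the local Harnack inequality of Gilbarg--Trudinger in a coordinate chart to the nonnegative solution $\lambda_0h_0-g$ of $L(\cdot)=0$, and concludes via an open-closed argument that the contact set is all of $\mathbb{S}^{n-2}$. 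You instead perform the ground-state (ratio) transform $w=h(\cdot,z)/h_0$, which kills the zeroth-order coefficient and leaves the homogeneous drift equation $\Delta w+2\nabla\log h_0\cdot\nabla w=0$ on the closed manifold; constancy of $w$ then follows at once. Your route is arguably cleaner --- it makes transparent \emph{why} uniqueness holds (no zeroth-order term means the strong maximum principle applies directly, so you could even dispense with Harnack altogether) --- at the cost of needing $h_0>0$ globally before the transform, which you correctly secure from centrality of the horizontal cross-sections. One small caveat: Harnack's inequality as cited is local (on balls in charts), so your final inequality $\sup w\le C\inf w$ on all of $\mathbb{S}^{n-2}$ tacitly uses the standard chaining of local Harnack constants over a finite cover of the compact sphere; either say this or replace the Harnack step by the strong maximum principle, which needs no such chaining.
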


\begin{proof}
	Let $h \colon \mathbb{S}^{n-2} \times (-1, 1) \to \mathbb{R}$ be the transverse support function of the rectified tube $		\mathcal{T}^{-}$ and $\Gamma^{-} \colon \mathbb{S}^{n-2} \times I \to \mathbb{R}^{n}$ be the support map of $\mathcal{T}		^{-}$ given by
	\[
		\Gamma^{-}(u, z) = \nabla_{\mathbb{S}^{n-2}} h(u, z) + h(u, z)u.
	\]
	If we can show that $h(u, z) = r(z)h_{0}(u)$ is the product of $r \colon (-1, 1) \to (0, \infty)$ and the support function $h 		\colon \mathbb{S}^{n-2} \to \mathbb{R}$ of a fixed ovaloid, then 
	\begin{align*}
		\Gamma^{-}(u, z) &= \nabla_{\mathbb{S}^{n-2}} \bigl[ r(z)h_{0}(u) \bigr] + \bigl[ r(z)h_{0}(u) \bigr]u\\
					   &= r(z) \bigl[ \nabla_{\mathbb{S}^{n-2}} h_{0}(u) + h_{0}(u)u \bigr] = r(z) \Gamma^{\circ}(u)
	\end{align*}
	where $\Gamma^{\circ}$ is the support parameterization of a fixed ovaloid.
	
	According to Proposition \ref{PS5.2:pde}, the transverse support function $h$ satisfies the partial differential equation 		\eqref{PS5.2:pde2}, which is equivalent to
	\begin{equation} \label{ES5.2:splteqvcnd1}
		\frac{\partial}{\partial z} \left( \frac{\Delta h(u, z) + (n -2) h(u, z)}{h(u, z)} \right) = 0 \text{ for every $u \in 				\mathbb{S}^{n-2}$ and $\lvert z \rvert < 1$.}
	\end{equation}
	At height $z = 0$, using \cite[p.119]{rS14}, we have
	\[
		 \Delta h(u, 0) + (n -2) h(u, 0) = \sum_{i=1}^{n-2} \frac{1}{k_{i}(u)}	
	\]
	for every $u \in \mathbb{S}^{n-2}$, where $k_{i}$ is the $\mathrm{i}^{\mathrm{th}}$ principal curvature of the ovaloid $	\mathcal{O}(0)$, $i = 1, \dotsc, n-2$. Therefore, the equation \eqref{ES5.2:splteqvcnd1} is equivalent to 
	\begin{equation} \label{ES5.2:splteqvcnd2}
		\begin{aligned}
		\frac{\Delta h(u, z) + (n -2) h(u, z)}{h(u, z)} &= \frac{\Delta h(u, 0) + (n -2) h(u, 0)}{h(u, 0)}\\
										&= \frac{1}{h(u, 0)} \sum_{i=1}^{n-2} \frac{1}{k_{i}(u)} =: q^{2}(u)
		\end{aligned}
	\end{equation}
	 for every $u \in \mathbb{S}^{n-2}$ and $\lvert z \rvert < 1$. And the condition \eqref{ES5.2:splteqvcnd2} is equivalent to
	\begin{equation} \label{ES5.2:splteqvcnd3}
		\Delta h(u, z) + (n - 2) h(u, z) = q^{2}(u) \, h(u, z) \text{ for every $u \in \mathbb{S}^{n-2}$ and $\lvert z \rvert < 1$.}
	\end{equation} 
	
	Define the operator $L = \Delta + (n - 2 - q^{2})$, then the transverse support function h satisfies
	\[
		L h (\cdot, z) = 0 \text{ for every $\lvert z \rvert < 1$.}	
	\]
	Let $h_{0} = h(\cdot, 0)$ be the support function of the ovaloid $\mathcal{O}(0)$. Our aim is to show that $h(u, z) = r(z) 		h_{0}(u)$ for some positive function $r \colon (-1, 1) \to (0, \infty)$ of height $z$. 
	
	By construction $L h_{0}(u) = 0$ for each $u \in \mathbb{S}^{n-2}$ and assume that there exists $g \in C^{\infty}			(\mathbb{S}^{n-2}) \setminus \{ 0 \}$ satisfying $g \geq 0$ and $L g(u) = 0$ for each $u \in \mathbb{S}^{n-2}$. Since 		$h_{0}$ is a strictly positive function, $h_{0}$ and $g$ are continuous on a compact set, we can fix $\lambda > 0$ so that $	\lambda h_{0} - g > 0$ on $\mathbb{S}^{n-2}$. Define the positive constants $\lambda_{0} > 0$ and $t_{0} > 0$ as follows:
	\[
		0 < \max_{u \in \mathbb{S}^{n-2}} \frac{g(u)}{\lambda h_{0}(u)} = t_{0} \qquad \lambda_{0} = \lambda t_{0}.
	\]
	The subset $U \subseteq \mathbb{S}^{n-2}$ defined as
	\[
		U = \bigl\{ u \in \mathbb{S}^{n-2} \colon \lambda_{0}h_{0}(u) - g(u) = 0 \bigr\}
	\]
	is nonempty because there exists $u \in \mathbb{S}^{n-2}$ with
	\[
		\frac{g(u)}{\lambda h_{0}(u)} = t_{0} \text{ and hence } \lambda_{0}h_{0}(u) - g(u) = 0.
	\]
	The subset $U$ is also closed because $\lambda_{0}h_{0} - g$ is a continuous function. Now we want to show that $U$ is 	also open and then conclude that $\lambda_{0} h_{0} = g$.
	
	Let $u \in U$, and consider a local coordinate system $(V, \phi)$ of $\mathbb{S}^{n-2}$ about $u$
	\begin{align*}
		&\phi \colon V \subseteq \mathbb{S}^{n-2} \to \mathbb{R}^{n-2}\\
		&\phi = (x_{1}, \dotsc, x_{n-2}), \ \phi(u) = 0.
	\end{align*}
	Consider the function $(\lambda_{0}h_{0} - g) \circ \phi^{-1} \colon \phi(V) \subseteq \mathbb{R}^{n-2} \to \mathbb{R}		$, which is smooth and satisfies $(\lambda_{0}h_{0} - g) \circ \phi^{-1} \geq 0$ because
	\[
		\lambda_{0}h_{0}(v) - g(v) = \lambda h_{0}(v) \left( t_{0} - \frac{g(v)}{\lambda h_{0}(v)} \right) \geq 0
	\]
	for every $v \in \mathbb{S}^{n-2}$.
	
	The operator $L = \Delta + (n - 2 -q^{2})$, under the local coordinate system $(V, \phi)$, is strictly elliptic in the domain 		$\phi(W) \subseteq \mathbb{R}^{n-2}$ for some $u \in W \Subset V$. Therefore, the operator $L$ and the smooth function $	(\lambda_{0}h_{0} - g) \circ \phi^{-1}$ in $\phi(W) \subseteq \mathbb{R}^{n-2}$ satisfy the hypotheses of Harnack's 		inequality \cite[p.199]{GT98} and hence exists a positive constant $C > 0$ depending only on $\phi(W)$ and $L$ so that for 	some closed  ball $\mathbb{B}^{n}(0, R) \subseteq \phi(W)$
	\[
		0 \leq \sup_{\mathbb{B}^{n}(0, R)} (\lambda_{0}h_{0} - g) \circ \phi^{-1} \leq C \inf_{\mathbb{B}^{n}(0, R)} 			(\lambda_{0}h_{0} - g) \circ \phi^{-1} = 0
	\]
	and the last equality holds because $\phi^{-1}(0) = u$ and $\lambda_{0}h_{0}(u) - g(u) = 0$. So we can conclude that
	\[
		(\lambda_{0}h_{0} - g) \circ \phi^{-1} \, \Bigr\rvert_{\mathbb{B}^{n}(0, R)} = 0 \qquad (\lambda_{0}h_{0} - g) \, 			\Bigr\rvert_{\phi^{-1}\bigl[ \mathbb{B}^{n}(0, R) \bigr]} = 0.
	\]
	For the open ball $\mathbb{U}^{n}(0, R)$ of radius $R$ and centered at the origin, $\phi^{-1} \bigl[ \mathbb{U}^{n}(0, 		R) \bigr]$ is an open neighborhood of $u \in U$ in which $\lambda_{0}h_{0} - g$ vanishes identically and hence	 
	\[
		u \in \phi^{-1} \bigl[ \mathbb{U}^{n}(0, R) \bigr] \subseteq U.
	\]
	So we can conclude that $\lambda_{0}h_{0} = g$ on $\mathbb{S}^{n-2}$.
	
	Now for every $\lvert z \rvert < 1$, $h(\cdot, z) \colon \mathbb{S}^{n-2} \to \mathbb{R}$ is a positive, smooth function 		satisfying $\tilde{L}h(\cdot, z) \equiv 0$. So we can conclude that for every $\lvert z \rvert < 1$ there exists a positive 		number $r(z)$ with
	\[
		h(u, z) = r(z) h_{0}(u) \text{ for every $u \in \mathbb{S}^{n-2}$.} \qedhere
	\]
\end{proof}

\begin{proposition} \label{PS5.2:classofrcttube}
	Let $\mathcal{T}$ be a transversely convex tube in \emph{standard position} with cop. Then its rectification $\mathcal{T}^{-}$ is 		either
	\begin{enumerate}
		\item a cylinder over a central ovaloid, or
		\item affinely isomorphic to a hypersurface of revolution.
	\end{enumerate}
\end{proposition}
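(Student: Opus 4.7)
The plan is to substitute the product form $h(u,z)=r(z)h_0(u)$ furnished by the Splitting Lemma (Proposition~\ref{PS5.2:split}), with $r>0$ on $I$ and $h_0:=h(\cdot,0)$, into the first partial differential equation~\eqref{PS5.2:pde1} and read off a dichotomy matching the two alternatives in the conclusion. Writing $F_0:=\Delta h_0+(n-2)h_0$ on $\mathbb{S}^{n-2}$, one has $\partial_z h=r'h_0$, $\Delta h+(n-2)h=rF_0$, $\nabla h=r\nabla h_0$, and $\nabla\partial_z h=r'\nabla h_0$; direct substitution then makes every summand on the left-hand side of~\eqref{PS5.2:pde1} carry the common scalar factor $r(z)r'(z)$. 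What remains is the $z$-independent vector field
\[
    E(u):=h_0\,\nabla F_0+F_0\,\nabla h_0+2h_0\,\nabla h_0+2\,\nabla_{\nabla h_0}\nabla h_0
\]
on $\mathbb{S}^{n-2}$, and since $r>0$, the identity $r(z)r'(z)E(u)\equiv 0$ splits into either $r'\equiv 0$ on $I$ or $E\equiv 0$ on $\mathbb{S}^{n-2}$.

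In the first branch $h$ is $z$-independent, hence so is $\Gamma^-(u,z)=\nabla h_0|_u+h_0(u)u$, so $\mathcal T^-$ is a cylinder over the single horizontal cross-section $\Gamma^-(\mathbb{S}^{n-2},0)$. That cross-section is the translate $\mathcal O(0)-c(0)$ of the ovaloid cross-section $\mathcal O(0)$, which is central by \emph{cop}; translation preserves central symmetry, yielding case~(1).

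In the second branch the task is to show that $E\equiv 0$ forces $h_0$ to be the support function of an origin-centered ellipsoid. Rewriting $h_0\nabla F_0+F_0\nabla h_0=\nabla(h_0F_0)$, $2h_0\nabla h_0=\nabla h_0^2$, and $2\nabla_{\nabla h_0}\nabla h_0=\nabla|\nabla h_0|^2$ (the last via symmetry of the Hessian), one gets $E=\nabla\bigl(h_0F_0+h_0^2+|\nabla h_0|^2\bigr)$. Expanding $h_0F_0=h_0\Delta h_0+(n-2)h_0^2$ and using $h_0\Delta h_0=\tfrac12\Delta h_0^2-|\nabla h_0|^2$ collapses the scalar inside the gradient to $\tfrac12\Delta h_0^2+(n-1)h_0^2$. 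Hence $E\equiv 0$ is equivalent to $\nabla_{\mathbb{S}^{n-2}}\bigl[\Delta_{\mathbb{S}^{n-2}}h_0^2+2(n-1)h_0^2\bigr]\equiv 0$, which is precisely the hypothesis of Lemma~\ref{LS5.1:orgcntellipsoid}. Since $h_0>0$, that lemma produces an origin-centered ellipsoid $E_0\subset\mathbb{R}^{n-1}$ with $h_0=h_{E_0}$.

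To close case~(2), let $\Gamma^\circ$ support-parameterize $E_0$, so that $\Gamma^-(u,z)=r(z)\Gamma^\circ(u)$, and pick $A\in\mathrm{GL}(n-1)$ with $A(E_0)=\mathbb{S}^{n-2}$. The affine isomorphism $(x,z)\mapsto(Ax,z)$ of $\mathbb{R}^n$ carries $\mathcal T^-$ onto $\bigl\{(r(z)v,z):v\in\mathbb{S}^{n-2},\,z\in I\bigr\}$, a bona fide hypersurface of revolution about the $e_n$-axis. The principal obstacle in this plan is the algebraic collapse that passes from the fourth-order expression $E$ to the scalar $\tfrac12\Delta h_0^2+(n-1)h_0^2$: once one spots that the quantity assembled out of PDE~\eqref{PS5.2:pde1} under the splitting ansatz is exactly the gradient of the function characterizing ellipsoid support functions through Lemma~\ref{LS5.1:orgcntellipsoid}, the dichotomy and the subsequent affine reduction are routine.
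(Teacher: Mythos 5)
Your proposal is correct and follows essentially the same route as the paper: substitute the split form $h=r(z)h_0(u)$ into \eqref{PS5.2:pde1}, factor out $r(z)r'(z)$, identify the residual vector field with $\tfrac12\nabla\bigl[\Delta h_0^2+2(n-1)h_0^2\bigr]$, and invoke Lemma \ref{LS5.1:orgcntellipsoid} before passing to the affine normalization. The only difference is cosmetic: you reach that identification via the product rule and the Hessian-symmetry identity $2\nabla_{\nabla h_0}\nabla h_0=\nabla\lvert\nabla h_0\rvert^2$, whereas the paper verifies the same equality by an explicit orthonormal-frame computation.
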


\begin{proof}
	We show that when $\mathcal{T}$ is a transversely convex tube in \emph{standard position}, and $\mathcal{T}^{-}$ is not a 		cylinder, then there exists a linear isomorphism that fixes the $e_{n}$-axis while making each horizontal cross-section $		\mathcal{O}(z) - c(z)$ of $\mathcal{T}^{-}$ simultaneously spherical. 
	
	According to Lemma \ref{PS5.2:split} (Splitting lemma), the transverse support function $h$ can be written as $h(u, z) = 		r(z) h_{0}(u)$ for each $\lvert z \rvert < 1$ and $u \in \mathbb{S}^{n-2}$. The partial differential equation 				\eqref{PS5.2:pde1} can therefore be written as
	\begin{equation} 
		\begin{aligned}
			0 &= r(z)r'(z)h_{0}(u) \Bigl\{ \nabla \bigl[ \Delta h_{0} \bigr] (u) + (n -2) \nabla h_{0}(u)\Bigr\}\\
			   &\quad +r(z)r'(z) \Bigl\{ \bigl[ \Delta h_{0}(u) \bigr] \nabla h_{0}(u) + (n -2) h_{0}(u) \nabla h_{0}(u) \Bigr					\}\\
			   &\quad + 2 \left\{ r(z)r'(z) h_{0}(u) \nabla h_{0} (u)+ \nabla_{\textstyle \nabla r'(z) h_{0}(u)} \nabla 					   r(z)h_{0}(u)\right\}\\
			   &= r(z)r'(z) \Bigl\{ h_{0}(u) \nabla \bigl[ \Delta h_{0}\bigr](u) + 2(n -1)h_{0}(u) \nabla h_{0}(u)\\
			   & \qquad \phantom{r(z)r'(z) \Bigl\{} + \bigl[ \Delta h_{0}(u) \bigr] \nabla h_{0}(u) + 2 \nabla_{\textstyle 				    \nabla h_{0}(u)} \nabla h_{0}(u) \Bigr\}.
		\end{aligned}
	\end{equation}
	If $\mathcal{T}^{-}$ is not cylindrical, then there exists $\lvert z_{0} \rvert < 1$, $r'(z_{0}) \ne 0$ and since $r(z_{0}) \ne 	0$ we must have
	\begin{equation} \label{ES5.2:pde1spltfnc}
		\begin{aligned}
			0 &= \bigl[ \Delta h_{0}(u) \bigr] \nabla h_{0}(u) + h_{0}(u) \nabla \bigl[ \Delta h_{0} \bigr](u) + 2(n-1) h_{0}			(u) \nabla h_{0}(u) \\
			& \phantom{{}= \bigl[ \Delta h_{0}(u) \bigr] \nabla h_{0}(u) + h_{0}(u) \nabla \bigl[ \Delta h_{0} \bigr](u)} + 			\underbrace{2 \nabla_{\textstyle \nabla h_{0}(u)} \nabla h_{0}(u)}_{\textstyle \crci}
		\end{aligned}
	\end{equation}
	for each $u \in \mathbb{S}^{n-2}$. 
	
	Let's consider the smooth function $\Delta h_{0}^{2} + 2 (n - 1)h_{0}^{2}$ on $\mathbb{S}^{n-2}$ and calculate its 		spherical gradient
	\begin{equation} \label{ES5.2:sphgrad}
		\begin{aligned}
		&\nabla \bigl[ \Delta h_{0}^{2} + 2 (n -1)h_{0}^{2} \bigr](u) = \nabla \bigl[ \Delta h_{0}^{2} \bigr](u) + 2 (n -1) 			\nabla h_{0}^{2}(u)\\
		&= \nabla \bigl[ 2 h_{0} \Delta h_{0} + 2 \lvert \nabla h_{0} \rvert^{2} \bigr](u) + 4(n - 1) h_{0}(u) \nabla h_{0}(u)			\\
		&= 2 \bigl[ \Delta h_{0}(u) \bigr] \nabla h_{0}(u) + 2 h_{0}(u) \nabla \bigl[ \Delta h_{0}\bigr](u) + 4 (n -1)h_{0}(u) 		\nabla h_{0}(u)\\
		& \phantom{{}= 2 \bigl[ \Delta h_{0}(u) \bigr] \nabla h_{0}(u) + 2 h_{0}(u) \nabla \bigl[ \Delta h_{0}\bigr](u)}+ 			\underbrace{2 \nabla \bigl[ \nabla h_{0} \cdot \nabla h_{0} \bigr]}_{\textstyle \crci}. 
		\end{aligned}
	\end{equation}
	The part \crci \ of the equation \eqref{ES5.2:pde1spltfnc} can be expanded using the \hyperlink{ortfrmfld}{orthonormal frame 	field} $\{ E_{1}, \dotsc, E_{n-2} \}$ as follows:
 	\begin{align}
		2 \nabla_{\textstyle \nabla h_{0}(u)} \nabla h_{0}(u) &= 2 \sum_{k=1}^{n-2} (E_{k}h_{0})(u) \nabla_{E_{k}(u)} 			\left[ \sum_{j=1}^{n-2} (E_{j}h_{0}) \, E_{j} \right](u) \notag\\
											     &= 2 \sum_{j, k=1}^{n-2} \Bigl[ (E_{k}h_{0})(u) \  E_{k}			(E_{j}h_{0})(u) \Bigr] E_{j}(u) \notag\\
		\phantom{2 \nabla_{\textstyle \nabla h_{0}(u)} \nabla h_{0}(u)} &= 2 \sum_{j=1}^{n-2} \left[ \sum_{k=1}^{n-2} 			(E_{k}h_{0}) (u) \ E_{k}(E_{j} h_{0})(u) \right] E_{j}(u) \notag\\
											     		      &= 2 \sum_{j=1}^{n-2} \left[ \sum_{k=1}^{n-2} 			(E_{k}h_{0}) (u) \ E_{j}(E_{k} h_{0})(u) \right] E_{j}(u)
	\end{align}
	where the change of the order of differentiation at the last equality is possible because of the observation 					\eqref{ES5.2:orddif}. Again by using the \hyperlink{ortfrmfld}{orthonormal frame field} $\{ E_{1}, \dotsc, E_{n-2}\}$, the 	part \crci \ of the equation \eqref{ES5.2:sphgrad} can expanded as follows:
	\begin{align*}
		&2 \nabla \bigl[ \nabla h_{0} \cdot \nabla h_{0}\bigr](u) = 2 \sum_{j=1}^{n-2} E_{j} \bigl[ \nabla h_{0} \cdot \nabla 												       h_{0} \bigr](u) E_{j}(u)\\
		&= 2 \sum_{j=1}^{n-2} 2 \bigl[ \nabla_{E_{j}(u)} \nabla  h_{0}(u) \cdot \nabla h_{0}(u) \bigr] E_{j}(u) \\
		&= 4 \sum_{j=1}^{n-2} \left\{ \nabla_{E_{j}(u)} \left[ \sum_{k=1}^{n-2} (E_{k}h_{0}) \, E_{k}\right](u) \cdot 			\nabla h_{0}(u) \right\} E_{j}(u)\\	
	\end{align*}
	\begin{align}
		&= 4 \sum_{j, k=1}^{n-2} \Bigl\{ \bigl[E_{j}(E_{k}h_{0})(u) \,  E_{k}(u)+ (E_{k}h_{0})(u) 						\underbrace{\nabla_{E_{j}(u)} E_{k} \, \bigr\rvert_{u}}_{=0} \bigr] \cdot \nabla h_{0}(u) \Bigr\} E_{j}(u) \notag\\
		&= 4 \sum_{j,k=1}^{n-2} E_{j}(E_{k}h_{0})(u) \left\{ E_{k}(u) \cdot \sum_{l=1}^{n-2} (E_{l}h_{0})(u) \, E_{l}			(u) \right\} E_{j}(u) \notag\\
		&= 4 \sum_{j=1}^{n-2} \left[ \sum_{k=1}^{n-2} (E_{k}h_{0})(u) \ E_{j}(E_{k}h_{0})(u) \right] E_{j}(u).
	\end{align}
	Therefore we can conclude that 
	\[
		\nabla \bigl[ \nabla h_{0} \cdot \nabla h_{0} \bigr](u) = 2 \nabla_{\textstyle \nabla h_{0}(u)} \nabla h_{0}(u) 
	\]
	and hence
	\begin{align*}
		2^{-1} \nabla \bigl[ \Delta h_{0}^{2} + 2 (n - 1)h_{0}^{2} \bigr](u) &= \bigl[ \Delta h_{0}(u) \bigr] \nabla h_{0}(u) 		+ h_{0}(u) \nabla \bigl[ \Delta h_{0} \bigr](u)\\
														   &\qquad + 2(n-1) h_{0}(u) \nabla h_{0}(u) + 2 			\nabla_{\textstyle \nabla h_{0}(u)} \nabla h_{0}(u)\\
														   &=0.
	\end{align*}
	for each $u \in \mathbb{S}^{n-2}$. Using Lemma \ref{LS5.1:orgcntellipsoid} we can conclude that the positive 			function $h_{0}$ on $\mathbb{S}^{n-2}$ is the support function of an origin centered ellipsoid. Since the support function 	$h$ satisfies $h(u, z) = r(z) h_{0}(u)$, the item \eqref{LS2.2:spfnpr3} of Lemma \ref{LS2.2:spfnpr} implies that every 		horizontal cross 	section $\mathcal{O}	(z) - c(z)$ of $\mathcal{T}^{-}$ is a homothetic copy of $\mathcal{O}(0) - c(0)$. 		Therefore, there exists an affine isomorphism of $ \mathbb{R}^{n-1}$ mapping $\mathcal{O}(0) - c(0)$ onto the unit 		sphere $\mathbb{S}^{n-2} \subseteq \mathbb{R}^{n-1}$. Extend this affine isomorphism to $\mathbb{R}^{n}$ by fixing 		the last coordinate and hence $\mathcal{T}^{-}$ is affinely isomorphic to a hypersurface of revolution in $\mathbb{R}^{n}	$. 
\end{proof}

\subsection{Straightening the central curve}

In this subsection, we show that the central curve of a transversely convex tube $\mathcal{T}$ with \emph{cop} is affine and therefore $\mathcal{T}$ is affinely isomorphic to its rectification $\mathcal{T}^{-}$.

The following auxiliary lemma is an important component in the proof of Proposition \ref{LS5.3:affcrv} (Axis Lemma). For a short proof see \cite{bS12}.

\begin{lemma} [Linearity criterion]  \label{LS5.3:affcrv}
	A curve $c \colon I \to \mathbb{R}^{n}$ defined on an open interval is affine if and only if it is locally odd, i.e., for each $b 	\in I$ there exists $t(b) > 0$ so that 
	\[
		c(b + t) - c(b) = c(b) - c(b-t)
	\]
	for each $\lvert t \rvert < t(b)$.
\end{lemma}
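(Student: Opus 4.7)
The forward implication is immediate: if $c(t) = \alpha t + \beta$ with $\alpha, \beta \in \mathbb{R}^{n}$, then $c(b+t) + c(b-t) = 2\beta + 2\alpha b = 2c(b)$ for every $b \in I$ and every $t$ with $b \pm t \in I$, so one may take $t(b)$ to be any number that keeps the arguments in $I$. The content of the lemma is therefore the converse.

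For the converse, assume $c$ is locally odd. Since the central curve of a transversely convex tube is smooth, we may work in the $C^{2}$ category; the plan is to distill from local oddness the pointwise vanishing of $c''$ and then integrate. Fix $b \in I$ and define the auxiliary vector-valued function
\[
	F(t) := c(b+t) + c(b-t) - 2 c(b)
\]
on a symmetric neighborhood of $0$ in which both $b+t$ and $b-t$ remain in $I$. Local oddness at $b$ says $F \equiv 0$ on $(-t(b), t(b))$, so every derivative of $F$ at $0$ vanishes. Direct differentiation yields $F'(t) = c'(b+t) - c'(b-t)$ and $F''(t) = c''(b+t) + c''(b-t)$, and evaluating at $t=0$ gives
\[
	0 = F''(0) = 2\, c''(b).
\]
Since $b \in I$ was arbitrary, $c''$ vanishes identically on $I$, and integrating twice shows that $c$ is affine.

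This is essentially the whole proof in the smooth category, so there is no real obstacle. The only point at which one might want to be careful is the implicit assumption of $C^{2}$ regularity: if one wished to weaken the hypothesis to continuity, one would instead subtract the affine interpolant between two chosen endpoints $a, b \in I$, reduce to showing that a continuous locally odd curve vanishing at $a$ and $b$ vanishes on $[a,b]$, and then propagate the identity by covering $[a,b]$ with finitely many overlapping symmetry neighborhoods and using the density of the orbit of dyadic midpoint reflections. In the present setting, however, smoothness is automatic, so the two-line Taylor argument above suffices.
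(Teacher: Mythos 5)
Your proof of the forward direction and your $C^{2}$ argument for the converse are both correct: local oddness at $b$ makes $F(t)=c(b+t)+c(b-t)-2c(b)$ vanish identically near $0$, so $F''(0)=2c''(b)=0$, and since $b$ was arbitrary $c''\equiv 0$. This is a legitimate and complete proof for smooth curves, and in the paper the lemma is only ever applied to the central curve of a transversely convex tube (and to its component $c^{\perp}$), which is smooth by definition, so the argument suffices for the intended use.

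The paper itself does not prove the lemma; it defers to the short proof in \cite{bS12}, which is designed to work for merely continuous curves. Your route is more elementary but less general: the Taylor-expansion argument genuinely needs two derivatives, whereas Solomon's argument only needs continuity. You are right to flag this as the one delicate point, and your sketch of how to handle the continuous case (subtract the affine interpolant and propagate the vanishing by iterated midpoint reflections) is the standard repair. One further remark worth making explicit: as literally stated, with no regularity hypothesis at all on $c$, the lemma is false --- a discontinuous additive function $f\colon\mathbb{R}\to\mathbb{R}$ built from a Hamel basis satisfies $f(b+t)+f(b-t)=2f(b)$ for all $b$ and $t$, hence is locally (indeed globally) odd, but is not affine. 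So some regularity assumption (continuity at least) is not optional but necessary, and your observation that smoothness is available in context is exactly what closes the gap.
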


\begin{proposition} [Axis lemma] \label{PS5.3:axslemm}
	Suppose $\mathcal{T}$ is a transversely convex tube with cop. Then its central curve is affine, making $\mathcal{T}$ 		affinely isomorphic to its rectification $\mathcal{T}^{-}$. 
\end{proposition}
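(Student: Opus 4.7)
By Proposition \ref{PS5.2:split} (Splitting lemma), after \emph{standard position} is fixed the support map of $\mathcal{T}$ decomposes as $\Gamma(u,z)=c(z)+r(z)\Gamma^{\circ}(u)$, so that the rectification $\mathcal{T}^{-}$ differs from $\mathcal{T}$ only by the horizontal translation $z\mapsto c(z)$. The statement therefore reduces to proving that the central curve $c\colon I\to\mathbb{R}^{n-1}$ is affine: once this is known, the map $(p,z)\mapsto(p-c(z),z)$ is the restriction of an affine isomorphism of $\mathbb{R}^{n}$ that carries $\mathcal{T}$ onto $\mathcal{T}^{-}$.

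To prove $c$ affine I plan to invoke the Linearity Criterion (Lemma \ref{LS5.3:affcrv}): it suffices to show that for each $b\in I$ there is $t(b)>0$ with $c(b+t)+c(b-t)=2c(b)$ for every $|t|<t(b)$. The strategy is to extract this local oddness from the central symmetry of tilted cross-sections. Fix $b$ and $\tau\in\mathbb{S}^{n-2}$; for $|\epsilon|$ small, the cross-section $\overline{\mathcal{O}}_{\tau}(b,\epsilon)=\mathcal{T}\cap P_{\tau,b}(\epsilon)$ is an ovaloid, central by \emph{cop}, with center $(C_{\epsilon},Z_{\epsilon})\in P_{\tau,b}(\epsilon)$. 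Its central involution pulls back via the support reparameterization $\theta_{\epsilon}$ of Proposition \ref{PS5.2:sptreparam} to a smooth fixed-point-free involution $\phi_{\epsilon}\colon\mathbb{S}^{n-2}\to\mathbb{S}^{n-2}$ with $\phi_{0}=-\mathrm{id}$, and centrality records itself as the identity
\[
\Gamma(\theta_{\epsilon}(u),z(\epsilon,\theta_{\epsilon}(u)))+\Gamma(\theta_{\epsilon}(\phi_{\epsilon}(u)),z(\epsilon,\theta_{\epsilon}(\phi_{\epsilon}(u))))=2C_{\epsilon}
\]
together with its height analogue, for every $u\in\mathbb{S}^{n-2}$. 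Substituting the split form of $\Gamma$ and Taylor-expanding in $\epsilon$ produces a hierarchy of constraints on $(c,r,\Gamma^{\circ})$; the order-$\epsilon^{1}$ constraint already drove the Splitting Lemma and Proposition \ref{PS5.2:classofrcttube} and carries no information about $c$ once the splitting has been installed (the $c$-dependent summands in \eqref{ES5.2:dervctx} become automatically constant in $u$). Information about $c$ first appears at order $\epsilon^{2}$, and projecting that obstruction against test vectors in $\mathbb{R}^{n-1}$ while varying $\tau\in\mathbb{S}^{n-2}$ should yield the pointwise identity $c(b+t)+c(b-t)=2c(b)$.

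The principal obstacle is that in dimension $n\ge 4$ the tilt direction $\tau$ has codimension $\ge 2$ in $\mathbb{R}^{n-1}$: a single $\tau$ constrains only the component of $c(b+t)+c(b-t)-2c(b)$ transverse to $\tau$, whereas the two-dimensional base case of \cite{bS12} (where $\tau$ is one-dimensional) requires no such combination. To isolate the full vector identity one must combine several $\tau$'s — in particular choices both orthogonal and parallel to $c'(b)$ — while tracking the perturbations of the involution $\phi_{\epsilon}$ and of the center $(C_{\epsilon},Z_{\epsilon})$ to sufficient order in $\epsilon$ that the affineness relation on $c$ can be cleanly isolated. Once $c$ is shown affine, the affine isomorphism between $\mathcal{T}$ and $\mathcal{T}^{-}$ follows as in the first paragraph.
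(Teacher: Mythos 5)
Your reduction is sound: once $c$ is shown to be affine, $(p,z)\mapsto(p-c(z),z)$ is an affine isomorphism carrying $\mathcal{T}$ onto $\mathcal{T}^{-}$, and the Linearity Criterion (Lemma \ref{LS5.3:affcrv}) is indeed the right tool, exactly as in the paper. But the core of the argument --- actually producing the local oddness $c(b+t)+c(b-t)=2c(b)$ --- is left as a plan whose decisive steps you yourself flag as unresolved, and there are two concrete problems with the plan. First, a Taylor coefficient of the symmetry obstruction at $\epsilon=0$ is a \emph{differential} identity at the single height $b$ (involving $c'(b)$, $c''(b)$, $r'(b)$, etc.); it cannot by itself deliver the finite-$t$ identity you need. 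The most you could hope to extract is $c''(b)=0$ for all $b$, and you neither derive that nor compute the second-order data it would require: the $\epsilon^{2}$-expansion needs $\partial_{\epsilon}^{2}\theta_{\epsilon}|_{\epsilon=0}$ and the second-order behavior of the height function and of the center $(C_{\epsilon},Z_{\epsilon})$, none of which is available (Proposition \ref{PS5.2:sptreparam} only supplies first-order information, which is precisely why the first-order obstruction is blind to $c$ after splitting, as you correctly note). Second, your acknowledged ``principal obstacle'' --- that one tilt direction $\tau$ only sees the component of the defect transverse to $\tau$ --- is not a technicality to be absorbed later; resolving it is the actual content of the lemma in dimension $n\geq 4$.

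The paper takes a different and entirely finite route that you should compare with. It first invokes Proposition \ref{PS5.2:classofrcttube} to reduce to two concrete cases. In the cylindrical case, the tangent hyperplanes of $\mathcal{T}$ at the two endpoints of any $u$-diameter of $\mathcal{O}(b)$ are parallel; tilting the horizontal hyperplane about that diameter keeps both endpoints, with parallel tangent planes, on the tilted central ovaloid, so the diameter remains a diameter and the center stays at $(c(b),b)$ for every tilt --- whence a whole neighborhood of $\mathcal{O}(b)$ is symmetric through $(c(b),b)$ and $c$ is locally odd for \emph{finite} $t$. In the horizontally spherical case, the tilted cross-section is written out exactly in adapted coordinates; the exact central-symmetry identity \eqref{DS5.3:cntsymm}, evaluated at two distinct points $\xi,\xi'$ of the $\mathbb{S}^{n-3}$ horizontal slice (nondegenerate precisely because $n\geq4$) and subtracted, yields \eqref{ES5.3:zerodtprd} and hence $c^{\perp}(\beta)=\mu_{\beta}^{+}c^{\perp}(t)$ --- local oddness of the components of $c$ orthogonal to $\tau$ --- and a second tilt direction recovers the remaining component. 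That two-tilt, exact-identity argument is exactly what fills the gap you identified but did not close.
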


\begin{proof}
	Without loss of generality we can assume that $\mathcal{T}$ lies in \emph{standard position} because a general transversely convex 	tube, by definition, is affinely isomorphic to the one in \emph{standard position} and the central curve is preserved under affine 		isomorphisms. Using Proposition \ref{PS5.2:classofrcttube} and the invariance of the central curve under affine 				isomorphisms we can further assume that $\mathcal{T}^{-}$ is either a cylinder or a hypersurface of revolution with 			$\mathbb{R}e_{n}$ as its axis and show that the \emph{cop} assumption forces the central curve of $\mathcal{T}$ to be a 	straight line. 
	
	\textbf{Cylindrical case:} Assume that $\mathcal{T}^{-}$ is a cylinder over a central ovaloid $\mathcal{O}(0)$ at height 		$0$. Each horizontal cross-section $\mathcal{O}(z)$ at height $z \in (-1, 1)$ translates to $\mathcal{O}(0)$. Let $\Gamma$ 	be the support parameterization of $\mathcal{O}(0)$, then a parameterization of $\mathcal{T}$ can be obtained as
	\begin{align*}
		&X \colon \mathbb{S}^{n-2} \times (-1, 1) \to \mathcal{T}\\
		&X(u, z) = \bigl( c(z) + \Gamma(u), z \bigr).
	\end{align*}
	Fix $b \in (-1, 1)$, $u \in \mathbb{S}^{n-2}$, consider the $u$-diameter of $\mathcal{O}(b)$, which is the line segment 		joining the unique two points at which $\mathcal{O}(b)$ has normal line $\mathbb{R}u$. Since $\Gamma$ support 			parameterizes $\mathcal{O}(0)$, the endpoints of the $u$-diamater are $X(u, b)$ and $X(-u, b)$. The tangent hyperplanes 		of $\mathcal{T}$ at $X(u, b)$ and $X(-u, b)$ are
	\begin{align*}	
		\tng_{X(u, b)} \mathcal{T} &= \mathrm{span} \Bigl\{ \partial_{1}X(u, b) \bigl[ u^{\perp} \bigr], \, \partial_{2}X(u, 			b)(1) \Bigr\}\\
		&= u^{\perp} \oplus \mathbb{R}(c'(b), 1)\\
		&= (-u)^{\perp} \oplus \mathbb{R}(c'(b), 1)\\
		&= \mathrm{span} \Bigl\{ \partial_{1}X(-u, b) \bigl[ (-u)^{\perp} \bigr], \, \partial_{2}X(-u, b)(1) \Bigr\}\\
		&= \tng_{X(-u, b)} \mathcal{T}.
	\end{align*}
	
	Fix the $u$-diameter of $\mathcal{O}(b)$ as axis and tilt the horizontal hyperplane $H_{e_{n}, b}$ in a direction 			perpendicular to the axis with some small slope $\epsilon > 0$ to get a new hyperplane $P_{\epsilon}(u)$. For sufficiently 		smal $\epsilon > 0$, the intersection $\mathcal{O}(b, u, \epsilon) := \mathcal{T} \cap P_{\epsilon}(u)$ will remain a 			central ovaloid because $\mathcal{T}$ has \emph{cop}.
	
	Since $P_{\epsilon}(u)$ contains the $u$-diameter of $\mathcal{O}(b)$, the endpoints $X(u, b)$ and $X(-u, b)$ of that 		diameter remain on $\mathcal{O}(b, \epsilon, u)$ independently on $\epsilon > 0$. Since the tangent hyperplanes to $		\mathcal{T}$ at these points are parallel and their intersections with $P_{\epsilon}(u)$ form codimension one planes 			tangent to $\mathcal{O}(b, u, \epsilon)$ at $X(u, b)$ and $X(-u, b)$, those planes are also parallel.
	
	We can conclude that the $u$-diameter of $\mathcal{O}(b)$ remains a diameter of $\mathcal{O}(b, u, \epsilon)$ 			independently on $\epsilon > 0$, and hence $(c(b), b)$ remains the center of symmetry of $\mathcal{O}(b, u, \epsilon)$ for 	each $u \in \mathbb{S}^{n-2}$ and for all sufficiently small $\epsilon > 0$. The center of $\mathcal{O}(b, u, \epsilon)$ 		remains fixed as we vary $\epsilon > 0$.
	
	Every point sufficiently close to $\mathcal{O}(b)$ on $\mathcal{T}$ belongs to $\mathcal{O}(b, u, \epsilon)$ for some $u 	\in \mathbb{S}^{n-2}$ and small enough $\epsilon > 0$, so that by \emph{cop}, its reflection through $(c(b), b)$ also lies 		on $\mathcal{T}$. It follows that an entire neighborhood $U$ of $\mathcal{O}(b)$ in $\mathcal{T}$ has reflection 			symmetry through $(c(b), b)$. In this neighborhood, pick two points $p$, $q$, which are the endpoints of a diameter of 		horizontal cross-section and denote the center of symmetry $(c(b), b)$ as $r$. 
	
	We can define the reflection through $r$ as
	\begin{align*}
		A_{r} &\colon U \to U\\
		x	  &\mapsto 2r - x
	\end{align*}
	which maps $p$ and $q$ respectively to $p'$ and $q'$, which belong to a horizontal cross-section in the neighborhood $U$. 	Since the differentials satisfy $\derv A_{r}(p) = - \mathrm{id}_{\tng_{p} \mathcal{T}}$ and $\derv A_{r}(q) = - 			\mathrm{id}_{\tng_{q} \mathcal{T}}$ and $\tng_{p} \mathcal{T} = \tng_{q} \mathcal{T}$ we can conclude that $			\tng_{p'} \mathcal{T} = \tng_{q'} \mathcal{T}$. This implies that the segment $[p', q']$ is a diameter and the midpoint of $	[p, q]$ is mapped to the midpoint of $[p', q']$. Since this holds in the neighborhood $U$ of $\mathcal{O}(b)$ we can 			choose $t(b) > 0$ so that
	\begin{align*}
		c(b + t)  - c(b) &= - \bigl[ c(b - t) - c(b) \bigr]\\
				     &= c(b) - c(b - t)
	\end{align*}	
	for each $0 \leq t < t(b)$. Since $b \in (-1, 1)$ is arbitrary then by using Lemma \ref{LS5.3:affcrv} (Linearity criterion) 		we can conclude that $c$ is affine.
		
	\textbf{Horizontally spherical case:} Each horizontal plane $H_{e_{n}, b}$ cuts the tube $\mathcal{T}$ in an $(n-2)$-		sphere centered at $(c(b), b)$, $b \in (-1, 1)$. Let $F(b)$ denote the square of the radius of this sphere and $c(b) = (c_{1}(b), 	\dotsc, c_{n-1}(b))$ for each $b \in (-1, 1)$.
	
	The tube $\mathcal{T}$ has a parameterization $X(u, z) = \bigl( c(z) + \Gamma(u, z), z \bigr)$ and for any fixed $u \in 		\mathbb{S}^{n-2}$ the function $F(z) = \lvert \Gamma(u, z) \rvert^{2}$ is smooth. Given the function $F$, the tube 			$\mathcal{T}$ can be described as
	\[
		\mathcal{T} = \left\{ (x, z) \in \mathbb{R}^{n-1} \times \mathbb{R} \biggm| x= (x_{1}, \dotsc, x_{n-1}), \, \lvert z 			\rvert < 1, \, \sum_{i=1}^{n-1} \lvert x_{i} - c_{i}(z) \rvert^{2} = F(z) \right\}.
	\]
	
	Let $\beta \in \mathbb{R}$ and $f$ be any function defined in a neighborhood of $\beta$. The $\beta$-translate of $f$ is 		defined as $\mu_{\beta}f(t) = f(\beta + t)$ and 
	
	\begin{alignat*}{2}
		\mu_{\beta}^{+}f(t) &= \frac{\mu_{\beta}f(t) + \mu_{\beta}f(-t)}{2} & \qquad &\text{is the even part of $				\mu_{\beta}f$,}\\
		\mu_{\beta}^{-}f(t) &= \frac{\mu_{\beta}f(t) - \mu_{\beta}f(-t)}{2}  & 	            &\text{is the odd part of $				\mu_{\beta} f$.}		
	\end{alignat*}
	Fix $\lvert \beta \rvert < 1$, $\tau \in \mathbb{S}^{n-2} \times \{0\} \subseteq \mathbb{R}^{n-1} \times \{0\} \subseteq 		\mathbb{R}^{n}$, let $ \tau^{\perp}$ denote the orthogonal complemet of $\tau$ in $\mathbb{R}^{n-1} \times \{0\}$ with 	basis $\{e_{1}, \ldots, e_{n-2}\}$, and $\bar{e}_{n} = (0, \ldots, 0, 1)$.	
	Since $\mathcal{T}$ has \emph{cop}, and lies in \emph{standard position} we can find a small slope $m > 0$, and an $\bar{e}_{n}	$-intercept $b = b(\beta)$ such that the plane $P$ given in the new coordinates by
	\[
		P = \left\{ x \tau + \sum_{i=1}^{n-2} y_{i}e_{i} + (mx + b) \bar{e}_{n} \biggm| x, y_{1}, \dotsc y_{n-2} \in 				\mathbb{R} \right\}
	\]
	intersects $\mathcal{T}$ in a central ovaloid $\mathcal{O}$ with center having $\bar{e}_{n}$ coordinate $\beta$. From 		now on we will identify the curve $c(I)$, the plane $P$, the tube $\mathcal{T}$, and the ovaloid $\mathcal{O}$ with their 		coordinate representations under the basis $\{ \tau, e_{1}, \dotsc, e_{n-2}, \bar{e}_{n}\}$ because such an identification 		constitutes an orthogonal isomorphism. Namely, we make the following identifications:
	
	For the curve $c \colon I \to \mathbb{R}^{n-1}$ given by 
	\[
		c(z) = \bar{c}_{1}(z) \tau + \sum_{i=1}^{n-2} \bar{c}_{i+1}(z) e_{i}
	\]
	we let $c(z) = (\bar{c}_{1}(z), c^{\perp}(z))$, where $c^{\perp}(z) = (\bar{c}_{2}(z), \dotsc, \bar{c}_{n-1}(z))$. The 		plane $P$ becomes
	\[
		P = \left\{ \left( \frac{z - b}{m}, y_{1}, \dotsc, y_{n-2}, z \right) \biggm| y_{1}, \dotsc, y_{n-2}, z \in \mathbb{R}			\right\}
	\]
	the tube is replaced with
	\[
		\mathcal{T} = \left\{ (x, y_{1}, \dotsc, y_{n-2}, z) \colon \lvert z \rvert < 1, \bigl[ x - \bar{c}_{1}(z) \bigr]^{2} + 			\sum_{i=1}^{n-2} \bigl[ y_{i} - \bar{c}_{i+1}(z) \bigr]^{2} = F(z) \right\}
	\]
	and lastly the ovaloid $\mathcal{O} = \mathcal{T} \cap P$ is replaced with 
	
	\begin{align}
		\mathcal{O} &= \left\{ (y, z) \in \mathbb{R}^{n-2} \times \mathbb{R} \biggm| \lvert z \rvert < 1,\, \left[ \frac{z-b}			{m} - \bar{c}_{1}(z) \right]^{2} + \sum_{i=1}^{n-2} \bigl[ y_{i} - \bar{c}_{i+1}(z) \bigr]^{2} = F(z) \right\} 				\label{ES5.3:ovaloidyz-crd}\\
				    &= \Bigl\{ (y, z) \in \mathbb{R}^{n-2} \times \mathbb{R} \colon \lvert z \rvert < 1, \, \lvert y - 				c^{\perp}(z) \rvert^{2} = \lambda(z) \Bigr\} \notag
	\end{align}
	where $\lambda(z)$ is defined as
	\[
		\lambda(z) = F(z) - \left[ \frac{z-b}{m} - \bar{c}_{1}(z) \right]^{2}.
	\]
	Therefore, the ovaloid $\mathcal{O}$ can be written as
	\begin{multline*}
		\mathcal{O} = \Bigl\{ (\bar{y}, y_{n-2}, z) \in \mathbb{R}^{n-3} \times \mathbb{R} \times \mathbb{R} \colon 			\lvert z \rvert < 1, \, \lvert \bar{y} - \bigl( \bar{c}_{2}(z), \dotsc, \bar{c}_{n-2}(z) \bigr) \rvert^{2} \leq \lambda(z)\\
		 y_{n-2} = \bar{c}_{n-1}(z) \pm \sqrt{\lambda(z) - \lvert \bar{y} - (\bar{c}_{2}(z), \dotsc, \bar{c}_{n-2}(z)) 			\rvert^{2}} \Bigr\}.
	\end{multline*}	
	
	\begin{figure}[h]
		\centering
		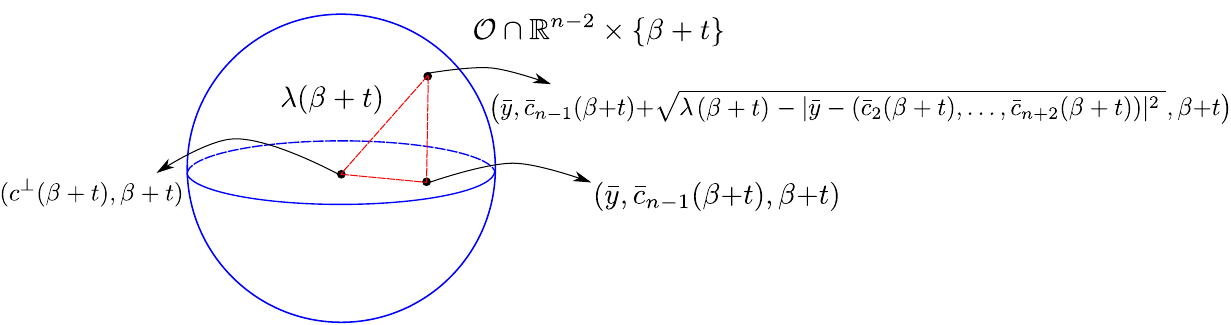
		\caption{Cross-section of the ovaloid $\mathcal{O}$}
	\end{figure}
		
	Let $L = \max \{z - \beta \colon (y, z) \in \mathcal{O}\}$, $\lvert t \rvert < L$, $t = z - \beta$, $\bar{\beta} := \beta - b$, $z 		= \bar{\beta} + b + t$, then the cross-section $\mathcal{O} \cap \mathbb{R}^{n-2} \times \{\beta + t\}$ of the ovaloid is an 	$(n-3)$-sphere centered at $(c^{\perp}(\beta + t), \beta + t)$ with radius $\lambda(\beta + t)$. When $t = 0$ and $z = \beta$, 	choose two antipodal points 
	\begin{align*}
		l^{+} &= \bigl( c^{\perp}(\beta) + (0, \lambda(\beta) \bigr), \beta) \in \mathcal{O} \cap (\mathbb{R}^{n-2} \times 			\{ \beta \})\\
		l^{-}  &= \bigl( c^{\perp}(\beta) - (0, \lambda(\beta) \bigr), \beta) \in \mathcal{O} \cap (\mathbb{R}^{n-2} \times 			\{ \beta \})
	\end{align*}
	
	\begin{figure}[h]
		\centering
		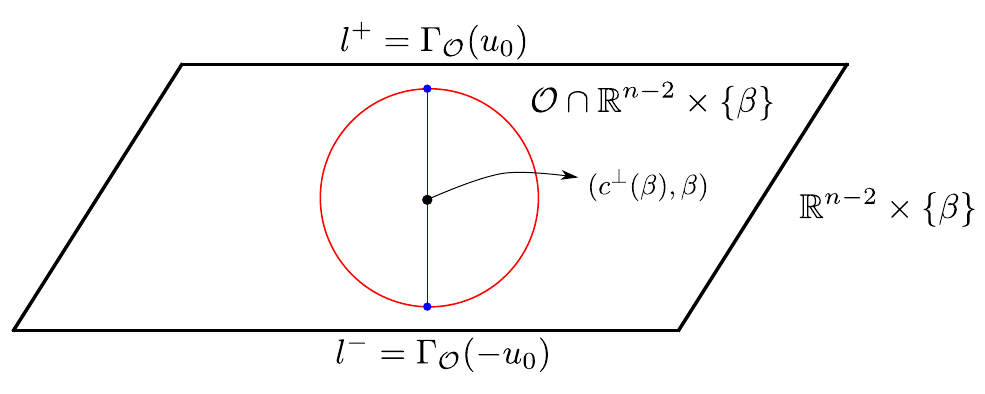
		\caption{Cross-section $\mathcal{O} \cap \mathbb{R}^{n-2} \times \{ \beta \}$}
	\end{figure}
	
	Pick the point $l^{+}$ and let $\Gamma_{\mathcal{O}}$ be the support parameterization of $\mathcal{O}$, then there 		exists a unique $u_{0} \in \mathbb{S}^{n-2}$ with $\Gamma_{\mathcal{O}}(u_{0}) = l^{+}$. Since $\mathcal{O}$ has 		the center of symmetry whose $\bar{e}_{n}$-coordinate is $\beta$, $\Gamma_{\mathcal{O}}(- u_{0})$ must also lie on		 $\mathcal{O} \cap \mathbb{R}^{n-2} \times \{\beta\}$. The ovaloid $\mathcal{O}$ has the same tangent hyperplane 		$u_{0}^{\perp} = (-u_{0})^{\perp}$ at the two endpoints of the diameter connecting $l^{+}$ to $\Gamma_{\mathcal{O}}		(-u_{0})$. This hyperplane, when intersected with $\mathbb{R}^{n-2} \times \{\beta\}$, becomes the tangent plane to $		\mathcal{O} \cap \mathbb{R}^{n-2} \times \{\beta\}$ at $l^{+}$ and $\Gamma_{\mathcal{O}}(-u_{0})$. Therefore, we 		can conclude that $l^{-} = \Gamma_{\mathcal{O}}(-u_{0})$, and the chord joining $l^{+}$ and $l^{-}$ passes through the 	center of $\mathcal{O}$ which has coordinates $(c^{\perp}(\beta), \beta)$ in the $(y, z)$ coordinate system.
	
	Using this fact we can express the central symmetry of $\mathcal{O}$ as follows: for every $y \in \mathbb{R}^{n-2}$ and 	$t \in \mathbb{R}$
	\begin{equation} \label{DS5.3:cntsymm}
		(c^{\perp}(\beta) + y, \beta + t) \in \mathcal{O} \text{ if and only if } (c^{\perp}(\beta) - y, \beta - t) \in \mathcal{O}.
	\end{equation}
	
	By using the expression of the ovaloid $\mathcal{O}$ in the $(y, z)$ coordinate system \eqref{ES5.3:ovaloidyz-crd} we get
	\begin{equation}\label{ES5.3:trnslfnc+}	
		\begin{aligned}
			& (c^{\perp}(\beta) + y, \beta + t) \in \mathcal{O} \iff  \mu_{\beta}^{+}F(t) + \mu_{\beta}^{-}F(t) = 					\mu_{\beta}F(t) = F(\beta + t) \\
			&= \left[ \frac{\bar{\beta} + t}{m} - \bar{c}_{1}(\beta + t) \right]^{2} + \bigl\lvert (c^{\perp}(\beta) + y) - 				c^{\perp}(\beta + t) \bigr\rvert^{2} \\
			&= \left[ \frac{\bar{\beta} + t}{m} - \mu_{\beta}^{+}\bar{c}_{1}(t) - \mu_{\beta}^{-}\bar{c}_{1}(t) 					\right]^{2} + \bigl\lvert c^{\perp}(\beta) + y - \mu_{\beta}^{+}c^{\perp}(t) - \mu_{\beta}^{-}c^{\perp}(t) \bigr				\rvert^{2}. 			
		\end{aligned}
	\end{equation}
	\begin{equation} \label{ES5.3:trnslfnc-}	
		\begin{aligned}
			&(c^{\perp}(\beta) - y, \beta - t) \in \mathcal{O} \iff \mu_{\beta}^{+}F(t) - \mu_{\beta}^{-}F(t) = \mu_{\beta}				F(-t) = F(\beta - t) \\
			&=\left[ \frac{\bar{\beta} - t}{m} - \bar{c}_{1}(\beta - t) \right]^{2} + \bigl\lvert (c^{\perp}(\beta) - y) - 					c^{\perp} (\beta - t) \bigr\rvert^{2}\\	
			&= \left[ \frac{\bar{\beta} - t}{m} - \mu_{\beta}^{+}\bar{c}_{1}(t) + \mu_{\beta}^{-}\bar{c}_{1}(t) 					\right]^{2} + \bigl\lvert c^{\perp}(\beta) - y - \mu_{\beta}^{+}c^{\perp}(t) + \mu_{\beta}^{-}c^{\perp}(t) \bigr				\rvert^{2}. 		
		\end{aligned}
	\end{equation}
	When we subtract the equation \eqref{ES5.3:trnslfnc-} from the equation \eqref{ES5.3:trnslfnc+} we get
	\begin{equation} \label{ES5.3:difftrnslfnc}
		\begin{aligned}
			&2 \mu_{\beta}^{-}F(t) = F(\beta + t) - F(\beta - t)\\
			&= \left[ \left( \frac{\bar{\beta}}{m} - \mu_{\beta}^{+}\bar{c}_{1}(t) \right) + \left( \frac{t}{m} - \mu_{\beta}			^{-}\bar{c}_{1}(t) \right) \right]^{2}\!+\! \bigl\lvert (c^{\perp}(\beta) - \mu_{\beta}^{+}c^{\perp}(t)) + (y - 				\mu_{\beta}^{-} c^{\perp}(t)) \bigr\rvert^{2}\\
			& - \left[ \left( \frac{\bar{\beta}}{m} - \mu_{\beta}^{+}\bar{c}_{1}(t) \right) - \left( \frac{t}{m} - \mu_{\beta}				^{-} \bar{c}_{1}(t) \right) \right]^{2}\! - \! \bigl\lvert (c^{\perp}(\beta) - \mu_{\beta}^{+}c^{\perp}(t)) - (y - 				\mu_{\beta}^{-}c^{\perp}(t)) \bigr\rvert^{2}\\
			&=4 \left( \frac{\bar{\beta}}{m} - \mu_{\beta}^{+}\bar{c}_{1}(t) \right)\left( \frac{t}{m} - \mu_{\beta}^{-} 				\bar{c}_{1}(t) \right) + 4 \bigl(c^{\perp}(\beta) - \mu_{\beta}^{+}c^{\perp}(t)\bigr) \cdot \bigl(y - \mu_{\beta}				^{-}c^{\perp}(t)\bigr)
		\end{aligned}
	\end{equation}
	Recall that 
	\begin{align*}
		\mathcal{O} \cap \mathbb{R}^{n-2} \times \{\beta + t\} &= \Bigl\{ (y, \beta + t) \in \mathbb{R}^{n-2} \times 				\mathbb{R} \colon \lvert y - c^{\perp}(\beta + t) \rvert^{2} = \lambda (\beta + t) \Bigr\}\\
		\intertext{where}
		\lambda(\beta + t) &= F(\beta + t) - \left[ \frac{\bar{\beta} + t}{m} - \bar{c}_{1}(\beta + t) \right]^{2} > 0
	\end{align*}
	for sufficiently small $\lvert t \rvert \geq 0$ because the ovaloid $\mathcal{O}$ can not contain its center of symmetry and 		hence $\lambda (\beta) > 0$. So we can conclude that $\mathcal{O} \cap \mathbb{R}^{n-2} \times \{\beta + t\}$ is a 			nondegenerate sphere
	\[
		\mathcal{O} \cap \mathbb{R}^{n-2} \times \{\beta + t\} = \bigl[ c^{\perp}(\beta + t) + \lambda(\beta + t) \mathbb{S}			^{n-3} \bigr] \times \{\beta + t\}
	\]
	for small enough $\lvert t \rvert \geq 0$.
	
	Pick any two distinct points $\bar{y}$, $\bar{y}' \in \mathcal{O} \cap \mathbb{R}^{n-2} \times \{\beta + t\}$,
	\begin{equation} \label{ES5.3:dstpnts}
		\begin{aligned}
			\bar{y}  = (y, \beta + t) &= (c^{\perp}(\beta + t) + \xi, \beta + t) \in \mathcal{O}\\
							  &= \bigl( c^{\perp}(\beta) + \bigl[ c^{\perp}(\beta + t) - c^{\perp}(\beta) + \xi \bigr], 								        \beta + t \bigr)\\
			\bar{y}' = (y', \beta + t) &= (c^{\perp}(\beta + t) + \xi', \beta + t) \in \mathcal{O}\\
							   &= \bigl( c^{\perp}(\beta) + \bigl[ c^{\perp}(\beta + t) - c^{\perp}(\beta) + \xi' \bigr], 								        \beta + t \bigr)
		\end{aligned}
	\end{equation}
	for two distinct points $\xi$, $\xi' \in \lambda(\beta + t) \mathbb{S}^{n-3}$. The central symmetry condition 				\eqref{DS5.3:cntsymm} of the ovaloid $\mathcal{O}$ and the equations \eqref{ES5.3:dstpnts}, \eqref{ES5.3:difftrnslfnc} 		allow us to write the following two equalities				
	\begin{equation} \label{ES5.3:eqybar}
		\begin{aligned}
			\mu_{\beta}^{-}F(t) &= 2 \left( \frac{\bar{\beta}}{m} - \mu_{\beta}^{+}\bar{c}_{1}(t) \right) \! \left( \frac{t}								  {m} - \mu_{\beta}^{-} \bar{c}_{1}(t) \right) \\
						      &\ \quad + 2 \bigl(c^{\perp}(\beta) - \mu_{\beta}^{+}c^{\perp}(t)\bigr) \cdot 								\Bigl( \bigl[ c^{\perp}(\beta + t) - c^{\perp}(\beta) + \xi \bigr] - \mu_{\beta}^{-}c^{\perp}(t)\Bigr)
		\end{aligned}
	\end{equation}
	using the coordinates of $\bar{y}$ and 
	\begin{equation} \label{ES5.3:eqybarprm}
		\begin{aligned}
			\mu_{\beta}^{-}F(t) &=2 \left( \frac{\bar{\beta}}{m} - \mu_{\beta}^{+}\bar{c}_{1}(t) \right) \left( \frac{t}								{m} - \mu_{\beta}^{-} \bar{c}_{1}(t) \right) \\
						       & \ \quad + 2 \bigl(c^{\perp}(\beta) - \mu_{\beta}^{+}c^{\perp}(t)\bigr) \cdot 								    \Bigl(\bigl[ c^{\perp}(\beta + t) - c^{\perp}(\beta) + \xi' \bigr] - \mu_{\beta}^{-}c^{\perp}(t)\Bigr)
	 	\end{aligned}
	\end{equation}
	using the coordinates of $\bar{y}'$. Subtracting the equation \eqref{ES5.3:eqybar} from the equation					\eqref{ES5.3:eqybarprm} we get 
	\begin{equation} \label{ES5.3:zerodtprd}
		0 = \bigl( c^{\perp}(\beta) - \mu_{\beta}^{+}c^{\perp}(t) \bigr) \cdot (\xi' - \xi).
	\end{equation}
	Since the equation \eqref{ES5.3:zerodtprd} holds for every $\xi$, $\xi' \in \lambda(\beta + t) \mathbb{S}^{n-3}$ we can 		conclude that 
	\begin{align*}
		c^{\perp}(\beta) &= \mu_{\beta}^{+}c^{\perp}(t) = \frac{\mu_{\beta}c^{\perp}(t) + \mu_{\beta}c^{\perp}(-t)}{2}\\
		\intertext{for small enough $\lvert t \rvert$ depending on $\beta \in (-1, 1)$.}
		\Rightarrow 2c^{\perp}(\beta) &= c^{\perp}(\beta + t) + c^{\perp}(\beta - t)\\
		\Rightarrow c^{\perp}(\beta + t) - c^{\perp}(\beta) &= - \bigl[ c^{\perp}(\beta - t) - c^{\perp}(\beta) \bigr]
	\end{align*}
	holds for small enough $\lvert t \rvert$ depending on $\beta \in (-1, 1)$ and hence using Lemma \ref{LS5.3:affcrv} 			(Linearity criterion) we can conclude that $c^{\perp}$ is affine.
	
	The curve $c \colon I \to \mathbb{R}^{n-1}$ has coordinates $c(z) = \bigl( \bar{c}_{1}(z), c^{\perp}(z) \bigr)$ with 			respect the the orthonormal basis $\{ \tau, e_{1}, \dotsc, e_{n-2}\}$ and since $c^{\perp}$ is affine it must be of the form
	\[
		c^{\perp}(z) = (a_{1} + b_{1}z, \dotsc, a_{n-2} + b_{n-2}z).
	\]
	Therefore, the curve $c$ must be of the form
	\begin{align*}
		c(z) &= (\bar{c}_{1}(z), a_{1} + b_{1}z, \dotsc, a_{n-2} + b_{n-2}z)\\
		\intertext{with respect to the basis $\{ \tau, e_{1}, \dotsc, e_{n-2}\}$, and}
		       &= (a_{1} + b_{1}z, \bar{c}_{1}(z), a_{2} + b_{2}z, \dotsc, a_{n-2} + b_{n-2}z)
	\end{align*}
	with respect to the basis $\{ e_{1}, \tau, e_{2}, \dotsc, e_{n-2}\}$. 	Using $e_{1}$ as the tilt direction and the argument 		above we get
	\[
		c(z) = (\bar{a}_{1} + \bar{b}_{1}z, \bar{a}_{2} + \bar{b}_{2}z, \dotsc, \bar{a}_{n-1} + \bar{b}_{n-1}z)
	\]
	with respect to the basis $\{ e_{1}, \tau, e_{2}, \dotsc, e_{n-2}\}$ and hence we can conclude that the curve $c \colon I \to 		\mathbb{R}^{n-1}$ is affine.
\end{proof}

\subsection{Main theorem}

\begin{proposition} [Local version] \label{PS5.4:lclvrs}
	A transversely convex tube with cop is either a cylinder over a central ovaloid or a quadric.
\end{proposition}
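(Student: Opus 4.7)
The plan is to chain together the three main results that precede this proposition: the classification of the rectification (Proposition \ref{PS5.2:classofrcttube}), the straightening of the central curve (Proposition \ref{PS5.3:axslemm}), and Solomon's rotationally symmetric case \cite{bS09}. The key observation is that both the hypothesis (\emph{cop}) and the conclusion (cylinder over a central ovaloid or quadric) are invariant under affine isomorphisms of $\mathbb{R}^{n}$, because ovaloids are preserved under such isomorphisms by Lemma \ref{LS3.2:Glnprsvovld}, central symmetry is an affine notion, and both cylinders and quadrics are closed under the affine group.

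The first step is to reduce to the rectification: given any transversely convex tube $\mathcal{T}$ with \emph{cop}, the Axis Lemma (Proposition \ref{PS5.3:axslemm}) produces an affine isomorphism of $\mathbb{R}^{n}$ carrying $\mathcal{T}$ to $\mathcal{T}^{-}$. By affine invariance of \emph{cop}, $\mathcal{T}^{-}$ is again a transversely convex tube with \emph{cop}, so it suffices to classify $\mathcal{T}^{-}$. Next, apply Proposition \ref{PS5.2:classofrcttube}, which gives a dichotomy: either $\mathcal{T}^{-}$ is a cylinder over a central ovaloid, in which case we are already done, or $\mathcal{T}^{-}$ is affinely isomorphic to a hypersurface of revolution.

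In the second branch, apply the affine isomorphism once more and reduce to the case that $\mathcal{T}^{-}$ itself is (part of) a hypersurface of revolution about, say, the $e_{n}$-axis. The \emph{cop} assumption guarantees that every sufficiently small tilt of a horizontal hyperplane still cuts $\mathcal{T}^{-}$ in a central ovaloid, and in particular every hyperplane nearly perpendicular to the axis of rotation meets $\mathcal{T}^{-}$ in a central set. This is exactly the hypothesis of Solomon's proposition in \cite{bS09}, whose conclusion is that the hypersurface of revolution must be a quadric. Reversing the affine isomorphisms, the original $\mathcal{T}$ is then a quadric, completing the classification.

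The main obstacle is not in the logic of this assembly but in verifying that the tube version of the hypothesis in \cite{bS09} really matches the statement in that paper, which is phrased for complete hypersurfaces of revolution: one must check that a local open piece of the revolution surface satisfying \emph{cop} on nearly perpendicular slices still forces the defining profile to satisfy the ODE whose solutions are quadric profiles, and that this local quadric structure extends to all of $\mathcal{T}^{-}$. Granted this (which is the content of \cite{bS09} once one reads it as a local statement about the profile curve), the local version follows immediately.
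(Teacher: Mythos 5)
Your proposal is correct and follows essentially the same route as the paper's proof: pass to standard position, use the Axis Lemma to reduce to the rectification, apply Proposition \ref{PS5.2:classofrcttube} to obtain the cylinder/revolution dichotomy, and invoke \cite{bS09} in the revolution case. The caveat you raise about matching the local tube to the hypotheses of \cite{bS09} is legitimate but is handled no more explicitly in the paper itself, which likewise cites \cite{bS09} directly at this step.
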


\begin{proof}
	Let $\mathcal{T}$ be a transversely convex tube with \emph{cop}. By definition $\mathcal{T}$ is affinely isomorphic to a 	transversely convex tube $\mathcal{T}_{0}$ in \emph{standard position}. As central ovaloids are mapped to central ovaloids by 		affine isomorphisms \eqref{LS3.2:Glnprsvovld}, $\mathcal{T}_{0}$ also has \emph{cop}. The central curve of $			\mathcal{T}_{0}$ is affine by Lemma \ref{PS5.3:axslemm} (Axis lemma) and hence $\mathcal{T}_{0}$ is affinely 			isomorphic to a rectified transversely convex tube $\mathcal{T}_{1}$ with \emph{cop} in \emph{standard position}. Using	 		Proposition \ref{PS5.2:classofrcttube} we can conclude that $\mathcal{T}_{1}$ is either a cylinder over a central ovaloid 		or affinely isomorphic to a hypersurface of revolution with \emph{cop}. As $\mathcal{T}$ is affinely isomorphic to $			\mathcal{T}_{1}$ we can conclude that $\mathcal{T}$ is either a cylinder over a central ovaloid or affinely isomorphic to a 	hypersurface of revolution with \emph{cop}. In the case when $\mathcal{T}$ is affinely isomorphic to a hypersurface of 		revolution with \emph{cop} we can use \cite{bS09} to conclude that $\mathcal{T}$ is affinely isomorphic to a quadric and 	hence it is itself a quadric.
\end{proof}

\begin{remark}
	Affine invariance of the problem means the following:
	
	For every affine isomorphism $G \colon \mathbb{R}^{n} \to \mathbb{R}^{n}$, $G \circ F$ is again a proper, complete, 		immersion with cop if $F$ has these properties and $(G \circ F)$ is again a cylinder or a quadric depending on whether 		$F(M)$ is a cylinder or a quadric.
\end{remark}

\begin{theorem}
	Let $M^{n-1}$ be a smooth, connected, manifold, and $F \colon M \to \mathbb{R}^{n}$, $n \geq 4$, be a proper, 			complete, immersion with cop, then $F(M)$ is either a cylinder over a central ovaloid or a quadric.
\end{theorem}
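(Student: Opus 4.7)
The plan is to globalize the local version (Proposition \ref{PS5.4:lclvrs}) by an open-and-closed connectedness argument, following the outline sketched in the Overview. First, \emph{cop} guarantees at least one cross-cut $\Sigma_{0} \subseteq M$ whose image $F(\Sigma_{0})$ is an ovaloid in some hyperplane $H = H_{u, \alpha}$. By Lemma \ref{LS3.3:exttubnbd} and Lemma \ref{LS3.3:exsttrncvxtube}, this gives a neighborhood $U_{a_{0}} \subseteq M$ of $\Sigma_{0}$ which $F$ embeds onto a transversely convex tube $\mathcal{T}_{a_{0}}$ about $F(\Sigma_{0})$, and $\mathcal{T}_{a_{0}}$ inherits \emph{cop} from $F$. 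Proposition \ref{PS5.4:lclvrs} then forces $\mathcal{T}_{a_{0}}$ to lie on either a cylinder $\mathcal{C}$ over a central ovaloid or a quadric $\mathcal{Q}$; call this ambient model $\mathcal{S}$ and fix it once and for all.

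Next I would run a maximal-extension argument to show that $F(M) = \mathcal{S}$. Let $\mathcal{A} \subseteq M$ be the union of all open sets $U \supseteq \Sigma_{0}$ such that $F|_{U}$ is an embedding onto a transversely convex tube in $\mathcal{S}$ with \emph{cop}, foliated by cross-cuts of $F$ relative to parallel hyperplanes $H_{u,s}$; equivalently, $U$ is a neighborhood on which $u^{*} \circ F$ has no critical point and whose image is a piece of $\mathcal{S}$. By construction $\mathcal{A}$ is open and nonempty. The key point is that $\mathcal{A}$ is also closed in $M$: if $p \in \overline{\mathcal{A}} \setminus \mathcal{A}$, then $p$ would sit over a ``boundary'' ovaloid of the extending tube, and Lemma \ref{LS3.3:exttrncvxtube} (Extension of transversely convex tube) applied at that boundary produces a strictly larger transversely convex tube with \emph{cop}; Proposition \ref{PS5.4:lclvrs} forces this new tube to lie on a cylinder or quadric, and the overlap with $\mathcal{T}_{a_{0}}$ (which contains a full ovaloid cross-cut and hence a Zariski-dense subset of $\mathcal{S}$) forces the two models to coincide. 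So $\mathcal{A}$ is clopen; by connectedness of $M$ we get $\mathcal{A} = M$, and $F(M) \subseteq \mathcal{S}$.

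Properness plus completeness of $F$ prevents $F(M)$ from being a proper open subset of $\mathcal{S}$: the extension process can only halt when the levels $u^{*} \circ F$ develop a critical point or fail to remain in the tubular regime. If $\mathcal{S} = \mathcal{C}$ is a cylinder, the height function has no critical points on $\mathcal{C}$, and properness of $F$ guarantees that the ovaloid cross-cuts $F^{-1}(H_{u,s}) \cap M$ remain compact as $s$ varies, so extension never halts and $F(M) = \mathcal{C}$. If $\mathcal{S} = \mathcal{Q}$ is a quadric, the same argument works away from critical levels of $u^{*}$ on $\mathcal{Q}$; at a critical level $\mathcal{Q}$ either closes up (as on an ellipsoid) or escapes to infinity along another axis (hyperboloid/paraboloid), and either way the properness of $F$, together with completeness of the immersion, forces $F(M)$ to exhaust the component of $\mathcal{Q}$ containing $F(\Sigma_{0})$ and hence to equal $\mathcal{Q}$ (up to the components of $\mathcal{Q}$, which in any case each qualify individually as ``a quadric'').

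The routine content is the open-closed bookkeeping and the matching of two affine hypersurfaces agreeing on an ovaloid; the genuine obstacle is verifying that extension cannot stall in the quadric case without $F(M)$ having already filled out $\mathcal{Q}$. Concretely, the worry is that a sequence $p_{k} \in M$ with $F(p_{k})$ approaching a singular or ``waist'' level of $\mathcal{Q}$ could accumulate without a limit in $M$; properness of $F$ rules out the former (preimages of compact sets are compact) and completeness of the immersion rules out the latter (so a limit exists in $M$ and the embedding extends across it by Lemma \ref{LS3.3:exttrncvxtube}). Handling this carefully, component by component of $\mathcal{Q}$, is the crux that distinguishes the cylindrical and quadric cases in the final conclusion.
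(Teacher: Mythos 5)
Your overall strategy (seed tube from a cross-cut, apply the local version, then extend maximally using the extension lemma and connectedness) is the same as the paper's, but as written the argument has two genuine gaps.

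First, the clopen argument cannot work in the form you state it. Your set $\mathcal{A}$ consists of points lying in neighborhoods on which $u^{*}\circ F$ has no critical point and which $F$ embeds as tubes. If the ambient model $\mathcal{S}$ turns out to be an ellipsoid, a paraboloid, or a convex hyperboloid, then $u^{*}$ has critical points on $\mathcal{S}$ (the poles or the vertex), the preimages of those points can never belong to $\mathcal{A}$, and so $\mathcal{A}$ is \emph{not} closed and cannot equal $M$. The paper instead takes the maximal height interval $(A,B)$ on which the tube condition holds and splits into cases according to whether $A$ and $B$ are finite. When extension stalls at a finite height, the conclusion is not ``$\mathcal{A}=M$'' but rather that $e_{n}^{*}\circ F$ must have a critical point on the corresponding boundary set; running this at both ends rules out the cylinder and all non-compact quadrics of revolution and forces $F(M)$ to be the \emph{closure} of the tube, i.e.\ a complete ellipsoid. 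Your third paragraph gestures at this (``closes up \dots or escapes to infinity'') but never carries out the classification of which quadrics of revolution are compatible with which finiteness pattern of $(A,B)$, and that classification is what actually produces the final list of possibilities.

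Second, the closedness step you do need --- that when extension stalls at a finite height \emph{without} a critical point appearing, the boundary is a genuine cross-cut to which Lemma \ref{LS3.3:exttrncvxtube} applies --- is asserted rather than proved. A priori the set $\Gamma_{B}$ of limit points of $F^{-1}(p_{k})$ with $e_{n}^{*}(p_{k})\to B^{-}$ need not be a compact embedded $(n-2)$-manifold: one must use completeness of the immersion to show the limits exist in $M$, properness to show $\Gamma_{B}$ is compact, and then a flow-box argument for the normalized gradient of $e_{n}^{*}\circ F$ to show $\Gamma_{B}$ is the diffeomorphic image of a nearby cross-cut $\Gamma_{c}$ (including the non-obvious surjectivity $\Gamma_{0}=\phi_{-c}(\Gamma_{c})$). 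This occupies most of the paper's Case 1 and is exactly the point where ``properness plus completeness prevents stalling'' has to be converted into mathematics; identifying it as ``the crux'' in your last paragraph does not discharge it.
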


\begin{proof}
	According to the \emph{cop} Definition \ref{DS1.1:cop}, there exists a cross-cut $\Gamma \subseteq F^{-1}(H)$ so that 		$F(\Gamma)$ is an ovaloid. By of the affine invariance of the problem we can assume that $H = H_{e_{n}, 0}$ and use  		Lemma \ref{LS3.3:exsttrncvxtube} (Existence of transversely convex tube) to construct a transversely convex tube with 		\emph{cop} about $F(\Gamma)$ with the following properties:
	
	\begin{figure}[h]
		\centering
		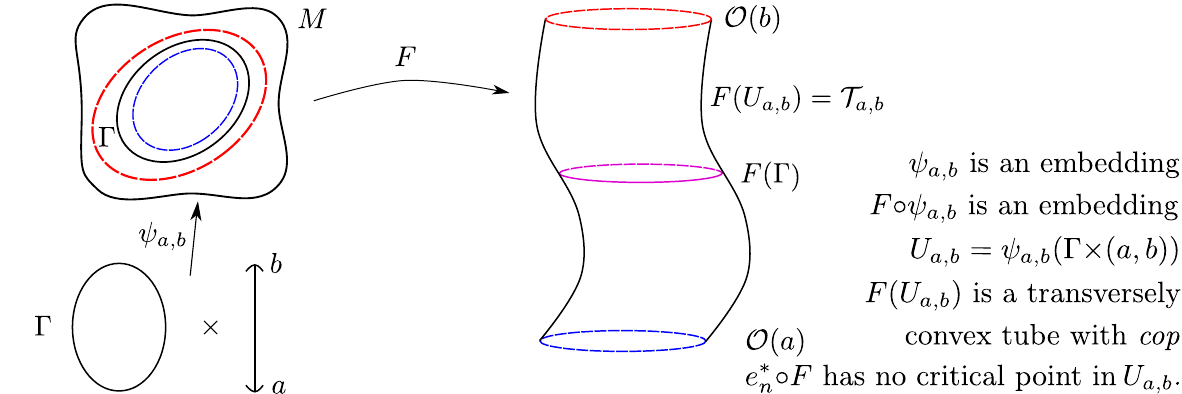
		\caption{Transversely convex tube about $F(\Gamma)$}
	\end{figure}

	There consequently exist minimal and maximal heights $-\infty \leq A < 0 < B \leq \infty$ so that 
	\begin{enumerate}
		\item $e_{n}^{*} \circ F$ has no critical point in $U_{a, b}$ \label{S5.4:cnd1}
		\item $F$ embeds $U_{a, b}$ into $\mathbb{R}^{n}$ as a transversely convex tube with \emph{cop} 					\label{S5.4:cdn2}
	\end{enumerate}
	for each $A < a < 0 < b < B$. 
	
	Now we consider three cases according to whether both, neither, or exactly one of the endpoints $A$ and $B$ are finite.
	
	\textbf{Case} $\mathbf{1 \colon (-\infty < A < B < \infty)}$ In this case, $e_{n}^{*} \circ F$ has no critical point in 			$U_{A, B}$, $\psi_{A, B}$ is an embedding and $F$ embeds $U_{A, B} = \psi( \Gamma \times (A, B))$ into $			\mathbb{R}^{n}$ as a transversely convex tube $\mathcal{T}_{A, B}$ with \emph{cop}. Therefore, using Proposition 		\ref{PS5.4:lclvrs} (Local version), we can conclude that $\mathcal{T}_{A, B}$ is either a cylinder 	over a central ovaloid, 	or a quadric, and because of the affine invariance of the problem we can further assume that $\mathbb{R}e_{n}$ is the axis 	of $\mathcal{T}_{A, B}$. Assume first that $\mathcal{T}_{A, B}$ is a cylinder. Our aim is to show that this is not possible 	by producing a contradiction to the choice of maximal and minimal heights.
	
	Since $\psi_{A, B}$ is an embedding and $\dim \bigl[ \Gamma \times (A, B) \bigr] = \dim M$, using the inverse function 		theorem we can conclude that it is a diffeomorphism onto its image, which is an open subset of $M$. Define the set $			\Gamma_{B}$ as 
	\[
		\Gamma_{B} = \Bigl\{ \lim_{k \to \infty} F^{-1}(p_{k}) \colon \{ p_{k}\}_{k \in \mathbb{N}} \subseteq 				\mathcal{T}_{A, B} \text{ Cauchy sequence}, \, e_{n}^{*}(p_{k}) \to B^{-} \Bigr\}.
	\]
	We eventually want to show that $\Gamma_{B}$ is a submanifold. But we first argue that the set $\Gamma_{B}$ exists, 		is compact and belongs to the boundary of $\psi_{A, B}(\Gamma \times (A, B))$. 
	
	\noindent
	\textbullet \quad The limit $\lim_{k} F^{-1}(p_{k})$ exists: \newline
	 \phantom{\textbullet \quad}Fix a Cauchy sequence $\{p_{k}\}_{k \in \mathbb{N}} \subseteq \mathcal{T}_{A, B}$ with 		$e_{n}^{*}(p_{k}) \to B^{-}$, $\epsilon > 0$, then there exist $I \in \mathbb{N}$, $\alpha_{ij} \colon J_{ij} \to 			\mathcal{T}_{A, B}$ smooth curve connecting $p_{i}$ to $p_{j}$ so that for each $i$ and $j \geq I$
	 \begin{align*}
	 	&\int_{J_{ij}} \lvert \dot{\alpha}_{ij}(t) \rvert \, \derv \mathcal{L}^{1}t < \epsilon \text{ and define for each $i$, $j 			\in \mathbb{N}$ the curve}\\
		&\beta_{ij} \colon J_{ij} \to M \quad \beta_{ij}(t) := F^{-1} \circ \alpha_{ij}(t).
	 \end{align*}
	Then the distance between the points $F^{-1}(p_{i})$ and $F^{-1}(p_{j})$ has a bound
	\[
		\dist_{M} \bigl[ F^{-1}(p_{i}), F^{-1}(p_{j}) \bigr] \leq \mathrm{length} \, \beta_{ij} = \mathrm{length} \, \alpha_{ij} 		< \epsilon
	\]	
	whenever $i$, $j \geq I$ because $M$ has the pullback metric. Therefore, $\{F^{-1}(p_{k})\}_{k \in \mathbb{N}}$ is a 		Cauchy sequence and the limit $\lim_{k} F^{-1}(p_{k})$ exists because $(M, \dist_{M})$ is a complete metric space, which 	follows because $F$ is a complete immersion.
	
	\noindent
	\textbullet \quad $\Gamma_{B} = \mathrm{Clos} \, \bigl[ \psi_{A, B} (\Gamma \times (A, B)) \bigr] \cap F^{-1} 			\bigl[ \mathcal{O}(B) \bigr]$ and it is compact. \newline
	\phantom{\textbullet \quad}Take $x \in \Gamma_{B}$, then $x = \lim_{k} F^{-1}(p_{k})$ and hence $x \in 				\mathrm{Clos} \, \bigl[ \psi_{A, B} (\Gamma \times (A, B)) \bigr]$, $F(x) = \lim_{k} p_{k}$, $e_{n}^{*} \bigl[ F(x) 		\bigr] = B$. This implies that $F(x) \in \mathcal{O}(B)$ and we get one inclusion. On the other hand, if we take $F(x) \in 		\mathcal{O}(B)$, $x = \lim_{k} x_{k}$ with $x_{k} = \psi_{A, B}(x_{k}', h_{k})$, $x_{k}' \in \Gamma$, $h_{k} \to 		B^{-}$, then $p_{k} = F \circ \psi_{A, B}(x_{k}', h_{k})$ is a Cauchy sequence, $e_{n}^{*}(p_{k}) = h_{k} \to B^{-}$ 		and $F^{-1}(p_{k}) = x_{k} \to x \in \Gamma_{B}$. Therefore, the equality holds and the set $\Gamma_{B}$ is compact 		because $F$ is a proper map. Moreover the definition of $\Gamma_{B}$ implies that $F(\Gamma_{B}) = \mathcal{O}(B)		$. A similar argument shows that
	\[
		\Gamma_{A} = \mathrm{Clos} \, \bigl[ \psi_{A, B} (\Gamma \times (A, B)) \bigr] \cap F^{-1} \bigl[ \mathcal{O}(A) 		\bigr]
	\]
	is compact and we can conclude that
	\begin{align*}
		\bdry \psi_{A, B} (\Gamma \times (A, B)) &= \mathrm{Clos} \, \bigl[ \psi_{A, B}(\Gamma \times (A, B)) \bigr]  											    \setminus \psi_{A, B}(\Gamma \times (A, B))\\
									      &= \Gamma_{A} \cup \Gamma_{B}.
	\end{align*}
	
	Now we want to show that $\Gamma_{B}$ is a submanifold of $M$ of dimension $n-2$ and conclude that $\Gamma_{B}$ 	is a cross-cut of $F$ relative to $H_{e_{n}, B}$. This conclusion, together with  Lemma \ref{LS3.3:exttrncvxtube} 			(Extension of transversely convex tube) will contradict the maximality of $B$. Because of the affine invariance of the 		problem we can assume that $B = 0$, $A < 0$ and denote the map $\psi_{A, 0}$ as $\psi$.
	
	Note that because $\mathcal{T}_{A,0}$ is a cylinder, for every $x \in \Gamma_{0}$
	\begin{equation*}
		\tng_{F(x)}H_{e_{n},0} + \derv F_{x} (\tng_{x} M) = \mathbb{R}^{n} \text{ and hence} 
		\begin{pmatrix}
			e_{n} \cdot \frac{\partial F}{\partial x_{1}}(x)\\
			\vdots\\
			e_{n} \cdot \frac{\partial F}{\partial x_{n-1}}(x) 
		\end{pmatrix}	
												  \ne \begin{pmatrix}
														0\\
														\vdots\\
														0
													\end{pmatrix}
	\end{equation*}
	hence $\nabla (e_{n}^{*} \circ F) \, \bigr\rvert_{x} \ne 0$. Therefore, we can conclude that $\nabla e_{n}^{*} \circ F$ does 	not vanish on the compact set $\Gamma_{0} = \psi(\Gamma \times \{0\})$ and we can define the vectorfield
	\[
		Y = \frac{\nabla e_{n}^{*} \circ F}{\lvert \nabla e_{n}^{*} \circ F \rvert^{2}}
	\]
	in an open neighborhood $V_{0}$ of $\Gamma_{0}$. Using the existence and uniqueness of flow box for $Y$ we obtain a 	triple $(V_{1}, a, \phi)$, where
	\begin{enumerate}
		\item $V_{1} \subseteq V_{0}$ is an open neighborhood of $\Gamma_{0}$ in $M$\\
		\item $\phi \colon (-a, a) \times V_{1} \to V_{0} \subseteq M$ is smooth\\
		\item For each $q \in V_{1}$, $t \to c_{q}(t) = \phi(t, q)$ is the integral curve of $Y$ and $c_{q}(0) = q$\\
		\item For each $\lvert t \rvert < a$, $\phi_{t} \colon V_{1} \to V_{0} \subseteq M$ is a diffeomorphism onto its 			image. 
	\end{enumerate}
	
	For each $q \in V_{1}$ the curve $c_{q}$ satisfies
	\begin{equation} \label{ES5.4:crvnolclext}
		\frac{\derv}{\derv t} (e_{n}^{*} \circ F \circ c_{q})(t) = \derv \, (e_{n}^{*} \circ F) \bigl[ c_{q}(t) \bigr] Y(c_{q}(t)) 		= 1
	\end{equation}
	for each $\lvert t \rvert < a$. Therefore, for each $q \in V_{1}$ the height function $e_{n}^{*} \circ F \circ c_{q}$ has no 		local maximum or minimum in $(-a, a)$.
	
	\begin{figure}[h]
		\centering
		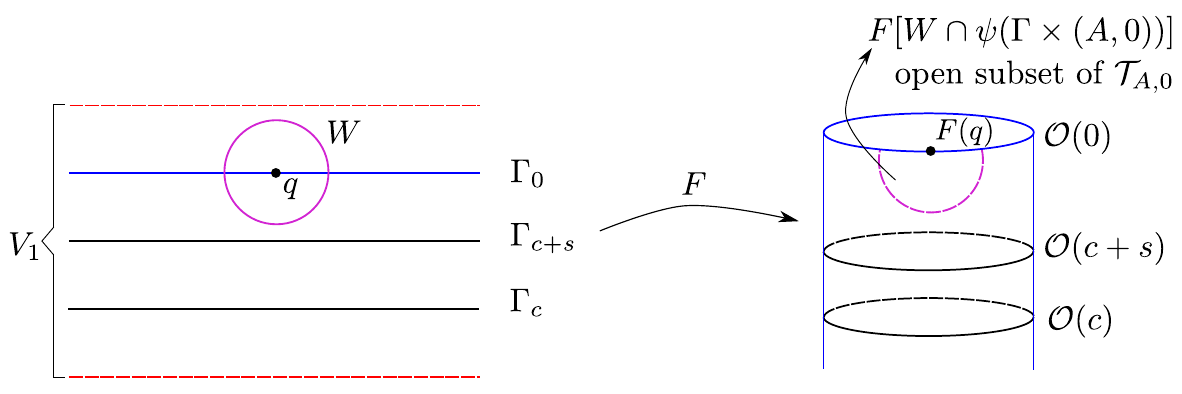
		\caption{Foliation by cross-cuts $\{\Gamma_{t}\}_{c \leq t \leq 0}$}
	\end{figure}
	
	Choose $c \in \mathbb{R}$ so that $-a < c < 0$, $A < c$, and $-c  = \lvert c \rvert < \dist_{M}(\Gamma_{0}, M \setminus 		V_{1})$. Define the submanifold $\Gamma_{c}$ of $M$, which is diffeomorphic to $\mathcal{O}(c)$ as 
	\[
		\Gamma_{c} = \bigl[ F \mid \psi(\Gamma \times (A, 0)) \bigr]^{-1} \mathcal{O}(c).
	\]
	Its $\dist_{M}$-distance to $\Gamma_{0}$ satisfies
	\[
		\dist_{M}( \Gamma_{0}, \Gamma_{c}) \leq \dist \bigl[ \mathcal{O}(0), \mathcal{O}(c) \bigr] = \lvert c \rvert < 			\dist_{M}(\Gamma_{0}, M \setminus V_{1})
	\]
	and hence we can conclude that $\Gamma_{c} \subseteq V_{1}$. Similarly for every $0 \leq s < \lvert c \rvert$ the 			submanifold $\Gamma_{c+s}$ satisfies $\Gamma_{c+s} \subseteq V_{1} \cap \psi(\Gamma \times (A, 0))$.
	
	\noindent
	\textbullet \quad For each $0 \leq s < \lvert c \rvert$, the diffeomorphism $\phi_{s}$ maps $\Gamma_{c}$ onto $			\Gamma_{c +s}$. First note that $\phi_{-s} = \phi^{-1}_{s}$ and hence it suffices to show that $\phi_{s}(\Gamma_{c}) 		\subseteq \Gamma_{c+s}$. Let $x \in \Gamma_{c}$ be arbitrary, then 
	
	\begin{align*}
		\phi_{s}(x) \in \Gamma_{c+s} &\iff \phi_{s}(x) \in \bigl[ F \mid \psi(\Gamma \times (A, 0)) \bigr]^{-1} \mathcal{O}			(c + s)\\
		&\iff \phi_{s}(x) \in \psi(\Gamma \times (A, 0)) \text{ and } (F \circ \phi_{s})(x) \in \mathcal{O}(c + s)\\
		&\iff \phi_{s}(x) \in \psi( \Gamma \times (A, 0)).
	\end{align*}
	The last equivalence is true because $\phi_{s}(x) \in \psi(\Gamma \times (A, 0))$ implies that its $F$ image $(F \circ 			\phi_{s})(x) \in \mathcal{T}_{A, 0}$ and $e_{n}^{*} \circ F \circ \phi_{s}(x) = c+s$ yields $\phi_{s}(x) \in \Gamma_{c		+s}$. By definition $\phi_{s}(x) \in \mathrm{im} \, \bigl( c_{x}\,  | \, [0, s] \bigr)$, and if this image is not a subset of $		\psi(\Gamma \times (A, 0))$ then there must exist $0 < s_{0} < s$ so that $c_{x}(s_{0}) \in \Gamma_{0} \cup 				\Gamma_{A}$, which implies the existence of a local extremum in $(0, s)$. This contradicts the observation in 
	\eqref{ES5.4:crvnolclext} and hence we can conclude that $\phi_{s}(x) \in \Gamma_{c+s}$.
	
	\noindent
	\textbullet \quad Assume that there exists $q \in \Gamma_{0} \setminus \phi_{-c}(\Gamma_{c})$, then there is an open 		neighborhood $W$ of $q$ with $\dist_{M}\bigl[ W, \phi_{-c}(\Gamma_{c}) \bigr] > 0$. Choose $0  < \epsilon < \lvert c 		\rvert$ so that for each $\lvert s \rvert < \epsilon$ $\dist_{M} \bigl[ W, \phi_{-c+s}(\Gamma_{c}) \bigr] > 0$. Then for 		each $- \epsilon < s < 0$
	\begin{align*}
		\bigl[\,  W \cap \psi( \Gamma \times (A, 0) ) \, \bigr] \cap \phi_{-c+s}(\Gamma_{c})  &= \emptyset\\
		\bigl[\, W \cap \psi( \Gamma \times (A, 0) \,\bigr] \cap \Gamma_{s} 			  &= \emptyset					\end{align*}
	and $F \bigl[ W \cap \psi( \Gamma \times (A, 0)) \bigr] \cap F(\Gamma_{s}) = \emptyset$ because $F \, | \, \psi(\Gamma 		\times (A, 0))$ is a bijection onto its image. Therefore, we must have for each $- \epsilon < s < 0$
	\[
		F \bigl[ W \cap \psi( \Gamma \times (A, 0)) \bigr] \cap \mathcal{O}(s) = \emptyset
	\]
	which is not possible. This implies that $\Gamma_{0} \subseteq \phi_{-c}(\Gamma_{c})$.
	
	\noindent
	\textbullet \quad Let $x \in \Gamma_{c}$ be arbitrary and consider the limit
	\begin{align*}
		\phi_{-c}(x) &= \lim_{k \to \infty} \phi_{-c - 1/k}(x) = \lim_{k \to \infty} \phi(-c - 1/k, x)\\
				   &= \lim_{k \to \infty} \psi(x_{k}, - 1/k)
	\end{align*}
	for some sequence $\{x_{k}\}_{k \in \mathbb{N}}$ in $\Gamma$ and hence we have
	\[
		\phi_{-c}(x) \in \mathrm{Clos} \bigl[\,  \psi(\Gamma \times (A, 0)) \,\bigr] \cap F^{-1} \bigl[ \mathcal{O}(0) \bigr] = 		\Gamma_{0}.
	\]
	So we get $\Gamma_{0} = \phi_{-c}(\Gamma_{c})$ and since $\phi_{-c}$ is a diffeomorphism and $\Gamma_{c}$ is an 		$n-2$ dimensional compact connected submanifold of $M$ we can conclude that $\Gamma_{0}$ is also an $n-2$ 			dimensional compact connected submanifold of  $M$. Therefore, using  Lemma \ref{LS3.3:exttubnbd} (Tubular 			neighborhood) we can conclude that $\Gamma_{0}$ is actually a cross-cut and Lemma \ref{LS3.3:exsttrncvxtube} 			(Existence of transversely convex tube) shows that there exists a transversely convex tube about $F(\Gamma_{0}) = 			\mathcal{O}(0)$. Finally, Lemma \ref{LS3.3:exttrncvxtube} (Extension of transversely convex tube) implies that tube about $	\mathcal{O}(0)$ extends $\mathcal{T}_{A, 0}$, contradicting the maximality of $B =0$.
	
	It follows that the transversely convex tube $\mathcal{T}_{A,B}$ is a quadric hypersurface of revolution about the $e_{n}		$-axis. Therefore there exist a vertical dilation $f_{1}$ and a vertical translation $f_{2}$ such that the affine isomorphism 		$f = f_{1} \circ f_{2}$ satisfies 
	\[
		f(\mathcal{T}_{A, B}) = \bigl\{ (x_{1}, \dotsc, x_{n}) \in \mathbb{R}^{n} \colon x_{1}^{2} + \cdots + x_{n-1}			^{2} \pm x_{n}^{2} = c \bigr\}
	\]
	for some $c \in \mathbb{R}$. Since $\mathcal{T}_{A, B}$ intersects some horizontal hyperplane along a compact set so 		does $f(\mathcal{T}_{A, B})$. After we remove the cylindrical case we can conclude that $f(\mathcal{T}_{A, B})$ is one 	of the following quadrics:
	\begin{description}
		\item[Nondegenerate] sphere, tube hyperboloid, convex hyperboloid, cone;
		\item[Degenerate] paraboloid.
	\end{description}
	
	Therefore, $\mathcal{T}_{A, B}$ is affinely isomorphic to one of the quadrics above. Discarding the cone, on all these 		hypersurfaces, horizontal cross-sections are spheres and hence the maximality and minimality of $B$ and $A$, respectively, 	must be dictated by the condition \eqref{S5.4:cnd1} alone. This implies that $e_{n}^{*} \circ F$ must have critical points 		on both boundaries of the open set $\psi_{A, B}(\Gamma \times (A, B))$. Namely, there exist $x \in \Gamma_{A}$, $y \in 	\Gamma_{B}$ so that
	\begin{align*}
		\nabla e_{n}^{*} \circ F \, \bigr\rvert_{x} = 0 \quad  &\Rightarrow e_{n} \cdot \frac{\partial F}{\partial x_{i}}(x) = 0 		\quad i = 1, \dotsc, n-1\\
		\nabla e_{n}^{*} \circ F \, \bigr\rvert_{y} = 0 \quad &\Rightarrow e_{n} \cdot \frac{\partial F}{\partial x_{i}}(y) = 0 		\quad i = 1, \dotsc, n-1.	
	\end{align*}
	
	But among the quadrics listed above, $e_{n}^{*}$ has multiple critical points only on the sphere, where it attains both a 		maximum and a minimum. So we can conclude that $F \bigl[ \mathrm{Clos} \, \psi_{A, B}(\Gamma \times (A, B)) \bigr]$ 		equals the complete ellipsoid $\mathrm{Clos} \, \mathcal{T}_{A, B}$. Since $M$ is connected we must have $F(M) = 		\mathrm{Clos} \, \mathcal{T}_{A, B}$.
	
	\textbf{Case} $\mathbf{2 \colon (- A = B = \infty)}$ In this case $\psi \colon \Gamma \times \mathbb{R} \to M$ is an 		 embedding and since $\dim (\Gamma \times \mathbb{R}) = \dim M$ the map $\psi$ must be a diffeomorphism onto 			its image, which is both open and closed in $M$. Since $M$ is connected $\psi(\Gamma \times \mathbb{R}) = M$.
	\begin{align*}
		F(M) = F \bigl[ \psi(\Gamma \times \mathbb{R}) \bigr] &= \bigcup_{r > 0} F \bigl[ \psi(\Gamma \times (-r, r)) \bigr]\\
								  			         &= \bigcup_{r > 0} F(U_{-r, r})
	\end{align*}
	where $U_{-r, r} = \psi (\Gamma \times (-r, r))$ for each $r > 0$.
	
	For each $r > 0$, $F(U_{-r, r})$ is a transversely convex tube with \emph{cop}. Using  Proposition \ref{PS5.4:lclvrs} 		(Local version) we can conclude that $F(U_{-r, r})$ is either a cylinder over a cental ovaloid with axis $\mathbb{R} e_{n}$ 	or a quadric hypersurface of revolution about $\mathbb{R}e_{n}$. Therefore, $F(U_{-r, r})$ must either be a cylinder over 	a central ovaloid or a tube hyperboloid for each $r > 0$.
	
	Assume that there exists $r_{0} > 0$ and $F(U_{-r_{0}, r_{0}})$ is a cylinder over a central ovaloid, then for every $r > 		r_{0}$
	\[
		F(U_{-r, r}) \cap \{ \lvert e_{n}^{*} \rvert < r_{0}\} = F(U_{-r_{0}, r_{0}})
	\]
	and hence $F(U_{-r, r})$ is a cylinder over a central ovaloid. So we can conclude that
	\[
		F(M) = \bigcup_{r>0} F(U_{-r, r})
	\]
	is a cylinder over a central ovaloid. Similarly, if there exists $r_{0} > 0$ so that $F(U_{-r_{0}, r_{0}})$ is a tube 			hyperboloid, then $F(M)$  is a again a tube hyperboloid.
	
	\textbf{Case} $\mathbf{3 \colon (\lvert A \rvert < B = \infty \text{ or } B < \lvert A \rvert  = \infty})$ Since the reflection 		about $H_{e_{n}, 0}$ is an affine isomorphism, these two cases are equivalent. So we assume $\lvert A \rvert < B = \infty$. 	For each $b > 0$
	\begin{align*}
		&\psi_{A, b} \colon \Gamma \times (A, b) \to M \text{ is an embedding}\\
		&U_{A, b} = \psi_{A, b} \bigl[ \Gamma \times (A, b) \bigr] \text{ is an open subset of $M$ and}\\
		&F(U_{A, b}) \text{ is a transversely convex tube with \emph{cop}.}
	\end{align*}
	
	$F(U_{A, b})$ can not be a cylinder over a central ovaloid because we can get, as in \textbf{Case 1}, a contradiction to the 	minimality of $A$. So we can conclude that for each $b > 0$, $F(U_{A, b})$ is either a spherical paraboloid or a convex 		hyperboloid. Assume that there exists $b_{0} > 0$ so that $F(U_{A, b_{0}})$ is a spherical paraboloid, then for each $b > 		b_{0}$, $F(U_{A, b})$ is again a spherical paraboloid and 
	\[
	 F \biggl( \mathrm{Clos} \bigcup_{b > 0} U_{A, b}\biggr)
	\]
	is a complete paraboloid. Since $M$ is connected, $F(M) = F(\mathrm{Clos} \cup_{b > 0} U_{A, b})$ and $F(M)$ is a 		complete paraboloid. Similarly, if there exists $b_{0} > 0$ so that $F(U_{A, b_{0}})$ is a convex hyperboloid, then $F(M)	$ is a convex hyperboloid.
\end{proof}

\newpage

\end{document}